\documentclass{cocv}
\usepackage{color}
\usepackage{amsmath}
\usepackage{amssymb}
\usepackage{algorithmic}
\usepackage{algorithm,algorithmic}
\usepackage{graphicx}   % µ¼ÈëͼƬ
\usepackage{epstopdf}
\usepackage{graphics}
\usepackage{subfigure}
\graphicspath{{eps/}}
\usepackage{lineno,hyperref}
\usepackage{tikz}
\usetikzlibrary{shapes,arrows,chains,backgrounds,fit,decorations.pathreplacing}
\usepackage[europeanresistors,americaninductors]{circuitikz}
\usetikzlibrary{shapes,arrows}

\newenvironment{pf2}{{\it Proof  of Theorem~\Rref{original main result}: \enspace}}{\hfill $\square$\par}

\newenvironment{pf5}{{\it Proof  of Theorem~\Rref{solution exsistence} : \enspace}}{\hfill $\square$\par}

  \allowdisplaybreaks[4]

\begin{document}
	%%-----------------------------
	%%      the top matter
	%%-----------------------------
	\title{Input-to-state stabilization  of an   ODE cascaded with a parabolic equation  involving Dirichlet-Robin boundary disturbances}%\thanks{This work is supported in part by the NSFC under grant 11901482 and in part by the NSERC under grant RGPIN-2018-04571.}% At most 5 thanks
	\author{Yongchun Bi}\address{State Key Laboratory of Rail
		Transit Vehicle System, Southwest Jiaotong University,
		Chengdu 611756, Sichuan, China}\sameaddress{,2}
	\author{Jun Zheng}\address{School of Mathematics, Southwest Jiaotong University,
		Chengdu 611756, Sichuan, China}\sameaddress{,3}
	\author{Guchuan Zhu}\address{Department of Electrical Engineering, Polytechnique Montr\'{e}al, P. O. Box 6079, Station Centre-Ville, Montreal, QC, Cannada H3T 1J4}\thanks{{Email addresses: yongchun.bi@my.swjtu.edu.cn $(\mathrm{Yongchun}$ $\mathrm{Bi}^{1,2})$, zhengjun2014@aliyun.com $(\mathrm{Jun}$ $\mathrm{Zheng}^{2,3})$, guchuan.zhu@polymtl.ca $(\mathrm{Guchuan}$ $\mathrm{Zhu}^3)$}}\thanks{{Corresponding author: zhengjun2014@aliyun.com}}

	\
	%
%	\date{January 2025}
	%
	\begin{abstract}
		%±¾ÎÄ¿¼ÂǾßÓÐDirichletÈŶ¯µÄcascaded ODE with parabolic PDEµÄÊäÈë״̬Õò¶¨ÎÊÌâ, ÆäÖÐPDEµÄ×ó¶ËλÒƽøÈëODE£¬±ß½ç·´À¡¿ØÖÆ×÷ÓÃÓÚÅ×Îï·½³Ì²¿·Ö£¬²¢Í¨¹ý±ß½çÊäÈë¼ä½ÓÓ°Ïì΢·Ö·½³Ì¶¯Á¦Ñ§. A boundary feedback control law is constructed by backstepping. Input-to-state stability of the closed loop is achieved by generalized Lyapunov method under the control law.¡¡ÀûÓÃËã×Ó°ëȺÀíÂ۵õ½±Õ»·ÏµÍ³½âµÄ´æÔÚÐÔ. ×îºó, ͨ¹ýÊýֵģÄâÑéÖ¤Á˱߽ç¿ØÖÆ·½·¨µÄÓÐЧÐÔ¡£
		This paper focuses on the input-to-state stabilization problem for an ordinary differential equation (ODE)  cascaded by  parabolic partial differential equation (PDE) in the presence of Dirichlet-Robin boundary  disturbances, as well as in-domain disturbances.  For the cascaded system with a Dirichlet pointwise interconnection, the ODE takes the value of a Robin boundary condition at the ODE-PDE interface as its direct input, and the PDE is driven by a Dirichlet boundary input at the opposite end.
		%For the cascaded system,  the PDE subsystem satisfies a Robin boundary condition at the ODE-PDE interface,   whose value   acts directly as the input to the ODE,  while the PDE is driven by a Dirichlet boundary input at the opposite end.
		We first employ the backstepping method to design a boundary controller and to decouple the cascaded system. This decoupling facilitates independent stability analysis of the PDE and ODE systems sequentially. Then, to address the challenges posed by Dirichlet boundary disturbances to the application of the classical Lyapunov method, we utilize the generalized Lyapunov method to establish the ISS in the max-norm for the cascaded system involving Dirichlet boundary disturbances and two other types of   disturbances.
		%DirichletÐÍ»¥ÁªÔ¼ÊøPDE״̬the left endpoint value of the PDE solution is fed into the ODE while the value of the ODE solution is   fed into the Dirichlet boundary controller, which is designed by using   the backstepping method.
		%   Taking into account the complexity of the considered system,  we decouple the target system when using  backstepping. This decoupling facilitates independent stability analyses of the PDE and ODE systems sequentially.
		%in the presence of Dirichlet-Robin boundary disturbances and in-domain disturbances.
		% Regarding stability analysis of the target PDE system, to address the challenges posed by Dirichlet boundary disturbances to the application of the classical Lyapunov method, we utilize the generalized Lyapunov method to establish the ISS in the max-norm for the target system involving Dirichlet boundary disturbances and two other types of   disturbances.
		The obtained result  indicates that even in the presence of   different types of disturbances, ISS analysis can still be conducted within the framework of Lyapunov stability theory.  %Notably, the presence of endpoint term of PDE's solution in the ODE system requires establishing a stability estimate for the PDE system in the sense of the max-norm. This step is crucial for addressing the endpoint term  in the ODE system and subsequently establishing the  ISS in the max-norm for the original cascaded system.
		For the {well-posedness} of the target system, it is conducted by using the technique of lifting and the semigroup method. Finally, numerical simulations are conducted to illustrate the effectiveness of the proposed  control scheme and ISS properties for a cascaded system with different disturbances. %\textcolor{red}{(to be simplified)}

		%For the coupled  system, the left-end displacement of the PDE enters the ODE. The boundary feedback control acts on the parabolic equation part and indirectly influences the dynamics of the differential equation through the boundary input.   A boundary feedback control law is constructed by backstepping and the well-posedness of the solutions of the closed-loop system in the space of continuous functions is obtained by the operator semigroup theory in this paper. Subsequently, the input-to-state stability (ISS) of the closed-loop system is achieved via the generalized Lyapunov method under the control law.  Finally, the effectiveness of the boundary control method is verified through numerical simulations.
	\end{abstract}
	%
	%\begin{resume} ... \end{resume}
	%
	\subjclass{93C20, 93D05, 93D15}
	\keywords{cascaded ODE-PDE system, input-to-state stability, generalized Lyapunov method, Dirichlet boundary disturbance, backstepping.}
	\maketitle

\tableofcontents
	%%-----------------------------
	%%      your text
	%%-----------------------------

	\section{Introduction}
	The model of cascaded  ODE-PDE systems can be applied to a wide range of engineering problems, such as flexible robotic arms \cite{Cheng2024INJRC}, road traffic \cite{Hasan2016}, and metal rolling processes \cite{Diagne2017TAC}, etc.
	Over the past decades, the stabilization for such systems has been extensively studied in the   literature; see, e.g., \cite{Antonio2010,Cai2019,Diagne2017TAC,Guo2025,Krstic2008scl,krstic2009,Krstic2009TAC,Krstic2009SCL,Nikdel2021,wang2025TAC}. It should be noted that  in practical applications, system  stability is often compromised by complex environmental conditions, such as modeling errors, systems' uncertainties, and external disturbances. To address this,  the input-to-state stability (ISS) theory---first introduced by Sontag for finite-dimensional nonlinear systems \cite{sontag1990further} and  later extended to infinite-dimensional systems---provides a crucial framework for characterizing the influence of external inputs on stability, holding both theoretical and practical significance  {\cite{Mironchenko2023book, Mironchenko2020, Karafyllis2018book}}.
	Consequently,  within the framework of ISS, stabilization for cascaded ODE-PDE systems  has attracted considerable   attention in recent years; see, e.g., \cite{zhang2020,zhang2025IMA,abdallah2025,Zhang2021,Zhang2019}.
	
%	Notably, in \cite{Zhang2019} and \cite{zhang2020}, the authors  designed boundary  backstepping controllers for bidirectional pointwise interconnection systems in which an ODE is coupled with a heat and a wave equation, respectively,   and in the presence of  disturbances   acting solely within the ODE and established  the ISS  in the $L^2$-norm for the closed-loop system by using the Lyapunov method.
   Notably,  when external disturbances  appear   only in the ODE plants,   the authors  of   \cite{Zhang2019} and \cite{zhang2020} considered     an ODE cascaded with the heat equation  and  the wave equation,    respectively, having the following forms:
  %	\begin{subequations}
  		\begin{align*}
  			\dot{X}(t)=&A X(t)+B u(0, t)+B_1 d(t),  t>0, \\
  			u_t(z,t)=&u_{zz}(z,t)+c u(z,t),  0<z<1, t>0, \\
u_z(0, t)=&C X(t),  t>0, \\
  u_z(1, t)=&U(t),  t>0,
  		\end{align*}
 % 		\end{subequations}
  and
%\begin{subequations}
	\begin{align*}
		\dot{X}(t)=&A X(t)+B u(0, t)+B_1 d(t),  t>0, \\
		u_{t t}(z,t)=&u_{zz}(z,t),  0<z<1, t>0, \\
		u_z(0, t)=&C X(t),  t>0, \\
		u_z(1, t)=&U(t),  t>0,
  \end{align*}
%    		\end{subequations}
   where  $d(t) \in \mathbb{R}$ represents  external disturbances, $u(0, t) \in \mathbb{R}$ and $X(t) \in \mathbb{R}^{n \times 1}$ are taken as inputs to  the ODEs and the Neumman boundary terms of the  PDEs, respectively,
   $U(t) \in \mathbb{R}$ is the control input,  $A \in \mathbb{R}^{n \times n}$, $B, B_1 \in \mathbb{R}^{n \times 1}, C \in \mathbb{R}^{1 \times n}$ are constant matrices, and $c>0$ is a constant. By using the method of backstepping, the authors  of \cite{Zhang2019} and \cite{zhang2020} designed     boundary  feedback controllers to ensure the ISS    in the $L^2$-norm for the cascaded systems in closed loop.

  	% \textcolor{blue}{In \cite{zhang2020}, the authors  consider the ODE-wave feedback-connection system with bidirectional interconnection at a single point:}
%  	$$
%  	\begin{cases}\textcolor{blue}{\dot{X}(t)=A X(t)+B u(0, t)+B_1 d(t),} & t>0, \\ \textcolor{blue}{u_{t t}(z,t)=u_{zz}(z,t),} & 0<z<1, t>0, \\ \textcolor{blue}{u_z(0, t)=C X(t),} & t>0, \\ \textcolor{blue}{u_z(1, t)=U(t),} & t>0,\end{cases}
%  	$$
%  	\textcolor{blue}{where $X(t) \in \mathbb{R}^{n \times 1}$ and $u(z,t) \in \mathbb{R}$ are the states of ODE and wave PDE, respectively, $U(t) \in \mathbb{R}$ is the control input, $A \in \mathbb{R}^{n \times n}, B, B_1 \in \mathbb{R}^{n \times 1}, C \in \mathbb{R}^{1 \times n}, c>0$, and $d(t) \in \mathbb{R}$ is a bounded disturbance satisfying $|d(t)| \leq M_1$ where $M_1$ is a positive constant and $\dot{d}(t)$ is a piecewise continuous function on $\mathbb{R}^{+}$.}
%  	

  	% In \cite{Zhang2021},  the authors consider  an ODE cascaded by a reaction-diffusion equation with disturbances in both the ODE and PDE domains, as well as   Neumann boundary disturbances,  and design   boundary feedback controllers    by combining backstepping   with sliding mode control, with ISS  established    using the Lyapunov method  in the $L^2$-norm.
  	  When external disturbances appear  in both the ODEs    and the PDEs,     the authors  of \cite{Zhang2021} investigated   an ODE  cascaded with  a parabolic equation under the following form:
  	 \begin{subequations} \label{ODE-PDE-1}
  \begin{align}
  	\dot{X}(t) =& A X(t) + B u(0,t) + B_1 d_1(t),  t>0, \\
  	u_t(z,t) =& \varepsilon u_{zz}(z,t) + \phi(z) u(z,t) + \lambda(z) u(0,t)+ \int_0^z l(z,y) u(y,t) \, dy + d_2(z,t),  0<z<1, t>0, \\
  	u_z(0,t) =& d_3(t), t>0,\\
  	u_z(1,t) =& U(t) + d_4(t), t>0,
  		%\textcolor{blue}{X(0) = X_0, \quad u(x,0) = u_0(x).}
  		\end{align}
  		  		\end{subequations}
  where $d_1(t), d_2(z,t), d_3(t), d_4(t)\in\mathbb{R} $ represents external disturbances,  $u(0, t) \in \mathbb{R}$  is taken as an input  to  the ODE while  $X(t) \in \mathbb{R}^{n \times 1}$ is not an input of the  PDE,  $A \in \mathbb{R}^{n \times n}$, $B, B_1 \in \mathbb{R}^{n \times 1} $ are constant matrices, $\varepsilon > 0$ is a constant, $\phi(z) ,\lambda(z), l(z,y)$ are  space-varying functions. Under the assumption that  $|d_i(t)|+\|d_2(\cdot,t)\|_{L^2(0,1)} \leq M $ with a positive constant $M $ for   $i=1, 3,4$,  the authors designed boundary feedback control law $U(t)$
by using   backstepping   and the sliding mode control with the aid of the parameter $M$, and established   the local ISS  estimate  in the $L^2$-norm for the closed-loop system  by using the Lyapunov method.
  %
%
%
%
%  and
%  	 \textcolor{blue}{where \( x \in (0,1) \), \( X(t) \in \mathbb{R}^{n \times 1} \) and \( u(z,t) \in \mathbb{R} \) are the states of ODE and reaction-diffusion equation, respectively.
%  	 \( A \in \mathbb{R}^{n \times n} \), \( B, B_1 \in \mathbb{R}^{n \times 1} \), \((A,B)\) is stabilizable and \( \varepsilon > 0 \) is a constant.
%  	 \(\phi, \lambda \in C^1([0,1])\) are the spatially varying functions, and \( l \in C^1([0,1] \times [0,1]) \).
%  	 \( U(t) \in \mathbb{R} \) is the control input.
%  	 The disturbances are bounded, i.e., \( |d_i(t)| < M \, (i=1,3,4) \) and \( \|d_2(\cdot,t)\|_{L^2(0,1)} < M \) for some \( M > 0 \) and all \( t \geq 0 \).
%  	 Moreover, \( d_i \in H^{1}_{\text{loc}}([0,\infty)) \) for \( i=1,3,4 \), \( d_2 \in H^{1}_{\text{loc}}([0,\infty); L^2(0,1)) \), and
%  	 \[
%  	 H^{1}_{\text{loc}}([0,\infty)) = \left\{ v: [0,\infty) \to \mathbb{R} \mid v \in H^1(V), \text{ for each } V \subset \overline V \subset [0,\infty) \text{ and } \overline V \text{ is compact} \right\}.
%  	 \]
%  	 Let \( L^2(0,1) \) be the Hilbert space with the usual inner product \(\langle\cdot,\cdot\rangle\) and norm \(\|\cdot\|\).}
  	  	    %In \cite{abdallah2025}, the authors design an output feedback controller by backstepping and achieve ISS by Lyapunov method  for an  ODE cascaded by heat PDE subject to in-domain disturbances in both subsystems and the Neumann boundary disturbance  in the $L^2$-norm.

Furthermore, the authors  of \cite{zhang2025IMA} extended the application of the combined approach of backstepping and sliding mode control        to the input-to-sate stabilization problem for   the cascaded system \eqref{ODE-PDE-1} when  $\varepsilon =1$, $\phi(z)$ becomes a constant, $\lambda(z)= d_2(z,t)=0$, and $d_3(t)$ is replaced with $CX(t)$, namely, $X(t)  $ is  also  input into  the Neumman boundary term  of the  PDE. Analogous to \cite{Zhang2021}, under the assumption that all   disturbances are bounded, the authors of  \cite{zhang2025IMA}  designed a boundary feedback controller to  ensure   the local ISS    in the $L^2$-norm for the closed-loop system.

%
%an ODE cased with a parabolic PDE under the following form
%\begin{align*}
%  \begin{cases}\textcolor{blue}{\dot{X}(t) = A X(t) + B u(0,t) + B_1 d_1(t), } & t>0, \\ \textcolor{blue}{\mu_t(z,t)=\mu_{zz}(z,t)+\varepsilon \mu(z,t)+\int_0^x \lambda(x, y) \mu(y, t) \mathrm{d} y,} & 0<z<1, t>0, \\ \textcolor{blue}{\mu_z(0, t)=C Y(t),} & t>0, \\ \textcolor{blue}{\mu_z(1, t)=U_c(t)+d_2(t),} & t>0,\end{cases}
%\end{align*}
%    \textcolor{blue}{where $Y(t) \in \mathbb{R}^{n \times 1}$ and $\mu(z,t) \in \mathbb{R}$ are the states of ODE and PIDE, respectively, and $U_c(t) \in \mathbb{R}$ is the control input to the entire system. $A \in \mathbb{R}^{n \times n}, B, B_1 \in \mathbb{R}^{n \times 1}, C \in \mathbb{R}^{1 \times n}$, we assume  $(A, B)$  is stabiliable and $(A,C)$ is observable. Parameter $\varepsilon>0$ is a reaction coefficient, $\lambda(x, y)$ is the spatial weighting kernel function and the integral term denotes the spatial memory. The external disturbances $d_1(t), d_2(t) \in \mathbb{R}$ are bounded: $\left|d_1(t)\right| \leq M_1$ and $\left|d_2(t)\right| \leq M_2$, where $M_1>0$ and $M_2>0$ are constants. Let $L^2(0,1)$ be the Hilbert space with the usual inner product $\langle\cdot\rangle$ and norm $\|\cdot\|$.}

  It worth noting that in the literature, the disturbances considered in the cascaded systems  are    distributed either in the domain or on the {Neumann} boundary of the domain. Thus, the effect of Dirichlet boundary disturbances, which impose a  significant challenge on the  Lyapunov  stability analysis\cite{Mironchenko2020,Karafyllis2018book,Zheng2024}, on the system stability {has} not been investigated yet.  In addition,     in \cite{zhang2020,zhang2025IMA,Zhang2021,Zhang2019}, all disturbances   are assumed to be globally bounded, which is a very strong condition,  and the ISS estimates are given  in the $L^2$-norm while ISS characterizations in other norms are less provided.
   %This is because Dirichlet boundary disturbances, arising from inhomogeneous boundary conditions, present challenges for stability analysis when applying the classical Lyapunov method.
   %the controllers designed in \cite{Zhang2021,zhang2025IMA} by using the sliding mode control  are straightforward to design and offer  a rapid dynamic response; however, they  may suffer form chattering  due to imperfections in switching devices and delays \cite{Khalil2001non}. Therefore, how to design a continuous controller for cascaded ODE-PDE system directly to avoid it becomes a problem worth exploring.
%
  A more important point is that although sliding mode control can be effectively used in controller design for input-to-state stabilizing cascaded ODE-PDE systems as presented in \cite{Zhang2021,zhang2025IMA}, its inherent high-frequency switching characteristic may lead to chattering in the system  or weaken the {robustness} of the system within the predefined boundary layer \cite{Khalil2001non},  adversely affecting stability and equipment lifespan in practical applications. Therefore, it  remains a highly challenging task  to design continuous boundary feedback controllers for ODE-PDE systems cascaded via  boundary terms to ensure that the closed loops   are ISS under various disturbances while  avoiding chattering phenomena  and eliminating the steady-state error.
 %Therefore, it remains a significant challenge  how to avoid the chattering  phenomenon,  designing a continuous controller for input-to-state stabilizing     ODE-PDE systems cascaded via boundary terms  remains a significant challenge in the presence of various external disturbances.

In this paper, we investigate the input-to-state stabilization for an ODE cascaded with a parabolic PDE  under various disturbances,  including  internal disturbances of the ODE and in-domain disturbances,  Robin  boundary disturbances, as well as Dirichlet boundary disturbances of the PDE. In the considered setup, the value of the PDE solution at the left  boundary   is fed into the ODE, while the actuator   is placed at the right Dirichlet boundary. For controller design, we exclusively adopt the backstepping method to devise a continuous boundary feedback control law, thereby avoiding the use of sliding mode control. For stability analysis---particularly to handle the PDE boundary term appearing in the ODE and the Dirichlet boundary disturbances in the PDE when applying the Lyapunov method---we first decouple the cascaded system via backstepping,  and subsequently, employ {the generalized} Lyapunov functionals introduced  in \cite{Zheng2024,Zheng2025} to establish the ISS  in the $\sup$-norm for the target PDE system, thereby ultimately arriving at the  ISS   for the original cascaded ODE-PDE system subject to different disturbances under the framework of Lyapunov stability theory. %Note that when analyzing the ISS for a system using the generalized Lyapunov method, the   critical step is to construct the generalized ISS-Lyapunov functional (GISS-LF) for the system.
 {It is worth mentioning that in the well-posedness analysis,     we take a spatially continuous function  space  as the state space  to ensure that ISS in the  $\sup$-norm is well-defined. This, in comparison with the  most of existing literature  typically  considering the problem in the Hilbert spaces, brings   more complexities when using the analytic semigroup theory of linear operators.}

Overall, the main contributions of this paper are threefold:
\begin{enumerate}
	\item[(i)] For the ODE   cascaded by a parabolic PDE  with different types of disturbances,  a continuous state feedback controller is designed to ensure the ISS of the closed-loop system by using backstepping while avoiding the use of  sliding mode control and the boundedness parameter of disturbances.
	\item[(ii)] The ISS  in the $\sup$-norm  of the    cascaded   system  is assessed under the framework of Lyapunov stability theory. In particular, we show how to use the  generalized Lyapunov functionals to prove the ISS for   parabolic PDEs with general space-time-varying coefficients and Dirichlet-Robin boundary disturbances.
\item[(iii)]  The well-posedness of the target PDE system   is proved by utilizing the technique of lifting and  the analytic semigroup theory due to the appearance of $\sup$-norm of the solution.
\end{enumerate}

In the remainder of the paper, some basic notations employed in this paper are introduced first. In Section~\Rref{sec:Problem setting}, the problem formulation is presented. In Section~\Rref{sec:controller design}, we demonstrate how to design a continuous state feedback controller by using backstepping rather than the  sliding mode control.  In Section~\Rref{well-posedness}, the     semigroup method is utilized to prove the well-posedness of the solution in the  continuous functionals  spaces. In Section~\Rref{sec:input-to-state stability}, we show how to establish the ISS for parabolic  PDEs with  general space-time-varying coefficients and Dirichlet boundary disturbances by employing the generalized Lyapunov functionals. Furthermore, we utilize this method to prove the main result on the ISS of the cascaded  ODE-PDE  in closed loop. Numerical simulations are carried out in Section~\Rref{sec:numerical results} to illustrate the effectiveness of the proposed control scheme. Some conclusions are presented in Section~\Rref{conclusion}.

\textbf{Notation}  {${\mathbb{N}_0}:=\{0,1,2,...\}$, ${\mathbb{N}}:=\mathbb{N}_0\setminus\{0\}$, $\mathbb{R}:=(-\infty,+\infty)$, $\mathbb{R}_{\geq 0}:=[0,+\infty)$, $\mathbb{R}_{> 0}:=(0,+\infty)$, and $\mathbb{R}_{\leq 0}:=(-\infty,0]$.}
 Let $Q_\infty$$:=(0,1)\times {\mathbb{R}_{>0}}$ and $\overline{Q}_\infty:=[0,1]\times {\mathbb{R}_{\geq 0}}$.
For $T\in\mathbb{R}_{>0} $, let $Q_T := (0,1) \times  (0,T)$ and $\overline{Q}_T$$:=$$[0,1]$$\times$$[0,T]$.

For $N\in\mathbb{N}$, $\mathbb{R}^N$  denotes the $N$-dimensional Euclidean space with the standard  norm $|\cdot|$. For a domain $\Omega$ (either open or closed) in $\mathbb{R}^1$ or $\mathbb{R}^2$,
let $C\left({\Omega};\mathbb{R}^N\right):=\{v: {\Omega} \rightarrow \mathbb{R}^N|~$$v$ is continuous on $\Omega$\} with the norm $\|v\|_{C\left({\Omega};\mathbb{R}^N\right)}:=\sup_{z\in\Omega}|v(z)|$. % l{For simplicity, we write $C\left({\Omega} \right):=C\left({\Omega};\mathbb{R}^N\right)$ when $N=1$.}
For $i\in\mathbb{N}$, $C^i\left({\Omega};\mathbb{R}^N\right):=\{v: {\Omega} \rightarrow\mathbb{R}^N\Big|\ v$ has continuous derivatives up to order $i$ on ${\Omega}$\}. Let $ C\left(\mathbb{R}_{\geq 0}; Y\right):=\{v: {\mathbb{R}_{\geq 0}} \rightarrow Y|~$$v$ is continuous on ${\mathbb{R}_{\geq 0}}$\} with the norm $\|f\|_{C\left(\mathbb{R}_{\geq 0} ; Y\right)}:=\sup_{z\in \mathbb{R}_{\geq 0}}\|f(z)\|_Y$, where  $Y$ is a normed linear space.

% with $\|g\|_{C\left([0,T];\mathbb{R}^n\right)}=\max_{t \in[0, T]}|g(t)|$. %Denoted by $C\left(\mathbb{R}_{\geq 0} ; \mathbb{R}_{\geq 0}\right)$ the set of all continuous functions $g: \mathbb{R}_{\geq 0} \rightarrow \mathbb{R}_{\geq 0}$.
%For a normed linear space $Y$, $ C\left(\mathbb{R}_{\geq 0} ; Y\right)$ denotes the set of all continuous functionals $g: \mathbb{R}_{\geq 0} \rightarrow Y$ with the norm $\|f\|_{C\left(\mathbb{R}_{\geq 0} ; Y\right)}:=\sup_{x\in \mathbb{R}_{\geq 0}}\|f(x)\|_Y$.

%with $\|g\|_{ C\left(\mathbb{R}_{\geq 0} ; Y\right)}:=\sup _{s \in \mathbb{R}_{\geq 0}}\|g(s)\|_Y<+\infty$.
%For $f: X \rightarrow \mathbb{R}$, where $X \subset \overline{Q}_{\infty}$, the notation $f[t]$ {(or $f[{z}]$)} denotes the profile at a certain $t \in \mathbb{R}_{\geq 0}$ (or ${z} \in[0,1]$), that is, $f[t]({z})=f({z}, t)$ (or $f[{z}](t)=f({z}, t)$).
%Let the state space  $\mathcal{N}={C\left([0,T];\mathbb{R}^n\right)}\times C(\overline{Q}_T)$ with the norm $\|(f_1,f_2)\|_\mathcal{N}=\max_{t\in[0,T]}|f_2|+\max_{(z,{t})\in \overline Q_T}|f_1|$.
%Áî$L^{\infty}((0,T);L^\infty(0,1))$ $:=$$\{v: (0,1)\times(0,T) \rightarrow\mathbf{R}|~v(\cdot,t)\in L^\infty(0,1), v(\cdot,x)\in L^p(0,T)\}$ ²¢¸³Óè·¶Êý
% \begin{aligned}\|v\|_{L^{p}((0,T);L^1(0,1))} :=
	%\left(\int_0^T\!\!\!\left(\int_0^1|v(z,t)|\text{d}x\right)^p\!\text{d}t\right)^\frac{1}{p}. \end{aligned}$$
$\mathbb{R}^{N\times M}$ is the linear space of $N\times M$ matrices. When $M=N$, $\mathbb{R}^{N\times N}$ is a Banach algebra. The superscript $\top$ represents matrix transposition. For a matrix $Q$, the notation $Q>0$ denotes that $Q$ is symmetric and positive definite. The symbol $\textbf{1}_N$ and $\textbf{0}_N$ represents an $N$-dimensional column vector with all elements being 1 and 0, respectively.
%{The notation $e^{Dz}$ for any matrix $D$ and $z\in[0,1]$ refers to the matrix exponential.}

For normed linear spaces $X$ and $Z$, let $L(X, Z)$ be the space of bounded linear operators $P: X \rightarrow Z$. Let $L(X):=L(X, X)$ with the norm$
\|P\|:=\|P\|_{L(X)}:=\sup \left\{\|P x\|_X \mid x \in X,\|x\|_X \leq 1\right\}$.
For a given linear operator $\mathcal{A}$, $D(\mathcal{A})$ and $\rho(\mathcal{A})$  denote the domain of $\mathcal{A}$ and the resolvent set of $\mathcal{A}$, respectively.

Let $\mathcal {K}$$:=$$\{\gamma:\mathbb{R}_{\geq 0}$$\rightarrow$$\mathbb{R}_{\geq 0}|$$\gamma(0)${$=$}{$0$}, $\gamma$ is continuous, strictly increasing\}, $\mathcal {L}$$:=$$\{\gamma:$$\mathbb{R}_{\geq 0}$$\rightarrow$$\mathbb{R}_{\geq 0}|$ $\gamma$ is continuous, strictly decreasing, {$\lim_{s\rightarrow\infty}\gamma(s)$}{$=$}{$0$}\}, $\mathcal {KL} := \{\beta:$$\mathbb{R}_{\geq 0}$$\times$$\mathbb{R}_{\geq 0}$ $\rightarrow$$\mathbb{R}_{\geq 0}|$${\beta}$ is continuous, $\beta(\cdot,t)$$\in$$\mathcal {K}$, $\forall t \in \mathbb{R}_{\geq 0}$; $\beta(s,\cdot) \in \mathcal {L},\forall  s \in {\mathbb{R}_{> 0}}\}$.

\section{Problem Formulation}\label{sec:Problem setting}
In this paper, we consider the stabilization problem of  an ODE  cascaded by a  $1$-D linear parabolic PDE with different disturbances under the following form:
\begin{subequations}\label{original system}
	\begin{align}
		\dot{{X}}(t)=&A {X}(t)+B u(0, t)+{D}(t), t\in \mathbb{R}_{>0}, \\
		u_t(z, t)=&u_{z z}(z, t)+b(z) u(z, t)+c(z) u(0, t)+f(z, t), (z,t)\in Q_\infty, \label{original equation}\\
		u_z(0, t)=&q u(0, t)+ d_0(t), t\in \mathbb{R}_{>0}, \label{original boundary 0}\\
		u(1, t)=&U(t)+d_1(t), t\in \mathbb{R}_{>0}, \label{original boundary 1} \\
		X(0)=&X_0, u(z, 0)=u_0(z), z\in(0,1), \label{original initial value}
	\end{align}
\end{subequations}
where  $X(t)\in\mathbb{R}^{N}$ and $u(z,t)\in \mathbb{R}$ are the states of the ODE  and  the parabolic PDE, respectively, $A\in\mathbb{R}^{N\times N}$, $B\in\mathbb{R}^{N }$ are constant matrices with the pair $(A, B)$ being controllable,  $b(z),c(z)$ are space-varying coefficients,  $q$ is a constant, ${D(t)\in\mathbb{R}^{N}}$ represents internal disturbances of the ODE plant, $f(z, t)$ represent in-domain disturbances of the PDE plant while $d_0(t)$ and $d_1(t)$ represent Robin and Dirichlet boundary disturbances, respectively,  $X_0\in\mathbb{R}^{N },u_0(z)\in\mathbb{R} $ are the initial {data}, and  $U(t)$ is the control input that needs to be designed. The cascaded system \eqref{original system} is depicted in Fig. \ref{fig:cascade}.

 For system \eqref{original system}, the parabolic PDE  may be unstable in open loop when the reaction coefficient $b(z)$ is positive and sufficiently large. Moreover, the ODE  may also be driven unstable through two complementary mechanisms: either the matrix $A$ has eigenvalues with positive real parts, or the unstable dynamics of the PDE   are imported into the ODE via the coupling interface, e.g., $u(0,t)$.

The aim of the work  is   to design a  continuous state feedback controller by directly using  backstepping rather than the sliding mode control to ensure the robustness of  the cascaded ODE-PDE system  in closed loop and to prove the ISS in the $\sup$-norm   under the framework of Lyapunov stability theory when  different disturbances appear.
%\begin{enumerate}
%	\item[(i)] We design a continuous state feedback controller based on backstepping to stabilize the cascaded ODE-PDE system~\eqref{original system}, and the well-posedness of the solution to the cascaded ODE-PDE  system~\eqref{original system} is proved by using operator semigroup theory in the space of continuous functions.
%	\item[(ii)] To address Dirichlet boundary disturbances and variable coefficients, the generalized Lyapunov method is employed for stability analysis in the max-norm.
%\end{enumerate}

\begin{figure}[ht]
	\centering
	\begin{tikzpicture}[
		node distance = 2.2cm and 1.8cm,
		block/.style  = {draw, rectangle, minimum width=2.4cm, minimum height=1.2cm,
			align=center, rounded corners},
			 noframe/.style={rectangle, minimum width=1cm, minimum height=1.2cm, align=center},
		arr/.style    = {-stealth, thick}
		]
		% ½Úµã
		\node[block,minimum width=3.4cm, minimum height=1.2cm] (act) {parabolic  PDE \\ (actuator)};
		\node[block, right=of act] (plant) {ODE\\(plant)};
		\node[noframe, left=of act] (control) {$U(t)$};
			% u(z,t) ±êÇ©ÔÚ PDE ÉÏ·½
		\node[above=0.05cm of act] {$u(z,t)$};
		\node[above=0.05cm of plant] {$X(t)$};
		% Íⲿ¶Ë×Ó
		%\coordinate[above left=1cm and 1.5cm of act] (inU);
		%\coordinate[right=1.5cm of plant] (outX);
		% ¼ýÍ·
		\draw[arr] (control) -- node[above] {$u(1,t)$} (act.west);
		\draw[arr] (act.east) -- node[above] {$u(0,t)$} (plant.west);
		\coordinate (fixed point) at ([xshift=1.5cm]plant.east);
	\draw[arr] (plant) -- node[above] {} (fixed point);
		%\draw[arr] (plant.east) -- node[above] {$X(t)$} (outX);
		% PDE ÄÚ²¿ÐźÅ
	%	\draw[arr] ([yshift=3mm]act.west) -- ++(-1.2,0) node[left] {$u(z,t)$};
	%	\draw[arr] ([yshift=-3mm]act.west) -- ++(-1.2,0) node[left] {$u_z(D,t)$};
	\end{tikzpicture}
	\caption{Signal flow of the ODE-PDE cascade}
	\label{fig:cascade}
\end{figure}
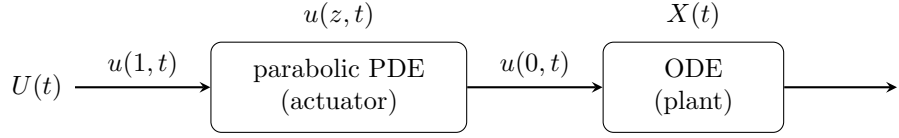

Throughout this paper, we always assume that
\begin{align}
%	f\in& \textcolor{blue}{C^{\alpha,\frac{\alpha}{2}}}(\overline{Q}_{\infty};\mathbb{R}),\quad \quad  D  \in C(\mathbb{R}_{\geq 0};\mathbb{R}^N),\quad d_0,d_1\in {\textcolor{blue}{C^{1,\alpha}}}(\mathbb{R}_{\ge 0};\mathbb{R}),\label{solution-exsitence}\\
	q\in\mathbb{R}_{>0},\quad b,c,u_0\in C((0,1);\mathbb{R}),\quad D\in C(\mathbb{R}_{\ge 0};\mathbb{R}^N),\quad f\in C(\overline{Q}_{\infty};\mathbb{R}), \quad  d_1,d_2\in C^1(\mathbb{R}_{\ge 0};\mathbb{R}),\label{q_c}
\end{align}
and \begin{align}
\sup_{z\in(0,1)} |b(z)|+\sup_{z\in(0,1)}|c(z)| +\sup_{z\in(0,1)}|u_0(z)| <+\infty.\label{equ4}
\end{align}
 Moreover, assume that %$D,f,d_i,d_i'(i=1,2)$  are locally H\"older continuous in $t$ and
   for any interval $I\in \mathbb{R}_{>0}$ there exist  positive constants  $\alpha\in(0,1)$ and $L$ such that
%uniformly with respect to (w.r.t.) $z$, i.e., for any $t\in \mathbb{R}_{>0}$ there exist a neighborhood $I\subset \mathbb{R}_{>0}$ of $t$ and constants $\alpha\in(0,1)$  and $L>0$ such that
\begin{align*}
|D(t_1)-D(t_2)|\le& L\,|t_1-t_2|^{\alpha},  \forall\, t_1,t_2\in I,\\
 |f(z,t_1)-f(z,t_2)| \le& L\,|t_1-t_2|^{\alpha},   \forall\, z\in(0,1),\ \forall\, t_1,t_2\in I,\\
 |d_i(t_1)-d_i(t_2)|+|d_i'(t_1)-d_i'(t_2)|
\le &L\,|t_1-t_2|^{\alpha},  \forall\, t_1,t_2\in I.
\end{align*}

%\textcolor{blue}{In addition,   $D\in C(\mathbb{R}_{\ge 0};\mathbb{R}^N)$ and $d_1,d_2\in C^1(\mathbb{R}_{\ge 0};\mathbb{R})$, and for any $t\in\mathbb{R}_{>0}$ there exist a neighborhood $I\subset \mathbb{R}_{>0}$ of $t$ and constants $\alpha\in(0,1)$ and $L>0$ such that, for each $i\in\{1,2\}$ and all $t_1,t_2\in I$}
%\begin{align*}
%\textcolor{blue}{|D(t_1)-D(t_2)|\le L\,|t_1-t_2|^{\alpha}}
%\end{align*}
%and
%\begin{align*}
%\textcolor{blue}{|d_i(t_1)-d_i(t_2)|+|d_i'(t_1)-d_i'(t_2)|
%\le L\,|t_1-t_2|^{\alpha}.}
%\end{align*}

Next, we present the definition of ISS for system \eqref{original system}.
\begin{dfntn}
	The cascaded ODE-PDE system \eqref{original system} is said to be input-to-state stable (ISS) in the $\sup$-norm with respect to (w.r.t.) disturbances ${D}$, $f$, $d_0$, and $d_1$, if there exist functions $\beta \in \mathcal{KL}$, and $\gamma_i\in\mathcal{K},i=0,1,2,3$ such that (s.t.)  for any  $(X_0,u_0)\in \mathbb{R}^{N}\times C((0,1);\mathbb{R})$, the state $(X,u)$ admits the following estimate:
	\begin{align*}
		|X(T)|+\sup_{z\in(0,1)}\left|u(z,T)\right| \leq& \beta\left(\left|X_0\right|+\sup_{z\in(0,1)}\left|u_0(z)\right|,T\right)+\gamma_0\left(\sup_{t\in(0,T)}\left|{D}(t)\right|\right)+\gamma_1\left(\sup_{(z, t) \in  {Q}_T}\left|f(z,t)\right|\right)\notag\\
		&+\gamma_2\left(\sup_{t\in(0,T)}\left|d_0(t)\right|\right)
		+\gamma_3\left(\sup_{t\in(0,T)}\left|d_1(t)\right|\right), \forall T \in \mathbb{R}_{>0}.
	\end{align*}
%	or,
%	\begin{align*}
%		|X(T)|+\|u[T]\|_{L^\infty(0,1)} \leq& \beta\left(\left|X_0\right|+\left\|u_0\right\|_{L^\infty(0,1)},T\right)+\gamma_0\left(\left\|d_1\right\|_{L^{\infty}(0, T)}\right)+\gamma_1\left(\left\|f\right\|_{\textcolor{blue}{C(\overline{Q}_T})}\right)\notag\\
%		&+\gamma_2\left(\left\|d_0\right\|_{L^{\infty}(0, T)}\right)
%		+\gamma_3\left(\left\|d_1\right\|_{L^{\infty}(0, T)}\right), \forall T \in \mathbb{R}_{\geq 0}.
%	\end{align*}
\end{dfntn}

\section{Controller Design}\label{sec:controller design}
%\subsection{Selection of Target System}
To stabilize  system~\eqref{original system}, let us first introduce the following backstepping transformation:
\begin{subequations}\label{transformation}
	\begin{align}
		%	&\left\{\begin{array}{l}
			X(t)=&X(t), \label{transformation4a}\\
			w(z, t)=&u(z, t)-\int_0^z k(z, y) u(y, t) \text{d} y-\alpha(z)X(t), \label{transformation4b}
			%	\end{array}\right. \\
	\end{align}
\end{subequations}
along with control law
\begin{align}\label{state controller}
	U(t):=\int_0^1 k(1,y) u(y, t) \text{d} y+\alpha(1) X(t),
\end{align}
where $k$ and $\alpha$ are kernel functions to be determined later.

For system~\eqref{original system}, we choose the  following target system:
\begin{subequations}\label{original target system}
	\begin{align}
		\dot{X}(t)=&(A+BK) X(t)+B w(0, t)+{D}(t),  t\in \mathbb{R}_{>0}, \\
		w_t(z, t)=&w_{zz}(z, t)-\lambda_0(z) w(z, t)+\psi(z, t), (z,t)\in Q_\infty,\label{original target equation}\\
		w_z(0, t)=&q w(0, t)+d_0(t),  t\in \mathbb{R}_{>0},  \label{w-system-boundary-0}\\
		w(1, t)=&d_1(t),  t\in \mathbb{R}_{>0}, \label{w-system-boundary-1}\\
		X(0)=&X_0, w(z, 0)=w_0(z), z\in(0,1), \label{original target initial value}
	\end{align}
\end{subequations}
where $K\in\mathbb{R}^{1\times N}$ is a constant matrix s.t. $A+BK$ is   Hurwitz,   $\lambda_0\in C([0,1];\mathbb{R})$ is  a space-varying function satisfying
\begin{align}\label{lambda_0}
  \min_{z\in[0,1]}\lambda_0(z)>0,
\end{align}
and $\psi(z,t)$ is a space-time-varying function given by
\begin{align*}
	\psi(z,t):=f(z, t)-\int_0^z k(z, y) f(y, t) \text{d} y+k(z, 0) d_0(t)-\alpha(z) {D}(t).
\end{align*}
% \subsection{Selection of Kernel Function}

In order to determine the kernel functions  $k$ and $\alpha$, applying the transformation~\eqref{transformation}, we obtain
\begin{align}
	\dot{X}(t)=&A X(t)+B w(0, t)+B\alpha(0) X(t)+{D}(t) \notag\\
	=&\left(A+B\alpha(0)\right) X(t)+B w(0, t)+{D}(t), \label{zz}\\
	w_z(z, t)=&u_z(z, t)-k(z, z) u(z, t)-\int_0^z k_z(z, y) u(y, t) \text{d} y-\alpha^{\prime}(z) X(t), \notag\\
	w_{z z}(z, t)=&u_{z z}(z, t)-\frac{\text{d}}{\text{d} z}(k(z, z)) u(z, t) -k(z, z) u_z(z, t) -k_z(z, z) u(z, t)
	-\int_0^z k_{z z}(z, y) u(y, t) \text{d} y\notag\\
	&-\alpha^{\prime \prime}(z) X(t). \label{wxx}
\end{align}
On the other hand, the derivative of $w(z,t)$ w.r.t. $t$ is given by
\begin{align}\label{wtintial}
	w_t(z, t)=&u_t(z, t)-\int_0^z k(z, y) u_t(y, t) \text{d} y-\alpha(z) \dot{X}(t)  \notag\\
	=&u_{z z}(z, t)+b(z) u(z, t)+c(z) u(0, t)+f(z, t)  -\int_0^z k(z, y)\Big(u_{y y}(y, t)+b(y) u(y, t)+c(y) u(0, t)\notag\\
	&+f(y, t)\Big) \text{d} y -\alpha(z)\Big((A+B \alpha(0)) X(t)+B w(0, t)+{D}(t)\Big).
	%	=&u_{z z}(z, t)+\Lambda(t) u(z, t)+c(t) u(0, t)+f(z, t) \\
	%	& -k(z, z) u_z(z, t)+k(z, 0) u_z(0, t)+u(z, t) k y(z, z) \\
	%	& -u(0, t) k_z(z, 0)-\int_0^z u(y, t) k_{y y}(z, y) d y-\int_0^2 k(z, y) \wedge(y) u(y, t) d y \\
	%	& -\int_0^z k(z, y) c(y) u(0, t) d y-\int_0^z k(z, y) d z(y, t) d y \\
	%	& -\alpha(z)(A+B \alpha(0)) x(t)-\alpha(z) B W(0, t)-\alpha(z) B_1 d_1(t).
\end{align}
Through integration by parts twice, it yields
\begin{align}\label{integration by parts twice}
	\int_0^z k(z, y)u_{y y}(y, t)\text{d}y=&k(z, z) u_z(z, t)-k(z, 0) u_z(0, t) -u(z, t) k_y(z, z) +u(0, t) k_y(z, 0)\notag\\
	&+\int_0^z u(y, t) k_{y y}(z, y) \text{d} y.
\end{align}
Putting~\eqref{integration by parts twice} into~\eqref{wtintial} and by virtue of~\eqref{original boundary 0}, we have
\begin{align}\label{wt}
	w_t(z,t)=&u_{z z}(z, t)+b(z) u(z, t)+c(z) u(0, t)+f(z, t) -k(z, z) u_z(z, t)+k(z, 0) u_z(0, t)+u(z, t) k_y(z, z) \notag\\
	& -u(0, t) k_y(z, 0)-\int_0^z u(y, t) k_{y y}(z, y) \text{d} y
	-\int_0^z k(z, y) b(y) u(y, t) \text{d} y -\int_0^z k(z, y) c(y) u(0, t) \text{d} y\notag\\
	& -\int_0^z k(z, y) f(y, t) \text{d} y-\alpha(z)(A+B \alpha(0))X(t) -\alpha(z) B w(0, t)-\alpha(z) {D}(t).
\end{align}
By~\eqref{transformation}, \eqref{wxx}, and~\eqref{wt}, it yields
\begin{align}\label{target-system-1a}
	& w_t(z, t)-w_{z z}(z, t)+\lambda_0(z) w(z, t) \notag\\
	=&u(z, t)\left(b(z)+k_y(z, z)+\frac{\text{d}}{\text{d} z}(k(z, z))+k_z(z, z)+\lambda_0(z)\right)+\int_0^z u(y, t)\Big(-k_{ y y}(z, y)-k(z, y) b(y)\notag\\
	&+k_{z z}(z, y)-\lambda_0(z) k(z, y)\Big)\text{d} y  +c(z) u(0, t)+f(z, t)+k(z, 0) q u(0, t)+k(z, 0) d_0(t)-u(0, t) k_y(z, 0) \notag\\
	&-\int_0^z k(z, y) c(y) u(0, t) \text{d} y  -\int_0^z k(z, y) f(y, t) \text{d}y-\alpha(z)(A+B \alpha(0)) X(t)-\alpha(z)B u(0, t)\notag\\
	&+\alpha(z) B \alpha(0) X(t)-\alpha(z) {D}(t)  +\alpha^{\prime \prime}(z) X(t)-\lambda_0(z) \alpha(z) X(t) \notag\\
	=&u(z, t)\left(2\frac{\text{d}}{\text{d} z}(k(z, z))+b(z)+\lambda_0(z)\right)
	 +\int_0^z u(y, t)\Big(k_{ z z}(z, y)-k_{y y}(z, y)-(b(y)+\lambda_0(z)) k(z, y)\Big) \text{d} y \notag\\
	& +u(0, t)\left(c(z)+q k(z, 0)-k_y(z, 0)-\int_0^z k(z, y) c(y) \text{d} y-\alpha(z) B\right) +f(z, t)+k(z, 0) d_0(t)\notag\\
	&-\int_0^z k(z, y) f(y, t) \text{d} y -\alpha(z) {D}(t)
	+\Big(-\alpha(z)A+\alpha^{\prime \prime}(z)-\lambda_0(z) \alpha(z)\Big) X(t).
	%	& =f(z, t)+k(z, 0) d_0(t)-\int_0^z k(z, y) d z(y, t) d y-\alpha(z) B, d_1(t) \\
	%	& =\psi(z, t).
\end{align}
%Let $k(z,y)$ and $\alpha(z)$ satisfy the equations~\eqref{theta function} and \eqref{kernel function}. Then, by the definition of $\psi$, \eqref{original equation} is satisfied.
In addition, from \eqref{original boundary 0} and \eqref{transformation4b}, we have
%\begin{subequations}\label{target-system-1b}
\begin{align}\label{target-system-1b}
	w_z(0, t)=&u_z(0, t)-k(0,0) u(0, t)-\alpha^{\prime}(0) X(t) \notag\\
	=&q u(0, t)+d_0(t)-\alpha^{\prime}(0) X(t) \notag\\
	=&q w(0, t)+\left(q \alpha(0)-\alpha^{\prime}(0)\right) X(t)+d_0(t)-k(0,0) u(0, t).
\end{align}
By virtue of \eqref{original boundary 1}, \eqref{transformation4b}, and the state feedback control law \eqref{state controller}, we obtain
\begin{align}\label{target-system-1c}
	%	=&q w(0, t)+d_0(t), \\
	w(1, t)=&u(1, t)-\int_0^1 k(1, y) u(y, t) \text{d} y-\alpha(1) X(t)\notag\\
	=&d_1(t).
\end{align}
%\end{subequations}
%Then the boundary condition~\eqref{original boundary 0} and~\eqref{original boundary 1} are satisfied.

 Comparing \eqref{zz}, \eqref{target-system-1a}, \eqref{target-system-1b}, and \eqref{target-system-1c} with the target system \eqref{original target system}, it is obtained that for $(z,y)\in\overline{\Omega}=\{(z,y)|0\le y\le z\le 1\}$, we choose the kernel function $k(z,y)$ satisfying
\begin{subequations}\label{kernel function}
	\begin{align}
		k_{z z}(z, y)-k_{y y}(z, y)=&(b(y)+\lambda_0(z)) k(z, y), \\
		\frac{\text{d}}{\text{d} z}(k(z, z))=&-\frac{1}{2}(b(z)+\lambda_0(z)), \\
		k_y(z, 0)=&q k(z, 0)-\int_0^z k(z, y) c(y) \text{d} y+c(z)-\alpha(z) B, \\
		k(0,0)=&0
	\end{align}
\end{subequations}
and the matrix-valued function $\alpha(z)$ satisfying
\begin{subequations}\label{theta function}
	\begin{align}
		\alpha^{\prime\prime}(z)=&\alpha(z)(A+\lambda_0(z) I_N),\\
		q\alpha(0)=&\alpha^\prime(0),\\
		\alpha(0)=&K,
	\end{align}
\end{subequations}
where  $\frac{\text{d}}{\text{d}z}\left(k(z,z)\right) :=k_z(z,{y})|_{{y}=z}+k_{{y}}(z,{y})|_{{y}=z}$ and $I_N \in \mathbb{R}^{N \times N}$ is an $N\times N$ identity matrix.

The following Lemma demonstrates the existence of solutions to the equations \eqref{kernel function} and ~\eqref{theta function}.
\begin{prpstn}\label{kernel exitence}
	\begin{enumerate}
		\item[(i)] Equation~\eqref{kernel function} admits a unique solution $k $ in $C^2(\overline{\Omega};\mathbb{R})$.
		%In particular, when $c(z)\equiv c$, it can be express as
		%				\begin{align*}
			%					  k(z, {y})=&-(\Lambda+\lambda) z \frac{\mathcal{I}_1\left(\sqrt{c_0\left(z^2-{y}^2\right)}\right)}{\sqrt{c_0\left(z^2-{y}^2\right)}}\notag \\
			%					&+\frac{q c_0}{\sqrt{c_0+q^2}} \!\!\int_0^{z-{y}}\!\!\! e^{\frac{-q \tau} {2}}\mathcal{I}_0 \left(\!\sqrt{c_0(z+{y})(z-{y}-\tau)}\right)\notag\\
			%					&\quad \quad \quad \qquad \qquad \times \sinh \left(\frac{\sqrt{c_0+q^2}}{2} \tau\right) {\rm d} \tau,
			%		        \end{align*}
		\item[(ii)] Equation~\eqref{theta function} admits  a unique solution, which can be expressed as
		\begin{align}\label{equ19}
			\alpha(z)=\left(\begin{matrix} K & qK \end{matrix}\right)e^{\left(\begin{matrix} 0_{N} & A+\lambda_0(z) I_N\\I_N & 0_{N} \end{matrix}\right)z}\left(\begin{matrix} \textbf{1}_N \\ \textbf{0}_{N} \end{matrix}\right),
		\end{align}
		where $e^{\left(\begin{matrix} 0_N & A+\lambda_0(z) I_N\\I_N & 0_N \end{matrix}\right)z}$ is the matrix exponential  defined in the standard manner (see, e.g., \cite[p.~59, 4.1 Property]{Rugh:1996}) and $0_N  \in \mathbb{R}^{N \times N}$ denotes the $N \times N$ zero matrix, i.e., all its entries are equal to $0$.%and $A_1=\left(\begin{matrix} 0 & A+\lambda_0(z) I\\I & 0 \end{matrix}\right)$.
	\end{enumerate}
\end{prpstn}
\begin{proof}
%\begin{enumerate}
%\item
For (i), equation~\eqref{kernel function} can be solved by using the standard successive approximations. The detailed proof is provided in Appendix~\Rref{solution equation}.
%\item
  For (ii), equation~\eqref{theta function} is a linear ODE of  second order and  can be solved by  using the standard order reduction method (see \cite[pp. 26-28, 2.5 Example]{Rugh:1996}). Thus, the detailed verification is omitted.
%\end{enumerate}
\end{proof}

			The following proposition indicates that    the original system \eqref{original system} is equivalent to the target system \eqref{original target system} in a certain sense,  thereby enabling us to first establish the well-posedness and ISS for the target system and, consequently, for the original system.	
				\begin{prpstn}\label{equivelent}
The transformation  \eqref{transformation}, which maps $(X,u)   $ to $ (X,w) $, has a bounded inverse in $C (\mathbb{R}_{>0};  \mathbb{R}^N)  \times    C(\mathbb{R}_{\geq0};C((0,1);\mathbb{R}))$. %Consequently,  the system \eqref{original system} is equivalent to the system \eqref{original target system}.
					%For any $(z,t)\in\overline Q_T$.
					%Define the linear operator $\Theta, \ell$, and $\vartheta: \mathcal{N} \rightarrow \mathcal{N}$ as follows:
					%\begin{align}\label{Theta-transformation}
					%	\alpha(z, t):=&(\Theta \beta)(z, t)\notag\\
					%	:=&(\ell \beta)(z, t)+\int_0^z(\vartheta \beta)(y, t) \text{d} y,
					%\end{align}
					%where $\alpha=\left(\begin{array}{cc}X \\ w \end{array}\right), \beta=\left(\begin{array}{cc}X \\ u \end{array}\right), \ell=\left(\begin{array}{cc}\left(1-\frac{z}{2}\right) I & 0 \\ -\theta(z) & 1\end{array}\right)$,
					%$\vartheta=\left(\begin{array}{cc}\frac{1}{2} \operatorname{I} & 0 \\ 0 & -k(z,y)\end{array}\right)$.
					
					%Then, $\Theta$ has a bounded linear inverse $\Theta^{-1}: \mathcal{N} \rightarrow \mathcal{N}$, and the system~\eqref{original system} is equivalent with~\eqref{original target system}.
				\end{prpstn}
				\begin{proof}
					The proof is provided in Appendix~\Rref{system-equivelent}.
				\end{proof}
\section{Well-posedness Analysis}\label{well-posedness}
%According to \cite[Theorem 7.7, p.30]{pazy1983}, when assumptions \eqref{solution-exsitence} holds, there exist unique solutions $(X,u) {\in}\mathcal{N}$ and $(X,w) {\in}\mathcal{N}$ for system \eqref{original system} and system \eqref{original target system}, respectively. Now, we first show the ISS of system~\eqref{original target system}.
In this section, we employ the semigroup theory for linear operators to prove  the well-posedness of the target system \eqref{original target system} in the continuous functional spaces.
\begin{thrm}\label{solution exsistence}
	For any $T \in \mathbb{R}_{>0}$ and $(X_0,w_0)\in \mathbb{R}^N\times C((0,1);\mathbb{R})$, the cascaded target system \eqref{original target system} admits a unique  solution $(X,w) {\in} C^1  (\mathbb{R}_{>0};  \mathbb{R}^N)  \times  ( C(\mathbb{R}_{\geq0};C((0,1);\mathbb{R}))\cap C^1(\mathbb{R}_{>0}; C((0,1);\mathbb{R})))$.
\end{thrm}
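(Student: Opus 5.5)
The plan is to treat the target system \eqref{original target system} as a cascade. First we solve the PDE for $w$, which does not depend on $X$. Then we obtain $X$ from the variation-of-constants formula
\begin{align*}
X(t)=e^{(A+BK)t}X_0+\int_0^t e^{(A+BK)(t-s)}\big(Bw(0,s)+D(s)\big)\,\text{d}s .
\end{align*}
Once $t\mapsto w(0,t)$ is known to be continuous on $\mathbb{R}_{\geq 0}$ (and Hölder on compact subintervals of $\mathbb{R}_{>0}$), $X$ exists, is unique, and belongs to $C^1(\mathbb{R}_{>0};\mathbb{R}^N)$. So the whole argument reduces to the $w$-subsystem with inhomogeneous Robin and Dirichlet data.

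For the PDE we first lift the boundary data. Set
\begin{align*}
v(z,t):=w(z,t)-\phi(z,t),\qquad \phi(z,t):=\frac{(z-1)\,d_0(t)}{1+q}+d_1(t).
\end{align*}
This $\phi$ satisfies $\phi_z(0,t)=q\phi(0,t)+d_0(t)$ and $\phi(1,t)=d_1(t)$. Then $v$ solves
\begin{align*}
v_t=v_{zz}-\lambda_0 v+\tilde\psi,\qquad v_z(0,t)=qv(0,t),\qquad v(1,t)=0,
\end{align*}
with $\tilde\psi:=\psi-\phi_t-\lambda_0\phi$. Here $\phi_t$ involves $d_0',d_1'$, which are continuous and locally Hölder by the standing assumptions. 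The source $\psi$ inherits local Hölder continuity in $t$, uniformly in $z$, from $f,d_0,D$, because $k$ and $\alpha$ are continuous and bounded on compacts by Proposition~\Rref{kernel exitence}. On $X=C([0,1];\mathbb{R})$ (identifying bounded continuous data on $(0,1)$ with their extensions, which I expect is the intended meaning) we define
\begin{align*}
\mathcal{A}v:=v''-\lambda_0 v,\qquad D(\mathcal{A}):=\{v\in C^2([0,1]):\ v'(0)=qv(0),\ v(1)=0\}.
\end{align*}

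The main step, and the expected obstacle, is to show that $\mathcal{A}$ generates an analytic semigroup on the sup-norm space $C([0,1])$. The usual Hilbert-space dissipativity argument is not available here, and $D(\mathcal{A})$ is not dense in $C([0,1])$ because of the condition $v(1)=0$. I would therefore do two things. First, I would prove the sectorial resolvent estimate $\|(\mu-\mathcal{A})^{-1}\|\le M/|\mu|$ in a sector $|\arg\mu|<\pi/2+\delta$. This can be done either by explicit construction of the Green's function of the Sturm--Liouville problem, or by invoking the known result that second-order elliptic operators with Robin/Dirichlet conditions are sectorial in $C([0,1])$ (Stewart, Lunardi). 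The maximum principle, together with $q>0$ and $\min\lambda_0>0$, gives the real-axis bound $\|(\mu-\mathcal{A})^{-1}\|\le 1/\mu$ and $0\in\rho(\mathcal{A})$. Second, I would work with the (possibly non-strongly-continuous at $0$) analytic semigroup in Lunardi's framework. Strong continuity holds on $\overline{D(\mathcal{A})}=\{v\in C([0,1]): v(1)=0\}$. By choosing the lifting to absorb the compatibility defect, or simply by accepting continuity on $(0,\infty)$ in general, the initial datum $v_0=w_0-\phi(\cdot,0)$ is handled. The regularity $C(\mathbb{R}_{\geq 0};\cdot)$ at $t=0$ needs $w_0(1)=d_1(0)$. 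I would either make this implicit compatibility assumption or state continuity at $0$ in the sense of the semigroup.

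With sectoriality in hand, the classical theorem on inhomogeneous abstract Cauchy problems with Hölder-continuous forcing (Pazy, Thm.~4.3.2 / Cor.~4.3.3, or Lunardi, Thm.~4.3.1) gives a unique classical solution
\begin{align*}
v\in C(\mathbb{R}_{\geq0};X)\cap C^1(\mathbb{R}_{>0};X),\qquad v(t)\in D(\mathcal{A})\ \text{for } t>0,
\end{align*}
given by $v(t)=T(t)v_0+\int_0^tT(t-s)\tilde\psi(s)\,\text{d}s$, on every $[0,T]$, and hence on $\mathbb{R}_{\geq 0}$. Setting $w=v+\phi$ recovers the required regularity of $w$, since $\phi\in C^1$ in $t$. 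It also shows that $w(0,\cdot)$ is continuous and, for $t>0$, locally Hölder, which feeds back into the ODE step above. Uniqueness of $(X,w)$ follows from uniqueness of the abstract problem, since the difference of two solutions solves the homogeneous problem, together with uniqueness for the linear ODE.
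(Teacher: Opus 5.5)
Your lifting does not satisfy the Robin condition you claim. With $\phi(z,t)=\frac{(z-1)d_0(t)}{1+q}+d_1(t)$ one gets $\phi_z(0,t)-q\phi(0,t)-d_0(t)=-q\,d_1(t)$. So $v=w-\phi$ obeys $v_z(0,t)=qv(0,t)+q\,d_1(t)$, and the homogeneous semigroup setting does not apply unless $d_1\equiv 0$. The fix is immediate. Take $\phi(z,t)=\frac{(z-1)d_0(t)+(1+qz)d_1(t)}{1+q}$. It is still affine in $z$, so $\phi_{zz}=0$ and your $\tilde\psi=\psi-\phi_t-\lambda_0\phi$ keeps its form. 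Alternatively, use the paper's decoupled choice $\widetilde w=w+g_1+g_2$ with $g_1=(z^2-z)d_0$ and $g_2=(z^3-2z^2)d_1$. Separately, your aside about choosing the lifting to absorb the compatibility defect cannot work, because every admissible lifting has $\phi(1,0)=d_1(0)$. Only the alternatives you list afterwards are viable.

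With that corrected, your argument is sound and differs from the paper's in a substantive way. The paper's steps are these:
\begin{itemize}
\item it keeps $\mathcal A=\partial_{zz}$ and puts $-\lambda_0\widetilde w$ into the affine term $\widetilde F$;
\item it gets the sector resolvent bound from an explicit Green's function;
\item it asserts that $D(\mathcal A)$ is dense in $C([0,1])$, so as to obtain a $C_0$ analytic semigroup via Pazy;
\item it then invokes Pazy's semilinear existence theorems with $X_\alpha=X$, and treats the ODE exactly as you do.
\end{itemize}
You instead include $\lambda_0$ in the generator, get the real-axis bound from the maximum principle, and need only the linear inhomogeneous theory, which is lighter.

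More importantly, your remark that $D(\mathcal A)$ is not dense is correct, and the paper's density claim is not. Evaluation at $z=1$ is continuous in the sup-norm, so the closure of $D(\mathcal A)$ lies in (indeed equals) $\{v\in C([0,1]): v(1)=0\}$. Since the forcing does not vanish at $z=1$ in general, one cannot simply restrict to that subspace. Lunardi's theory of analytic semigroups that are not strongly continuous is what makes this step rigorous. Cite Lunardi rather than Pazy for the inhomogeneous problem, since Pazy assumes a densely defined generator.

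Your compatibility caveat is likewise a genuine restriction on the statement, not a weakness of your proof. Suppose $w(\cdot,t)\to w_0$ uniformly on $(0,1)$ as $t\to0^+$, with $w(\cdot,t)\in C([0,1])$ and $w(1,t)=d_1(t)$. Then $w_0$ must extend continuously to $[0,1]$ with $w_0(1)=d_1(0)$. So what your argument proves, and what is actually true, is this: for every $w_0\in C([0,1])$ there is a unique solution with $w\in C^1(\mathbb{R}_{>0};C([0,1]))$, and it is continuous at $t=0$ exactly when $w_0(1)=d_1(0)$.
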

{To prove this theorem,} we first demonstrate the existence  of a solution to the PDE plant governed by  \eqref{original target equation}-\eqref{w-system-boundary-1}.
We employ  the lifting technique for the proof. Indeed,
letting
\begin{align*}
	g_1(z, t):=& \left(z^2-z\right) d_0(t), \notag\\
	g_2(z, t):=&\left(z^3-2 z^2\right) d_1(t),
\end{align*}
and
\begin{align*}
	\widetilde{w}(z,t):=w(z,t)+g_1(z,t)+g_2(z,t) ,
\end{align*}
we transform system \eqref{original target equation}-\eqref{w-system-boundary-1} into the following system:
%		\begin{align*}
	%	 \widetilde{w_{z z}}=&w_{2 z}+g_{12 z}+g_{2 z z} \\
	%	 \tilde{w}_t=&w_t+g_{1 t}+g_{2 t}=w_{2 z}-\lambda w+4+g_{1 t}+g_{2 t} \\
	%	& =\widetilde{w}_{2 z}-g_{122}-g_{2 z 2}-\lambda \widetilde{\omega}-\lambda g_1-\lambda g_2+4+g_{1 t}+g_{2 t} \\
	%	& \tilde{w}_z(z, t)-q \tilde{w}(z, t)=w_2(z, t)+(2 z-1) d_0(t)+\left(3 z^2-4 z\right) d_1(t) \\
	%	& -9 w(z, t)-9\left(z^2-z\right) d_0(t)-9\left(z^3-2 z^2\right) d_1(t)
	%\end{align*}
	%¼´ $\widetilde{\omega}_2(0, t)-q \tilde{\omega}(0, t)=\omega_2(0, t)-q \omega(0, t)-d_0(t)=0$
	%
	%$$
	%\tilde{w}(1, t)=w(1, t)-d u(t)=0
	%$$
	\begin{subequations}\label{transform-system}
		\begin{align}
			\widetilde{w}_t(z,t)=&\widetilde{w}_{z z}(z,t)-\lambda_0(z) \widetilde{w}(z,t)+\lambda_0(z)\left(g_1(z,t)+g_2(z,t)\right)-\left(g_{1 zz}(z,t)+g_{2 z z}(z,t)\right)\notag\\
			&+g_{1 t}(z,t)+g_{2t}(z,t)+\psi(z,t),  (z,t)\in Q_\infty, \\
			\widetilde{w}_z(0, t)=&q \widetilde{w}(0, t), t\in \mathbb{R}_{>0},\\
			\widetilde{w}(1, t)=&0, t\in \mathbb{R}_{>0},\\
			\widetilde{w}(z, 0)=&	\widetilde{w}_0(z):=w_0(z)+g_1(z,0)+g_2(z,0) , z\in(0,1).
		\end{align}
	\end{subequations}
	For any \begin{align*}
		\widetilde{w}\in D(\mathcal{A}):=\left\{\widetilde w,\widetilde w_z, \widetilde w_{zz}\in C((0,1);\mathbb{R})|\widetilde w_z(0)=q\widetilde{w}(0),\widetilde{w}(1)=0\right\},
	\end{align*}
	let the linear operator $\mathcal{A}$ be given by
	\begin{align*}
		\mathcal{A}\widetilde{w}:=\widetilde{w}_{zz}.
	\end{align*}
It is clear that $0\in {\rho(\mathcal{A})}$.

	For the given functions $g_1$ and $g_2$, let the functional $\widetilde F$ be defined by
	\begin{align*}
		\widetilde F(t,\widetilde w):=&-\lambda_0(z)\widetilde w+\lambda_0(z)(g_1+g_2)-\left(g_{1 zz}+g_{2 z z}\right)+g_{1 t}+g_{2 t}+\psi,\forall t\in \mathbb{R}_{\ge 0}, \widetilde w\in C((0,1);\mathbb{R}).
	\end{align*}
	Then, system \eqref{transform-system} can be written as the following abstract form:
	\begin{subequations}\label{abstract-system}
		\begin{align}
			\dot{\widetilde{w}}=&\mathcal{A}\widetilde{w}+\widetilde F(t,\widetilde w),\\
			\widetilde w(0)=&\widetilde w_0.
		\end{align}
	\end{subequations}
%	It is well known that the operator $\mathcal{A}$ is the infinitesimal generator of a $C_0$-semigroup $S(t)$ on $C([0,1])$ for $t\in \mathbb{R}_{\ge 0}$.

	For system \eqref{abstract-system}, we have the following well-posedness result, which is proved based on the analytic semigroup theory.
	\begin{prpstn}\label{w-solution exsistence}
		System  \eqref{abstract-system} admits a solution {$\widetilde{w}\in C(\mathbb{R}_{\geq0};C((0,1);\mathbb{R}))\cap C^1(\mathbb{R}_{>0}; C((0,1);\mathbb{R}))$.}
	\end{prpstn}
%	Before proceeding to prove the aforementioned proposition, we establish the following Lemma that guarantees the compactness and analyticity of $S(t)$.
%	\begin{lmm}\label{lemma1}
%		Let the operator $A$ be defined as in \eqref{operatorA}. Subsequently, the operator $A$ is  the infinitesimal generator of a compact analytic semigroup $S(t)$ on $C([0,1])$ for $t\in \mathbb{R}_{\ge 0}$.
%	\end{lmm}
	\begin{proof}
		We prove this proposition in a way similar to  the proof of \cite[Lemma 2.1, pp. 234-235]{pazy1983}. {Indeed, we consider the  {argument} in the complex domain.} Let $h \in C((0,1);\mathbb{R})$ and $ \lambda=r e^{i \theta}$ with $ r \in \mathbb{R}_{>0}$  and $\theta \in\left(-\frac{\pi}{2}, \frac{\pi}{2}\right)$. Consider the boundary value problem:
		\begin{subequations}\label{boundary-value}
			\begin{align}
				\lambda^2 u(z)-u^{\prime \prime}(z)=&h(z),z\in(0,1), \\
				u(1)=&0, \\
				u^{\prime}(0)-q u(0)=&0.
			\end{align}
		\end{subequations}
		Let  $G(z, y)$  be the Green's function that satisfies the following conditions over $(0,1)$:
		\begin{align*}
			\lambda^2 G(z, y)-G_{zz}(z, y)=&\delta(z-y),   \\
			G_z(0, y)-q G(0, y)=&0, \\
			G(1, y)=&0,
		\end{align*}
		where $\delta(\cdot)$ is the standard Dirac delta function.
		%ÏÂÃæÇó½â (3)£¬µ± $x=y$ ʱ·½³Ì±äΪÆë´Î΢·Ö·½³Ì£¬Æä¶ÔÓ¦µÄ½â¿ÉдΪ
		%\begin{align*}
		% G(z, y)=\left\{\begin{array}{l}
			%		A(y) e^{\lambda z}+B(y) e^{-\lambda z}, z<y \\
			%		C(y) e^{\lambda z}+D(y) e^{-\lambda z}, z>y
			%	\end{array}\right.
		%\end{align*}
		%\begin{align*}
		%	& G_z(z, y)=\left\{\begin{array}{l}
			%	\lambda A(y) e^{\lambda z}-\lambda B(y) e^{-\lambda z}, z<y \\
			%	\lambda C(y) e^{\lambda z}-\lambda D(y) e^{-\lambda z},z>y
			%\end{array}\right.
			%\end{align*}
			
			By direct computations, we obtain
			\begin{align*}
				G(z, y)=\left\{\begin{array}{l}
					\frac{(\lambda+q)\left(e^{\lambda-\lambda y+\lambda z}-e^{\lambda y-\lambda+\lambda z}\right)+(\lambda-q)\left(e^{\lambda-\lambda y-\lambda z}-e^{\lambda y-\lambda-\lambda z}\right)}{2\lambda(\lambda+q)e^{\lambda}+2\lambda(\lambda-q)e^{-\lambda}}, z<y, \\
					\frac{(\lambda+q)\left(e^{\lambda y+\lambda-\lambda z}-e^{\lambda y-\lambda+\lambda z}\right)+(\lambda-q)\left(e^{-\lambda y+\lambda-\lambda z}-e^{-\lambda y-\lambda+\lambda z}\right)}{2\lambda(\lambda+q)e^{\lambda}+2\lambda(\lambda-q)e^{-\lambda}}, z>y.
				\end{array}\right.
			\end{align*}
			Then, the solution to \eqref{boundary-value} is given by
			\begin{align*}
				u(z)=&\int_0^1 G(z, y) h(y) \text{d} y \\
				=&\int_0^z \frac{(\lambda+q)\left(e^{\lambda y+\lambda-\lambda z}-e^{\lambda y-\lambda+\lambda z}\right)+(\lambda-q)\left(e^{-\lambda y+\lambda-\lambda z}-e^{-\lambda y-\lambda+\lambda z}\right)}{2\lambda(\lambda+q)e^{\lambda}+2\lambda(\lambda-q)e^{-\lambda}}h(y) \text{d} y \\
				& +\int_z^1 \frac{(\lambda+q)\left(e^{\lambda-\lambda y+\lambda z}-e^{\lambda y-\lambda+\lambda z}\right)+(\lambda-q)\left(e^{\lambda-\lambda y-\lambda z}-e^{\lambda y-\lambda-\lambda z}\right)}{2\lambda(\lambda+q)e^{\lambda}+2\lambda(\lambda-q)e^{-\lambda}} h(y)\text{d} y \\
				=&\int_0^1 \frac{(\lambda-q) e^{\lambda-\lambda y-\lambda z}-(\lambda+q) e^{\lambda y-\lambda+\lambda z}}{2 \lambda(\lambda+q) e^\lambda+2 \lambda(\lambda-q) e^{-\lambda}} h(y) \text{d} y
				 +\int_0^z \frac{(\lambda+q) e^{\lambda y+\lambda-\lambda z}-(\lambda-q) e^{-\lambda y-\lambda+\lambda z}}{2 \lambda(\lambda+q) e^\lambda+2 \lambda(\lambda-q) e^{-\lambda}} h(y) \text{d} y\\
				&+\int_z^1 \frac{(\lambda+q) e^{\lambda-\lambda y+\lambda z}-(\lambda-q) e^{\lambda y- \lambda-\lambda z}}{2 \lambda(\lambda+q) e^\lambda+2 \lambda(\lambda-q) e^{-\lambda}} h(y) \text{d} y,\forall z\in(0,1).
			\end{align*}
			It follows that
			\begin{align}\label{|u(z)|}
				|u(z)|\leq &\int_0^1 \left|\frac{(\lambda-q) e^{\lambda-\lambda y-\lambda z}-(\lambda+q) e^{\lambda y-\lambda+\lambda z}}{2 \lambda(\lambda+q) e^\lambda+2 \lambda(\lambda-q) e^{-\lambda}}\right| |h(y)| \text{d} y
				 +\int_0^z  \left|\frac{(\lambda+q) e^{\lambda y+\lambda-\lambda z}-(\lambda-q) e^{-\lambda y-\lambda+\lambda z}}{2 \lambda(\lambda+q) e^\lambda+2 \lambda(\lambda-q) e^{-\lambda}}\right| |h(y)| \text{d} y\notag\\
				&+\int_z^1  \left|\frac{(\lambda+q) e^{\lambda-\lambda y+\lambda z}-(\lambda-q) e^{\lambda y- \lambda-\lambda z}}{2 \lambda(\lambda+q) e^\lambda+2 \lambda(\lambda-q) e^{-\lambda}}\right| |h(y)| \text{d} y,\forall z\in(0,1).
			\end{align}
			We  estimate each term on the right-hand side of \eqref{|u(z)|}.
			First, note that  for a complex number $z:=a+b \mathrm{i}$ with $a, b \in \mathbb{R}$ and $ \mathrm{i}^2 =-1$ we have
			\begin{align*}
				\left|e^z+e^{-z}\right| & =\left|e^a(\cos (b)+\mathrm{i} \sin (b))+e^{-a}(\cos (-b)+\mathrm{i} \sin (-b))\right| \\
				& =\sqrt{\left(e^a \cos b+e^{-a} \cos (-b)\right)^2+\left(e^a \sin (b)+e^{-a} \sin (-b)\right)^2} \\
				& \leq \sqrt{\left(e^a+e^{-a}\right)^2+\left(e^a-e^{-a}\right)^2} \\
				& =\sqrt{2e^{2 a}+2e^{-2 a}}
			\end{align*}
			and
			\begin{align*}
				\left|e^z+e^{-z}\right| & =\left|e^a(\cos (b)+\mathrm{i} \sin (b))+e^{-a}(\cos (-b)+\mathrm{i} \sin (-b))\right| \\
				& =\sqrt{\left(e^a \cos (b)+e^{-a} \cos (-b)\right)^2+\left(e^a \sin (b)+e^{-a} \sin (-b)\right)^2} \\
				& \geq \sqrt{\left(e^a \cos (b)+e^{-a} \cos (-b)\right)^2} \\
				& =\left(e^a+e^{-a}\right) |\cos (b)|.
			\end{align*}
			%Next, we analyze \eqref{|u(z)|} term by term.
			Denote   $\lambda:=\mu+\nu \mathrm{i}$ with $\mu:=r \cos (\theta)>0$. For the first term on the right-hand side of  \eqref{|u(z)|}, we deduce that
			\begin{align}\label{M1}
				& \int_0^1\left|\frac{(\lambda-q) e^{\lambda-\lambda y-\lambda z}-(\lambda+q) e^{\lambda y-\lambda+\lambda z}}{2 \lambda(\lambda+q) e^\lambda+2 \lambda(\lambda-q) e^{-\lambda}}\right||h(y)| \text{d}y \notag\\
				\leq & \frac{\left(\sup_{z\in(0,1)}|h(z)|\right)|\lambda+q|}{2|\lambda|\left|e^\lambda+e^{-\lambda}\right||\lambda-q|} \int_0^1\left| e^{\lambda-\lambda y-\lambda z}+e^{\lambda y-\lambda+\lambda z}\right| \text{d} y\notag\\
				\leq&\frac{\left(\sup_{z\in(0,1)}|h(z)|\right)|\lambda+q|}{2|\lambda|^2 | e^\lambda+e^{-\lambda }||\lambda-q|}\left| e^{-\lambda z}+e^{\lambda-\lambda z}+ e^{\lambda z}+e^{\lambda z-\lambda}\right| \notag\\
				\leq& \frac{\left(\sup_{z\in(0,1)}|h(z)|\right)|\lambda+q|}{2|\lambda|^2 \cos (\theta)\left(e^\mu+e^{-\mu}\right)|\lambda-q|}\left(\sqrt{2 e^{2 \mu z}+2 e^{-2 \mu z}}+\sqrt{2e^{2 \mu(1-z)}+2 e^{-2 \mu(1-z)}}\right) , \forall z\in[0,1].
				% \leq& \frac{\max _{z\in [0,1]}|h(z)|}{2|\lambda| \cos (\theta)\left(e^\mu+e^{-\mu}\right)}\left((1+q)\left(\sqrt{2 e^{2 \mu z}+2 e^{-2 \mu z}}+\sqrt{2e^{2 \mu(1-z)}+2 e^{-2 \mu(1-z)}}\right)\right), \forall z\in[0,1].
			\end{align}
			Note that for all $z\in[0,1]$, we always have
			\begin{align}\label{eq-24}
				\lim _{\mu \rightarrow 0} \frac{\left(\sqrt{2 e^{2 \mu z}+2 e^{-2 \mu z}}+\sqrt{2 e^{2 \mu(1-z)}+2 e^{-2 \mu(1-z)}}\right)\sqrt{(\mu+q)^2+\nu^2}}{(e^\mu+e^{-\mu})\sqrt{(\mu-q)^2+\nu^2}}=&2,
						\end{align}
			Meanwhile, for all $z\in(0,1)$, we have
						\begin{align}\label{eq-25}		
				\lim _{\mu \rightarrow +\infty} \frac{\left(\sqrt{2 e^{2 \mu z}+2 e^{-2 \mu z}}+\sqrt{2 e^{2 \mu(1-z)}+2 e^{-2 \mu(1-z)}}\right)\sqrt{(\mu+q)^2+\nu^2}}{(e^\mu+e^{-\mu})\sqrt{(\mu-q)^2+\nu^2}}=&0,
			\end{align}
			and for $z=0$ and $z=1$, we have
			  			\begin{align}\label{eq-26}			
			  	\lim _{\mu \rightarrow +\infty} \frac{\left(2+\sqrt{2 e^{2 \mu}+2 e^{-2 \mu}}\right)\sqrt{(\mu+q)^2+\nu^2}}{(e^\mu+e^{-\mu})\sqrt{(\mu-q)^2+\nu^2}}=&\sqrt{2}.
			  \end{align}
			By~\eqref{eq-24}, \eqref{eq-25}, and \eqref{eq-26}, we  infer that there exists $M_1 \in \mathbb{R}_{>0}$ s.t.
			\begin{align*}
				\frac{\left(\sqrt{2 e^{2 \mu z}+2 e^{-2 \mu z}}+\sqrt{2 e^{2 \mu(1-z)}+2 e^{-2 \mu(1-z)}}\right)\sqrt{(\mu+q)^2+\nu^2}}{(e^\mu+e^{-\mu})\sqrt{(\mu-q)^2+\nu^2}} \leq M_1, \forall \mu,\nu \in \mathbb{R}_{>0}, z \in[0,1].
			\end{align*}
			which, along with \eqref{M1}, ensures that
			\begin{align}\label{M1-}
				 \int_0^1\left|\frac{(\lambda-q) e^{\lambda-\lambda y-\lambda z}-(\lambda+q) e^{\lambda y-\lambda+\lambda z}}{2 \lambda(\lambda+q) e^\lambda+2 \lambda(\lambda-q) e^{-\lambda}}\right||h(y)| \text{d}y \leq \frac{M_1}{|\lambda|^2\cos(\theta)}\sup_{z\in(0,1)}|h(z)|, \forall z\in[0,1].
			\end{align}
For the second term on the right-hand side of \eqref{|u(z)|}, we deduce that
			\begin{align}\label{M2}
				& \int_0^z\left|\frac{(\lambda+q) e^{\lambda y+\lambda-\lambda z}-(\lambda-q) e^{-\lambda y-\lambda+\lambda z}}{2 \lambda(\lambda+q) e^\lambda+2 \lambda(\lambda-q) e^{-\lambda}}\right||h(y)| \text{d} y \notag\\
				\leq& \frac{\sup_{z\in(0,1)}|h(z)||\lambda+q|}{2\left|\lambda \| e^\lambda+e^{-\lambda}\right||\lambda-q|} \int_0^z\left|e^{\lambda y+\lambda-\lambda z}+ e^{-\lambda y-\lambda+\lambda z}\right| \text{d} y \notag\\
				%=&\frac{\max _{z\in [0,1]}|h(z)||\lambda+q|}{2|\lambda|\left|e^\lambda+e^{-\lambda}|\lambda-q|\right|}\left|\frac{\lambda+q}{\lambda} e^\lambda-\frac{\lambda+q}{\lambda} e^{\lambda-\lambda z}-\frac{\lambda+q}{\lambda} e^{-\lambda}+\frac{\lambda+q}{\lambda} e^{-\lambda+\lambda z}\right| \notag\\
				\leq& \frac{\sup_{z\in(0,1)}|h(z)||\lambda+q|}{2|\lambda|^2 \left| e^\lambda+e^{-\lambda}\right||\lambda-q|}\left|e^\lambda+e^{-\lambda}+e^{\lambda-\lambda z}+e^{-\lambda+\lambda z}\right|\notag\\
				\leq& \frac{\sup_{z\in(0,1)}|h(z)|\sqrt{(\mu+q)^2+\nu^2}}{2|\lambda|^2 \cos (\theta)\left(e^\mu+e^{-\mu}\right)\sqrt{(\mu-q)^2+\nu^2}}\left(\sqrt{2 e^{2 \mu}+2 e^{-2 \mu}}+\sqrt{2 e^{2 \mu(1-z)}+2 e^{-2 \mu(1-z)}}\right), \forall z\in[0,1].
			\end{align}
			Note that for all $z\in[0,1]$, we have
			\begin{align}\label{e-28}
				\lim _{\mu \rightarrow 0} \frac{\left(\sqrt{2 e^{2 \mu}+2 e^{-2 \mu}}+\sqrt{2 e^{2 \mu(1-z)}+2 e^{-2 \mu(1-z)}}\right)\sqrt{(\mu+q)^2+\nu^2}}{(e^\mu+e^{-\mu})\sqrt{(\mu-q)^2+\nu^2}}=&2.
						\end{align}
						Meanwhile,	for all $z\in(0,1]$, we have
							\begin{align}\label{e-29}
				\lim _{\mu \rightarrow +\infty} \frac{\left(\sqrt{2 e^{2 \mu}+2 e^{-2 \mu}}+\sqrt{2 e^{2 \mu(1-z)}+2 e^{-2 \mu( 1-z)}}\right)\sqrt{(\mu+q)^2+\nu^2}}{(e^\mu+e^{-\mu})\sqrt{(\mu-q)^2+\nu^2}}=&\sqrt{2},
			\end{align}
				and	for all $z=0$, we have
			\begin{align}\label{e-30}
				\lim _{\mu \rightarrow +\infty} \frac{2\sqrt{2 e^{2 \mu}+2 e^{-2 \mu}}\sqrt{(\mu+q)^2+\nu^2}}{(e^\mu+e^{-\mu})\sqrt{(\mu-q)^2+\nu^2}}=&2\sqrt{2}.
			\end{align}
			By \eqref{e-28}, \eqref{e-29}, and \eqref{e-30}, we   infer that there exists $M_2 \in \mathbb{R}_{>0}$ s.t.
			\begin{align*}
				\frac{\left(\sqrt{2 e^{2 \mu}+2 e^{-2 \mu}}+\sqrt{2 e^{2 \mu(1-z)}+2 e^{-2 \mu(1-z)}}\right)\sqrt{(\mu+q)^2+\nu^2}}{(e^\mu+e^{-\mu})\sqrt{(\mu-q)^2+\nu^2}}\leq M_2, \forall \mu, \nu\in \mathbb{R}_{>0}, z \in[0,1],
			\end{align*}
			which, along with \eqref{M2}, implies that
			\begin{align}\label{M2-}
				 \int_0^z\left|\frac{(\lambda+q) e^{\lambda y+\lambda-\lambda z}-(\lambda-q) e^{-\lambda y-\lambda+\lambda z}}{2 \lambda(\lambda+q) e^\lambda+2 \lambda(\lambda-q) e^{-\lambda}}\right||h(y)| \text{d} y\leq \frac{M_2}{|\lambda|^2\cos(\theta)}\sup_{z\in(0,1)}|h(z)|, \forall z\in[0,1].
			\end{align}
Analogously, for the third term on the right-hand side of \eqref{|u(z)|}, we deduce that
			\begin{align}\label{M3}
				 &\int_z^1\left|\frac{(\lambda+q) e^{\lambda-\lambda y+\lambda z}-(\lambda-q) e^{\lambda y-\lambda-\lambda z}}{2 \lambda(\lambda+q) e^\lambda+2 \lambda(\lambda-q) e^{-\lambda}}\right||h(y)| \text{d} y \notag\\
				\leq& \frac{\sup_{z\in(0,1)}|h(z)||\lambda+q|}{2 \lambda| | e^\lambda+e^{-\lambda}||\lambda-q|} \int_z^1\left| e^{\lambda-\lambda y+\lambda z}+ e^{\lambda y-\lambda-\lambda z}\right| \text{d} y  \notag\\
				\leq&\frac{\sup_{z\in(0,1)}|h(z)||\lambda+q|}{2|\lambda|^2 | e^\lambda+e^{-\lambda}||\lambda-q|} \left|e^{\lambda z}+e^\lambda+ e^{-\lambda z}+ e^{-\lambda}\right|  \notag\\
				% \leq& \frac{\max _{z\in [0,1]}|h(z)|}{2|\lambda|\left|e^\lambda+e^{-\lambda}\right|}\left((1+q)\left|e^{\lambda z}+e^\lambda+e^{-\lambda z}+e^{-\lambda}\right|\right)  \notag\\
				\leq& \frac{\sup_{z\in(0,1)}|h(z)|\sqrt{(\mu+q)^2+\nu^2}}{2|\lambda|^2 \cos (\theta)\left(e^\mu+e^{-\mu}\right)\sqrt{(\mu-q)^2+\nu^2}}\left(\sqrt{2 e^{2 \mu z}+2 e^{-2 \mu z}}+\sqrt{2 e^{2 \mu}+2 e^{-2 \mu}}\right), \forall z\in[0,1].
			\end{align}
			Note that for all $z\in[0,1]$, we have
			\begin{align}\label{eq-32}
				\lim _{\mu \rightarrow 0} \frac{\left(\sqrt{2 e^{2 \mu z}+2 e^{-2 \mu z}}+\sqrt{2 e^{2 \mu}+2 e^{-2 \mu}}\right)\sqrt{(\mu+q)^2+\nu^2}}{(e^\mu+e^{-\mu})\sqrt{(\mu-q)^2+\nu^2}}=&2.
						\end{align}
						At the same time, for all $z\in[0,1)$, we always have
							\begin{align}	\label{eq-33}
				\lim _{\mu \rightarrow +\infty} \frac{\left(\sqrt{2 e^{2 \mu z}+2 e^{-2 \mu z}}+\sqrt{2 e^{2 \mu}+2 e^{-2 \mu}}\right)\sqrt{(\mu+q)^2+\nu^2}}{(e^\mu+e^{-\mu})\sqrt{(\mu-q)^2+\nu^2}}=&\sqrt{2},
			\end{align}
				and for all $z=1$, we   have
			\begin{align}	\label{eq-34}
				\lim _{\mu \rightarrow +\infty} \frac{2\sqrt{2 e^{2 \mu z}+2 e^{-2 \mu z}}\sqrt{(\mu+q)^2+\nu^2}}{(e^\mu+e^{-\mu})\sqrt{(\mu-q)^2+\nu^2}}=&2\sqrt{2}.
			\end{align}
			By \eqref{eq-32}, \eqref{eq-33}, and \eqref{eq-34}, we   infer that there exists $M_3 \in \mathbb{R}_{>0}$ s.t.
			\begin{align*}
				\frac{\left(\sqrt{2 e^{2 \mu z}+2 e^{-2 \mu z}}+\sqrt{2 e^{2 \mu}+2 e^{-2 \mu}}\right)\sqrt{(\mu+q)^2+\nu^2}}{(e^\mu+e^{-\mu})\sqrt{(\mu-q)^2+\nu^2}} \leq M_3 , \forall \mu,\nu \in \mathbb{R}_{>0}, z \in[0,1],
			\end{align*}
				which, along with \eqref{M3}, implies that
				\begin{align}\label{M3-}
					\int_z^1\left|\frac{(\lambda+q) e^{\lambda-\lambda y+\lambda z}-(\lambda-q) e^{\lambda y-\lambda-\lambda z}}{2 \lambda(\lambda+q) e^\lambda+2 \lambda(\lambda-q) e^{-\lambda}}\right||h(y)| \text{d} y\leq \frac{M_3}{|\lambda|^2\cos(\theta)}\sup_{z\in(0,1)}|h(z)|, \forall z\in[0,1].
					\end{align}
			
Substituting \eqref{M1-}, \eqref{M2-}, and \eqref{M3-} into \eqref{|u(z)|}, we obtain
			\begin{align*}
				|u(z)| \leq \frac{M_1+M_2+M_3}{2 |\lambda|^2 \cos (\theta)}\sup_{z\in(0,1)}|h(z)|, \forall z\in[0,1].%\label{Equ.42}
			\end{align*}
In particular, it holds that
\begin{align}
				\sup_{z\in(0,1)}|u(z)| \leq \frac{M}{|\lambda|}\sup_{z\in(0,1)}|h(z)|,\label{Equ.42}
			\end{align}
where $M=\frac{M_1+M_2+M_3}{2 \cos (\theta_0)}$.

  For any fixed $\theta_0 \in\left(\frac{\pi}{4},\frac{\pi}{2}\right)$, we define
			\begin{align*}
				\Sigma\left(\theta_0\right):=\left\{\lambda:|\arg \lambda|<2 \theta_0\right\} \subset {\rho(\mathcal{A})}.
			\end{align*}
		It follows from \eqref{Equ.42} that
			\begin{align*}
				&\|R(\lambda: \mathcal{A})\| \leq \frac{M}{|\lambda|},\forall \lambda \in \Sigma\left(\theta_0\right).
			\end{align*}

			Note that $D(\mathcal{A})$ is dense in $C([0,1];\mathbb{R})$. From \cite[Theorem 7.7, p. 30]{pazy1983}, we infer that $\mathcal{A}$ is the infinitesimal generator of a $C_0$-semigroup $S(t)$  satisfying
			\begin{align*}
	\|S(t)\|\leq c_0,\forall t\in \mathbb{R}_{\ge 0}
			\end{align*}
			with some positive constant $c_0$.
			Furthermore, we   deduce from \cite[Theorem 5.2, p. 61]{pazy1983} that the semigroup $S(t)(t\in\mathbb{R}_{\ge 0})$ is analytic.
			
		%	Finally, accdording to \cite[Lemma 2.1, pp. 234-235]{pazy1983}, it can be shown that the semigroup $S(t)(t\in\mathbb{R}_{\ge 0})$ is compact.
			Finally, noting that  the results of \cite[Theorem 3.1 and 3.3, pp. 196-199]{pazy1983} remains true for $X_{\alpha}:=X$, we deduce that \eqref{abstract-system} admits a unique  global  solution {$\widetilde{w}\in C(\mathbb{R}_{\geq0};C((0,1);\mathbb{R}))\cap C^1(\mathbb{R}_{>0}; C((0,1);\mathbb{R}))$} for any $T>0$.
		Thus, the proof is completed.	
		\end{proof}

%		\begin{pf4}
%			For any given $g_1$ and $g_2$, it is evident that
%			\begin{align}\label{F}
%				\max_{z\in[0,1]}\left|\widetilde F(t,\widetilde w)\right|\le \lambda_0 	\max_{z\in[0,1]}|\widetilde w|+\max_{z\in[0,1]} |k_2|,\forall t\in \mathbb{R}_{\ge 0},\widetilde w\in C([0,1]),
%			\end{align}
%			where $k_2:=\lambda_0(g_1+g_2)-\left(g_{1 zz}+g_{2 z z}\right)+g_{1 t}+g_{2 t}+\psi$.
%			
%			By \eqref{F} and Lemma \Rref{lemma1}, we deduce form \cite[Corollary 2.3, p. 194]{pazy1983} that system  \eqref{original target equation}-\eqref{w-system-boundary-1}  admits a solution $\widetilde{w}\in C(\mathbb{R}_{\ge 0};C([0,1]))$. Thus, the proof is complete.
%		\end{pf4}
		 \begin{pf5}
             According to the definition of $\widetilde{w}$ and Proposition \ref{w-solution exsistence},   we deduce that the PDE plant of system \eqref{original target system} admits a unique solution $w \in C(\mathbb{R}_{\geq0};C((0,1);\mathbb{R}))\cap C^1(\mathbb{R}_{>0}; C((0,1);\mathbb{R}))$. Moreover,  \cite[Theorem 2.4.1  p.69]{boyce2021} ensures  that the ODE plant of  system~\eqref{original target system} admits a unique solution $X\in C^1(\mathbb{R}_{>0};\mathbb{R}^N)$.
        \end{pf5}
     Note that a consequence of  Proposition \ref{equivelent} and Theorem \ref{solution exsistence} is that the original cascaded ODE-PDE system~\eqref{original system} with  the state feedback control law \eqref{state controller} admits a unique  solution $(X,u) {\in} C^1  (\mathbb{R}_{>0};  \mathbb{R}^N)  \times  ( C(\mathbb{R}_{\geq0};C((0,1);\mathbb{R}))\cap C^1(\mathbb{R}_{>0}; C((0,1);\mathbb{R})))$.

\section{Stability Assessment}\label{sec:input-to-state stability}
In this section, to overcome   the obstacle brought by the Dirichlet boundary disturbances to stability assessment, we employ the generalized Lyapunov method    to establish the  ISS of the closed-loop system~\eqref{original system}. This result is formalized in the following theorem.
\begin{thrm}\label{original main result}
	The cascaded ODE-PDE system~\eqref{original system} with  the state feedback control law \eqref{state controller}
	%admits a unique  solution $(X,u) {\in}{C\left([0,T];\mathbb{R}^n\right)}\times C(\overline{Q}_T)$ and
	is {\rm ISS} in the $\sup$-norm, having the estimate %namely, for any initial state $(X_0,u_0)\in  \mathbb{R}^N \times C((0,1);\mathbb{R})$, it holds that
	\begin{align*}
		| X(T)|+\sup_{z\in(0,1)}\left|u(z,T)\right| \leq& C_0 \left(e^{-\zeta T}\left(|X_0|+\sup_{z\in (0,1)}\left|u_0(z)\right|\right)+\sup_{t\in(0,T)}\left|{D}(t)\right|\right. \\
		& \left.+\sup_{(z, t) \in {Q}_T}\left|f(z,t)\right|+\sup_{t\in(0,T)}\left|d_0(t)\right|+\sup_{t\in(0,T)}\left|d_1(t)\right|\right),\forall T\in\mathbb{R}_{>0},
	\end{align*}
	%	or
	%	\begin{align*}
		%		| X(T)| +\|u[T]\|_{L^\infty(0,T)}
		%		\leq & C \left(e^{-\zeta T}\left(\left|X_0\right|+\left\|u_0\right\|_{L^\infty(0,1)}\right)+\left\|d_1\right\|_{L^\infty(0,T)}\right. \\
		%		& \left.+\left\|f\right\|_{L^\infty((0,1);L^\infty(0,T))}+\left\|d_0\right\|_{L^\infty(0,T)}+\left\|d_1\right\|_{L^\infty(0,T)}\right),
		%	\end{align*}
	where $\zeta$  and $C_0$ are  positive constants, which are independent of disturbances $D, f, d_0, d_1$ and initial datum $(X_0,u_0)$.
\end{thrm}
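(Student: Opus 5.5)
The plan is to prove the estimate first for the target system \eqref{original target system} and then transfer it to \eqref{original system} through Proposition~\ref{equivelent}. The target system is decoupled: the $w$-equation does not involve $X$ (since $\psi$ contains only $f,d_0,D$), and $w(0,t)$ enters the ODE only as an input. So the argument runs in three steps: an ISS estimate in the $\sup$-norm for the $w$-subsystem, then an estimate for the $X$-subsystem, then the return to $(X,u)$.

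\textbf{Step 1: the $w$-subsystem.} I would use a generalized Lyapunov functional in the spirit of \cite{Zheng2024,Zheng2025}, namely the comparison/maximum-principle form of the sup-norm:
\[
V(t)=\sup_{z\in[0,1]}\frac{|w(z,t)|}{\varphi(z)},
\]
with a positive weight $\varphi\in C^2([0,1])$ chosen so that
\[
\varphi''-\lambda_0\varphi\le -\sigma\varphi,\qquad \varphi'(0)-q\varphi(0)\le 0,
\]
for some $\sigma>0$. A suitable candidate is $\varphi(z)=1+\varepsilon(1-z)^2$ or simply $\varphi\equiv1$; the latter already works because $\min\lambda_0>0$ and $q>0$ give $\varphi'(0)-q\varphi(0)=-q<0$.

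Next set
\[
\overline w(z,t)=\Big(e^{-\sigma t}\sup|w_0|+\sup_{(0,T)}|d_1|+\tfrac1\sigma\sup_{Q_T}|\psi|+\tfrac1q\sup_{(0,T)}|d_0|\Big)\varphi(z).
\]
I would verify that $\overline w\mp w$ satisfies $\partial_t-\partial_{zz}+\lambda_0\ge0$ in $Q_T$, is $\ge0$ at $z=1$ and at $t=0$, and obeys the Robin inequality $(\cdot)_z-q(\cdot)\le0$ at $z=0$. The maximum principle then gives
\[
\sup_z|w(z,T)|\le C\big(e^{-\sigma T}\sup|w_0|+\sup|d_1|+\sup|d_0|+\sup|\psi|\big).
\]
To make this rigorous with only the regularity provided by Theorem~\ref{solution exsistence}, I would argue at a first positive maximum of $w-\overline w$ (or of $e^{-\delta t}(w-\overline w)$), ruling out the boundary $z=0$ via the Robin condition and $q>0$. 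This is the main obstacle. The solution is only $C^1$ in time with values in $C$, and $w_0$ need not match $d_1(0)$ at $z=1$. I would handle this either by using the lifted function $\widetilde w$ and its spatial regularity from $D(\mathcal A)$, or by approximating the data. Finally, $\sup|\psi|\le C(\sup|f|+\sup|d_0|+\sup|D|)$, since $k$ and $\alpha$ are bounded by Proposition~\ref{kernel exitence}.

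\textbf{Step 2: the $X$-subsystem.} Variation of constants with $A+BK$ Hurwitz, $\|e^{(A+BK)t}\|\le ce^{-\eta t}$, gives
\[
|X(T)|\le ce^{-\eta T}|X_0|+C\sup_{t\le T}\big(|w(0,t)|+|D(t)|\big).
\]
This bound needs $\sup_{t\le T}|w(0,t)|$, not just $|w(0,T)|$. Step 1 supplies it through the pointwise bound $|w(0,t)|\le C(e^{-\sigma t}\sup|w_0|+\text{disturbances})$. To keep the decay in the initial data rather than only a sup, I would split the convolution integral at $T/2$, which yields a rate $\zeta<\min(\eta,\sigma)/2$.

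\textbf{Step 3: back to $(X,u)$.} By \eqref{transformation4b}, $\sup|w_0|\le C(\sup|u_0|+|X_0|)$. By Proposition~\ref{equivelent}, $\sup_z|u(z,T)|\le C(\sup_z|w(z,T)|+|X(T)|)$. Combining the three steps gives the stated estimate with constants $C_0$ and $\zeta$ that are independent of the data.
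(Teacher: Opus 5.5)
Your argument is correct and has the same architecture as the paper. You prove sup-norm ISS for the decoupled $w$-subsystem, then handle the ODE driven by $w(0,t)$, then transfer to $(X,u)$ via the transformation and its bounded inverse from Proposition~\ref{equivelent}; your Step~3 writes out what the paper calls ``standard''. The difference lies in how the two core estimates are proved.

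\textbf{PDE step.} The paper never locates an extremal point. It rescales $\phi=e^{\sigma t}w$ and shows that the truncated functional $\int_0^1 G(\phi-\mathcal H)\,dz$, with $G'(a)=(\max\{a,0\})^r$, is nonincreasing. This uses only integration by parts and sign information:
\begin{enumerate}
\item the term at $z=1$ vanishes because $\phi(1,t)\le\mathcal H$;
\item the Robin term at $z=0$ is nonpositive by the choice of $\mathcal H$ and $q>0$;
\item the reaction term is nonpositive because $\min\lambda_0>\sigma$.
\end{enumerate}
This Stampacchia-type ``generalized Lyapunov'' argument needs no pointwise second derivatives at a maximum, so it adapts to weak solutions. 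Your barrier $\overline w$ with $\varphi\equiv 1$ gives the same kind of bound, with a different constant in front of $\sup|\psi|$, through the classical pointwise maximum principle. Your route is more elementary, and the weight $\varphi$ leaves room for cases where a constant barrier is not a supersolution. However, it needs a solution continuous up to the parabolic boundary and $C^{2,1}$ inside. That is why the corner incompatibility $w_0(1)\neq d_1(0)$ you flag becomes a real issue for you; the paper sidesteps it with a density argument of comparable rigor.

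\textbf{ODE step.} The paper uses $V=X^\top PX$ and Gronwall, which produces the factor $(c_2-2\sigma_0)^{-1}$ and so tacitly needs $c_2\neq 2\sigma_0$. Your variation-of-constants estimate, split at $T/2$, avoids this resonance at the price of a smaller decay rate. That cost is harmless, since the theorem only asks for some $\zeta>0$.
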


%ÎÒÃÇÖ÷ÒªÀûÓùãÒåLyapunov·½·¨Ö¤Ã÷ÉÏÊöÖ÷Òª½áÂÛ, ΪÁ˸üºÃµÄչʾ¹ãÒåLyapunov·½·¨, ÎÒÃǸø³ö¸ü¼Ó¾ßÓÐÒ»°ãÐԵĽáÂÛ¡£¶ÔÓÚÒ»ÀຬÓÐʱ¿Õ±äϵÊýµÄÅ×Îïϵͳ:

{To  illustrate the universal applicability  of  the generalized Lyapunov method, we conduct  the stability analysis for the case of PDE system \eqref{original target equation}-\eqref{original target initial value} with only spatially dependent coefficients} in a generic case. More specially, we establish the ISS  for a parabolic  equation with Dirichlet boundary disturbances and space-time-varying coefficients:
	\begin{subequations}\label{v-system}
	\begin{align}
		v_t(z, t)=&v_{zz}(z, t)-b_0(z,t) v(z, t)+ {h(z, t)}, (z,t)\in Q_\infty,\label{v equation}\\
		v_z(0, t)=&{q_0} v(0, t)+ {{h}_{0}(t)},  t\in \mathbb{R}_{>0},  \label{v-system-boundary-0}\\
		v(1, t)=& {h_{1}(t)},  t\in \mathbb{R}_{>0}, \label{v-system-boundary-1}\\
		v(z, 0)=&v_0(z), z\in(0,1), \label{v initial value}
	\end{align}
		\end{subequations}
where $b_0(z,t)$ is a space-time-varying function, $q_0\in\mathbb{R}_{>0}$ is a constant,   {{$h$}} represents in-domain disturbance, and  {${{h_0}}$, ${{h_1}}$} represent boundary disturbances. In this section, we always assume that $b_0,{{h}},{{h_0}},{{h_1}},v_0$ are continuous functions and
\begin{align*}
 \underline b:=\inf_{(z,t)\in Q_\infty}b_0(z,t)>0.
\end{align*}
%Similar to Proposition \ref{w-solution exsistence}, for any $T\in\mathbb{R}_{>0}$,
%system \eqref{v-system} admits a unique solution $v\in C(\overline Q_T)$.

For system \eqref{v-system}, we have the following proposition.
\begin{prpstn}\label{method-v}
 %For any $T\in\mathbb{R}_{>0}$,
 System \eqref{v-system}
 %admits a unique solution $v\in C(\overline Q_T)$ and
 is ISS in the $\sup$-norm, having the estimate
		\begin{align*}
		\sup_{z\in(0,1)}\left|{v}(z,T)\right| \leq C\left(e^{-\sigma T}\sup_{z\in(0,1)}\left|{v}_0(z)\right|+\sup_{(z,t)\in Q_T}\left|{{h}}(z,t)\right|+\sup_{t\in(0,T)}\left|{{h_0}}(t)\right|+\sup_{t\in(0,T)}\left|{{h_1}}(t)\right|\right),\forall T\in\mathbb{R}_{>0},
	\end{align*}
		where ${\sigma}\in (0,\underline b)$  and $C$  are positive constants, which are independent of the disturbances ${{h}},{{h_0}}, {{h_1}}$ and initial datum $v_0$.
\end{prpstn}
\begin{proof}
 By virtue of the standard  density argument  \cite{Dashkovskiy2013}, without loss of generality, we may assume that  ${{h}}\in C^1(\overline Q_T;\mathbb{R})$, ${{h_0}}$, ${{h_1}}\in C^2([0,T];\mathbb{R})$ for any $T\in\mathbb{R}_{>0}$ and   $v_0\in D(\mathcal{A})$. Then, system \eqref{v-system} admits a classical solution $v \in C^2(Q_T)$. Therefore, the derivations  of $v$ w.r.t. $z$ and $t$ are   well-defined over $Q_T$.  Now, we apply  the generalized Lyapunov method to prove the ISS of   system~\eqref{v-system}. The proof consists of {four} main steps.
	
	\textbf{Step 1}: Define truncation functions.
	%And then using the equivenlent relationship between~\eqref{original system} and~\eqref{original target system}, we get ISS estimate of the close-loop system~\eqref{original system}.
	For an  arbitrary constant $r>1$, define truncation functions
	\begin{align}\label{gG}
		g(a ):= \begin{cases}a^r, & a \geq 0 \\
			0, & a <0\end{cases}\ \text{and}\
		G(a ):=\int_0^a  g(\tau) \mathrm{d} \tau,
	\end{align}
	which satisfy
	\begin{subequations}
		\begin{align}
			g(a )  \geq& 0, g^{\prime}(a) \geq 0, G(a) \geq 0,  \forall a  \in \mathbb{R},\label{gG1} \\
			g(a ) =&G(a )=0, \forall a \in \mathbb{R}_{\leq 0}.\label{gG2}
			%G(\theta ) =&\frac{1}{r+1} g(\theta ) \theta , \forall\theta  \in \mathbb{R} .\label{gG3}
		\end{align}
	\end{subequations}
	
	\textbf{Step 2:} Prove the upper bound of ${v}$.
	%We choose $\widetilde{\sigma}>0$ such that $ \underline {c}-\widetilde{\sigma}>0$
	For any ${\sigma}\in (0,\underline b)$, let
	\begin{align*}
		{{\phi}(z, t):=}e^{{\sigma} t} {v}(z, t),{\phi}_0(z):={v}_0(z), 
		\check{h}(z, t):= e^{{\sigma} t} {h}(z, t),
		{\check{{h}}_0(t):=} e^{{\sigma} t} {{h}_{0}}(t), {\check{h}_{1}}(t):= e^{{\sigma} t} {h}_{1}(t).
	\end{align*}
	For any $ T\in\mathbb{R}_{>0}$, it is easy to see that
	%\begin{subequations}\label{transferred error system}
	\begin{align*}
		{\phi}_t(z, t)=&{\phi}_{zz}(z, t)+({\sigma}-b_0(z,t)) {\phi}(z, t)+\check{h}(z, t),(z,t)\in Q_T, \\ %\label{29b}
		{\phi}_z(0, t)=&q_0 {\phi}(0, t)+\check{h}_{0}(t),t\in(0,T),\\
		{\phi}(1, t)=& \check{h}_{1}(t),t\in(0,T),\\
		\phi(z,0)=&\phi_0(z),z\in(0,1).
	\end{align*}
	%\end{subequations}
	
	Let
	\begin{align*}
		{\mathcal{H}}:=&\sup_{z\in (0,1)}\left|{\phi}_0(z)\right|+\frac {1}{\underline b-{\sigma}}\sup_{(z, t) \in {Q}_T}\left|\check{h}(z,t)\right|+\frac{1}{{q_0}}\sup_{t\in(0,T)}\left|\check {h}_{0}(t)\right|+\sup_{t\in(0,T)}\left|\check{h}_{1}(t)\right|.
	\end{align*}
	%	\begin{align*}
		%		{\mathcal{H}}:=&\max_{z\in [0,1]}\left|{\phi}_0(z)\right|+\frac{1}{{q}}\max_{t\in[0,T]}\left|\check {d}_0(t)\right|+\left\|\check{d}_1\right\|_{L^{\infty}(0, T)}  +\frac {1}{\lambda_0-{\sigma}}\left\|\check{\psi}\right\|_{L^{\infty}\left((0,T);L^\infty(0,1)\right)}.
		%	\end{align*}
	Note that
	\begin{align}\label{phi le hat k}
		{\phi}(1,t)\leq{\mathcal{H}},\forall t\in(0,T).
	\end{align}

	By the definitions of $g$ and $G$  and integrating by parts, we get
	\begin{align}\label{widetildephi}
		\frac{\text{d}}{\text{d} t} \int_0^1 G\left({\phi}-{\mathcal{H}}\right) \text{d} z
		=&\int_0^1 g\left({\phi}-{\mathcal{H}}\right) {\phi}_t \text{d} z \notag\\
		=&\int_0^1 g\left({\phi}-{\mathcal{H}}\right)\left({\phi}_{zz} +({\sigma}- b_0){\phi} +\check{h} \right)\text{d} z\notag\\
		%=&\int_0^1 g\left({\phi}-\check{\mathcal{D}}\right) {\phi}_{zz}(z,t) \text{d} x\notag\\
		%&+\int_0^1 g\left({v}-\check{\mathcal{D}}\right) \left(({\sigma}- c(t)){v}(z,t)+\check{\psi}(z,t)\right)\text{d}x \notag\\
		% =&\left.g\left({v}-\check{\mathcal{D}}\right) {v}_x\right|_{x=0} ^{x=1}-\int_0^1 g^{\prime}\left({v}-\check{\mathcal{D}}\right) {v}_x^2(z,t) \text{d} x \notag\\
		%& +\int_0^1 g\left({v}-\check{\mathcal{D}}\right)\left(({\sigma}- c(t)) {v}(z,t)+\check{\psi}(z,t)\right) \text{d} x \notag\\
		% = &g\left({v}(1, t)-\check{\mathcal{D}}\right) {v}_x(1, t)-g\left({v}(0, t)-\check{\mathcal{D}}\right) {v}_x(0, t) \notag\\
		%&-\int_0^1 g^{\prime}\left({v}-\check{\mathcal{D}}\right) {v}_x^2(z,t) \text{d} x\notag\\
		%&+ \int_0^1 g\left({v}-\check{\mathcal{D}}\right)\left(({\sigma}-c(t)) {v}(z,t)+\check{\psi}(z,t)\right) \text{d} x \notag\\
		= &g\left({\phi}(1, t)-{\mathcal{H}}\right) {\phi}_z(1, t) -g\left({\phi}(0, t)-{\mathcal{H}}\right)\left({q_0}{\phi}(0, t)+\check{h}_{0}(t)\right) \notag\\
		&-\int_0^1 g^{\prime}\left({\phi}-{\mathcal{H}}\right) {\phi}_z^2  \text{d} z+ \int_0^1 g\left({\phi}-{\mathcal{H}}\right)\left(({\sigma}- b_0) {\phi} +\check{h} \right) \text{d} z \notag\\
		:=&I_1(t)+I_2(t)+I_3(t)+I_4(t),\forall t\in (0,T),
	\end{align}
	where
	\begin{align*}
		I_1(t):=&g\left({\phi}(1, t)-{\mathcal{H}}\right) {\phi}_z(1, t),\\
		I_2(t):=&-g\left({\phi}(0, t)-{\mathcal{H}}\right)\left({q_0} {\phi}(0, t)+\check{h}_{0}(t)\right),\\
		I_3(t):=&-\int_0^1 g^{\prime}\left({\phi}-{\mathcal{H}}\right) {\phi}_z^2\text{d} z,\\
		I_4(t):=&\int_0^1 g\left({\phi}-{\mathcal{H}}\right)\left(({\sigma}-b_0) {\phi}+\check{h}\right) \text{d} z.
	\end{align*}

	For $I_1(t)$, we deduce from~\eqref{gG2} and~\eqref{phi le hat k}  that $g\left({\phi}(1,t)-{\mathcal{H}}\right)=0$ for all $ t\in (0,T)$. Thus, $I_1(t)=0$ in $(0,T)$.

	For $I_2(t)$, we claim  that $I_2(t)\leq 0$ in $(0,T)$. Indeed, when ${\phi}(0,t)\leq{\mathcal{H}}$ for some  $t\in(0,T)$,  we obtain from~\eqref{gG2}  that $g\left({\phi}(0,t)-{\mathcal{H}}\right)={0}$  for this $t\in (0,T)$ and hence, $I_2(t)=0$  for this $t\in (0,T)$. When ${\phi}(0,t)> {\mathcal{H}}$   for some   $t\in (0,T)$, by the definition of ${\mathcal{H}}$ and {$q_0>0$}, we have ${\phi}(0,t)\geq  \frac{1}{{q_0}} \left|\check{h}_{0}(t)\right|$ for such $t\in (0,T)$. Therefore, for this $t\in (0,T)$, it holds that
	\begin{align*}
		{q_0} {\phi}(0,t)+\check{h}_{0}(t)\geq |\check{h}_{0}(t)|+ \check{h}_{0}(t) \geq 0,
	\end{align*}
	which, along with \eqref{gG1}, ensures that $I_2(t)\leq 0$ for such  $t\in (0,T)$.

	For $I_3(t)$, by \eqref{gG1}, we get $I_3(t)\leq 0$ in $(0,T)$.

	For $I_4(t)$, we claim that $I_4(t)\leq 0$ in $(0,T)$. Indeed, when ${\phi}(z,t)\leq{\mathcal{H}}$  for some  $(z,t)\in Q_T$, we obtain by \eqref{gG2} that $g\left({\phi(z,t)}-{\mathcal{H}}\right)=0$  for such $(z,t)\in Q_T$  and hence, $I_4(t)=0$ for  the same $t\in (0,T)$. When ${\phi}(z,t) > {\mathcal{H}}$ for some $(z,t)\in Q_T$, by the definition of ${\mathcal{H}}$ and $\underline b-{\sigma}> 0$, we have $\phi(z,t)\geq \frac{1}{\underline b-{\sigma}}\left|\check{h}(z,t)\right| $ for such $(z,t)\in Q_T$, which leads to
	\begin{align*}
		(b_0-{\sigma}) {\phi}(z,t) -\check{h}(z,t)\geq(\underline b-{\sigma}) {\phi}(z,t) -\check{h}(z,t)\geq\left|\check{h}(z,t)\right| - \check{h}(z,t)  \geq 0 ,
		%(\widetilde{\sigma}-\lambda_0) \tilde{\phi}(z, t)+&\hat{f}(z, t) \leq 0.
	\end{align*}
	or, equivalently,
	\begin{align*}
		(\sigma-b_0) \phi(z,t)+\check{h}(z,t) \leq 0,
	\end{align*}
for this $(z,t)\in Q_T$.	This, along with \eqref{gG1}, implies that $I_4(t)\leq 0$ for the same $t\in (0,T)$.
	
	Since we have obtained $I_i(t)\le 0$ for $i=1,2,3,4$, we infer from \eqref{widetildephi} that
	\begin{align*}
		\frac{\text{d}}{\text{d} t} \int_0^1 G\left({\phi}-{\mathcal{H}}\right) \text{d} z \leq 0,\forall t\in (0,T).
	\end{align*}
	Therefore,   it holds that
	\begin{align*}
		\int_0^1G\left({\phi}(z,T)-{\mathcal{H}}\right)\text{d}z \leq \int_0^1G\left({\phi}_0(z)-{\mathcal{H}}\right)\text{d}z.
	\end{align*}
	This, along with the continuity of ${\phi}$, implies that
	\begin{align*}
		{\phi}(z,T)\leq{\mathcal{H}},\forall z\in(0,1).
	\end{align*}
	Then, we have
	\begin{align}\label{upper}
		{v}(z, T) =&e^{-{\sigma} T} {\phi}(z, T)  \notag\\
		%\leq& e^{-{\sigma} T} \check{\mathcal{D}}  \notag\\
		\leq &e^{-{\sigma} T}
		\left(\sup_{z\in (0,1)}\left|{\phi}_0(z)\right| +\frac {1}{\underline b-{\sigma}}\sup_{(z, t) \in {Q}_T}\left|\check{h}(z,t)\right|+\frac{1}{{q_0}}\sup_{t\in(0,T)}\left|\check {h}_{0}(t)\right|+\sup_{t\in(0,T)}\left|\check{h}_{1}(t)\right|\right)\notag\\ %\left(\left\|{\phi}_0\right\|_{L^{\infty}(0,1)}+\frac{1}{{q}}\left\|\check{d}_0\right\|_{L^{\infty}(0, T)}+\left\|\check{d}_4\right\|_{L^{\infty}(0, T)}+\frac{1}{\lambda_0-{\sigma}}\left\|\check{f}\right\|_{L^{\infty}\left((0,T);L^\infty(0,1)\right)}\right)  \notag\\
		%\leq& e^{-{\sigma} T} \max \left\{ \left\|\widetilde{w}_0\right\|_{L^{\infty}(0,1)},\frac{e^{{\sigma} T}}{q}\left\|\widetilde{d}_0\right\|_{L^{\infty}(0, T)}, \right.  \notag\\
		%& e^{{\sigma} T}\left\|\widetilde{d}_1\right\|_{L^{\infty}(0, T)},\left.\frac{e^{{\sigma} T}}{c_{2}}\left\|\widetilde{\psi}\right\|_{L^{\infty}\left((0,T);L^\infty(0,1)\right)}\right\} \notag\\
		%=&\max \left\{ e^{-\sigma T}\left\|\widetilde{w}_0\right\|_{L^{\infty}(0,1)},\frac{1}{b}\left\|\widetilde{d}_0\right\|_{L^{\infty}(0, T)}, \right. \notag\\
		%& \left.\left\|\widetilde{d}_1\right\|_{L^{\infty}(0, T)},\frac{1}{c_{2}}\left\|\widetilde{\psi}\right\|_{L^{\infty}\left((0,T);L^\infty(0,1)\right)}\right\}\notag\\
		\leq& e^{-\sigma T}\sup_{z\in(0,1)}\left|{w}_0(z)\right|+\frac{1}{\underline b-{\sigma}}\sup_{(z,t)\in Q_T}\left|{h(z,t)}\right|+\frac{1}{{q_0}}\sup_{t\in(0,T)}\left|{h}_{0}(t)\right|  + \sup_{t\in(0,T)}\left|{h}_{1}(t)\right|.
		%e^{-\sigma T}\max_{z\in[0,1]}\left|{w}_0(z)\right|_{L^{\infty}(0,1)}+\frac{1}{{q}}\left\|{d}_0\right\|_{L^{\infty}(0, T)}  + \left\|{d}_4\right\|_{L^{\infty}(0, T)}+\frac{1}{\lambda_0-{\sigma}}\left\|{f}\right\|_{L^{\infty}\left((0,T);L^\infty(0,1)\right)}.
	\end{align}
	\textbf{Step 3:} Prove the lower bound of {${v}$}.
	%Let $\overline{w}(z,t):=-{w}(z,t)$. It is clear that
	%\begin{subequations}\label{overline w}
	%\begin{align}
	%\overline{w}_t(z,t)=& \overline{w}_{zz}(z,t)- c(t) \overline{w}(z,t)\notag\\
	%&-\widetilde{\psi}(z,t), (z,t)\in Q_T,\\
	% \overline{w}_x(0, t)=& b\overline{w}(0, t)-{d}_0(t) ,t\in (0,T),\\
	% \overline{w}(1, t)=& -\widetilde{d}_1(t) ,t\in (0,T),\\
	% \overline{w}(x,0)=&\overline{w}_0(x),x\in(0,1).
	%\end{align}
	%\end{subequations}
	For the same ${\sigma}\in (0,\underline b)$, let
	\begin{align*}
		{\overline{\phi}(z, t):=} -e^{{\sigma} t}{v}(z, t),\overline{\phi}_0(z):=-{v}_0(z),
		\overline{h}(z, t):= -e^{{\sigma} t} {h}(z, t), {\overline{h}_{0}(t):=}  -e^{{\sigma} t} {h}_{0}(t), \overline{h}_{1}(t):= -e^{{\sigma} t} {h}_{1}(t).
	\end{align*}
	For any $ T\in\mathbb{R}_{>0}$, it is clear that $\overline{\phi}$ satisfies
	%\begin{subequations}\label{overlineoverlinev}
	\begin{align*}
		\overline{\phi}_t(z, t)=& \overline{\phi}_{zz}(z, t)+({\sigma}-b_0(z,t)) \overline{\phi}(z, t)+\overline{h}(z,t),(z,t)\in Q_T ,\\%\label{overlineoverlinev1}\\
		\overline{\phi}_z(0, t)=&{q_0}\overline{\phi}(0, t)+\overline{h}_{0}(t),t\in(0,T),\\
		\overline{\phi}(1, t)=&  \overline{h}_{1}(t),t\in(0,T),\\
		\overline{\phi}(z,0)=&\overline{\phi}_0(z),z\in(0,1).
	\end{align*}
	%\end{subequations}
	Let $g$, $G$ be defined by~\eqref{gG}, and
	\begin{align*}
		\overline{\mathcal{H}}:=&\sup_{z\in(0,1)}\left|\overline{\phi}_0(z)\right|+\frac{1}{\underline b-{\sigma}}\sup_{(z, t) \in {Q}_T}\left|\overline{h}(z,t)\right|+\frac{1}{{q_0}}\sup_{t\in(0,T)}\left|\overline{h}_{0}(t)\right|+\sup_{t\in(0,T)}\left|\overline{h}_{1}(t)\right|.
	\end{align*}
	%	\begin{align*}
		%		\overline{\mathcal{H}}:=&\left\|\overline{\phi}_0\right\|_{L^{\infty}(0,1)}+\frac{1}{{q}}\left\|\overline{d}_3\right\|_{L^{\infty}(0, T)}+\left\|\overline{d}_4\right\|_{L^{\infty}(0, T)}+\frac{1}{\lambda_0-{\sigma}}\left\|\overline{\psi}\right\|_{L^{\infty}((0,T);L^\infty(0,1))}.
		%	\end{align*}
	
	Analogous to Step 2, by deriving
	\begin{align*}
		\frac{\text{d}}{\text{d} t} \int_0^1 G\left(\overline{\phi}-\overline{\mathcal{H}}\right) \text{d} z \leq 0,\forall t\in (0,T),
	\end{align*}
	we get
	\begin{align*}
		\int_0^1G\left(\overline{\phi}(z, T)-\overline{\mathcal{H}}\right) \text{d} z \leq \int_0^1 G\left(\overline{\phi}_0(z)-\overline{\mathcal{H}}\right) \text{d} z.
	\end{align*}
	Therefore, it holds that
	%which along with the continuity of $\overline{v}$ implies that
	\begin{align*}
		\overline{\phi}(z, T) \leq \overline{\mathcal{H}}, \forall z \in(0,1) ,
	\end{align*}
	%
	%
	%Therefore, for any $ x\in(0,1)$, we have
	%\begin{align*}
	% \overline w(z,t)=&e^{-{\sigma} T} \overline{v}(z,t) \\
	% \leq& e^{-{\sigma} T} \overline{\mathcal{D}} \\
	% =&e^{-{\sigma} T} \max \left\{\left\|\overline{w}_0\right\|_{L^ \infty(0,1)},\frac{1}{b}\left\|\overline{d}_0\right\|_{L^{\infty}(0, T)},\right.\\
	%&\left\|\overline{d}_1\right\|_{L^{\infty}(0, T)}, \left.\frac{1}{c_{2}}\left\|\overline{\psi}\right\|_{L^{\infty}((0,T);L^\infty(0,1))}\right\} .
	%\end{align*}
	%
	%
	%In particular,
	which implies that
	\begin{align}\label{lower}
		{-{v}(z, T)}
		%\leq& e^{-{\sigma} T}\max \left\{\left\|\widetilde{w}_0\right\|_{L^\infty(0,1)},\frac{e^{{\sigma}T}}{b}\left\|\widetilde{d}_0\right\|_{L^{\infty}(0, T)},\right.\notag\\
		%&\left.e^{{\sigma}T}\left\|\widetilde{d}_1\right\|_{L^{\infty}(0, T)},\frac{e^{{\sigma}T}}{c_{2}} \left\|\widetilde{\psi}\right\|_{L^{\infty}((0, T);L^\infty(0,1))}\right\}\notag\\
		% =&\max \left\{e^{-{\sigma} T}\left\|\widetilde{w}_0\right\|_{L^\infty(0,1)},\frac{1}{b}\left\|\widetilde{d}_0\right\|_{L^{\infty}(0, T)},\right. \notag\\
		%& \left.\left\|\widetilde{d}_1\right\|_{L^{\infty}(0 , T)}, \frac{1}{c_{2}}\left\|\widetilde{\psi}\right\|_{L^{\infty}((0, T);L^\infty(0,1))}\right\}\notag\\
		\leq e^{-{\sigma} T}\sup_{z\in(0,1)}\left|{v}_0(z)\right|+\frac{1}{\underline b-{\sigma}}\sup_{(z,t)\in Q_T}\left|{h}(z,t)\right|+\frac{1}{q_0}\sup_{t\in(0,T)}\left|{h}_{0}(t)\right|  +\sup_{t\in(0,T)}\left|{h}_{1}(t)\right|.
	\end{align}
	%	\begin{align}\label{lower}
		%		{-{w}(z, T)}
		%		%\leq& e^{-{\sigma} T}\max \left\{\left\|\widetilde{w}_0\right\|_{L^\infty(0,1)},\frac{e^{{\sigma}T}}{b}\left\|\widetilde{d}_0\right\|_{L^{\infty}(0, T)},\right.\notag\\
		%		%&\left.e^{{\sigma}T}\left\|\widetilde{d}_1\right\|_{L^{\infty}(0, T)},\frac{e^{{\sigma}T}}{c_{2}} \left\|\widetilde{\psi}\right\|_{L^{\infty}((0, T);L^\infty(0,1))}\right\}\notag\\
		%		% =&\max \left\{e^{-{\sigma} T}\left\|\widetilde{w}_0\right\|_{L^\infty(0,1)},\frac{1}{b}\left\|\widetilde{d}_0\right\|_{L^{\infty}(0, T)},\right. \notag\\
		%		%& \left.\left\|\widetilde{d}_1\right\|_{L^{\infty}(0 , T)}, \frac{1}{c_{2}}\left\|\widetilde{\psi}\right\|_{L^{\infty}((0, T);L^\infty(0,1))}\right\}\notag\\
		%		\leq e^{-{\sigma} T}\left\|{w}_0\right\|_{L^\infty(0,1)}+\frac{1}{q}\!\left\|{d}_3\right\|_{L^{\infty}(0, T)}  +\left\|{d}_4\right\|_{L^{\infty}(0 , T)}+\frac{1}{\lambda_0-{\sigma}}\left\|{\psi}\right\|_{L^{\infty}((0, T);L^\infty(0,1))}.
		%	\end{align}
	
\textbf{Step 4:} Estimate of $v$.	
%	 {By the definition of $f$ (see~\eqref{def-psi}),} we have
%	\begin{align}\label{defenition psi}
%		{\max_{(z,t)\in\overline Q_T}\left|{f}(z,t)\right|}\le& b_0\max_{t\in[0,T]}\left|{d}_1(t)\right|+\left(\overline{k}+1\right)\max_{t\in[0,T]}\left|{f}(t)\right|
%		+\overline{k}\max_{t\in[0,T]}\left|{d}_3(t)\right|,
%	\end{align}
	%	\begin{align}\label{defenition psi}
		%		{\left\|{\psi}\right\|_{L^{\infty}((0, T);L^\infty(0,1))}}\le& b_0\left\|{d}_1\right\|_{L^{\infty}(0, T)}+\left(\overline{k}+1\right)\left\|{f}\right\|_{L^{\infty}((0, T);L^\infty(0,1))}
		%		+\overline{k}\left\|{d}_3\right\|_{L^{\infty}(0, T)},
		%	\end{align}
%	{where $\overline{k}:=\max _{(z,{y})\in \Omega}|k(z, {y})|$ and $b_0:=\max_{z\in[0,1]}|\alpha(z)B_1|$.}
	By \eqref{upper} and  \eqref{lower},  we deduce that for any $ T\in\mathbb{R}_{>0}$,
	\begin{align*}
		\sup_{z\in(0,1)}\left|{v}(z,T)\right| \leq&  {C}\left(e^{-\sigma T}\sup_{z\in(0,1)}\left|{v}_0(z)\right|+\sup_{(z,t)\in Q_T}\left|h(z,t)\right|+\sup_{t\in(0,T)}\left|{h}_0(t)\right|+\sup_{t\in(0,T)}\left|{h}_{1}(t)\right|\right),
	\end{align*}
	%	\begin{align}\label{w-estimate}
		%		\left\|{w}[T]\right\|_{L^{\infty}(0,1)} \leq& c\left(e^{-\sigma T}\left\|{w}_0\right\|_{L^\infty(0,1)}+\left\|{d}_1\right\|_{L^{\infty}(0, T)}+\left\|{d}_2\right\|_{L^{\infty}\left((0,T);L^\infty(0,1)\right)}+\left\|{d}_3\right\|_{L^{\infty}(0, T)}+\left\|{d}_1\right\|_{L^{\infty}(0, T)}\right),
		%	\end{align}
	where  {$C$} is a positive constant depended on $q_0,\underline b$, and $\sigma$. The proof is complete.
		\end{proof}
%In this section, we prove   Proposition~\ref{target main result} and Theorem~\ref{original main result} given above.
%In this section, we investigate the ISS of system~\eqref{original system} under the state feedback control law~\eqref{state controller}.
%\subsection{main results}
%To establish ISS of the closed loop~\eqref{original system} under the control law~\eqref{state controller}, we first prove that the target system \eqref{original target system} is ISS.
%it needs to show that the transformation~\eqref{transformation} is invertible. We have the following Proposition holds.

 Based on  Proposition \ref{method-v},  we can show that the target system \eqref{original target system} is ISS, as stated in the following proposition.
\begin{prpstn}\label{target main result}
	The target system \eqref{original target system}
	is {\rm ISS}  in the $\sup$-norm, having the estimate%namely, for  any initial state $(X_0,w_0)\in \mathbb{R}^N \times C([0,1];\mathbb{R})$ and all $ T\in\mathbb{R}_{>0}$,
	%it \textcolor{blue}{holds that}
		\begin{align*}
		|X(T)|+\sup_{z\in(0,1)}\left|w(z,T)\right| \leq& C_1 \left(e^{-\zeta T}\left(\left|X_0\right|+\sup_{z\in (0,1)}\left|w_0(z)\right|\right)+\sup_{t\in(0,T)}\left|{D}(t)\right|\right. \\
		& \left.+\sup_{(z, t) \in {Q}_T}\left|f(z,t)\right|+\sup_{t\in(0,T)}\left|d_0(t)\right|+\sup_{t\in(0,T)}\left|d_1(t)\right|\right),\forall T\in\mathbb{R}_{>0},
	\end{align*}
		where  $C_1$ and $\zeta$ are positive constants,  {which are independent  of} disturbances $f, D, d_0, d_1$ and initial datum $(X_0,w_0)$.
		\end{prpstn}
	\begin{proof}
 In view of the  pole placement theorem (see \cite{Wonham1967}), we  {construct the  following Lyapunov} function
		\begin{align}\label{lyapunov functional}
			V(t)=X(t)^{\top} P X(t), \forall t\in(0,T),
		\end{align}
		where  the $N\times N$ matrix $P=P^{\top}>0$ is the solution to the Lyapunov equation
		\begin{align*}
			P(A+B K)+(A+B K)^{\top} P=-H
		\end{align*}
		with some  $H=H^{\top}>0$.

		 {For \eqref{lyapunov functional}, it is clear that}
		\begin{align}\label{V-leq-geq}
			\lambda_{\min}(P)|X(t)|^2\le V(t)\le \lambda_{\max}(P)|X(t)|^2, \forall t\in(0,T),
		\end{align}
		where 	$\lambda_{\min}(P)$ and  $\lambda_{\max}(P)$ are respectively the smallest and the largest eigenvalues of $P$, and both of them are positive numbers.
		
Computing the derivative of the Lyapunov function~\eqref{lyapunov functional} w.r.t. $t$, we can arrive at
		\begin{align}\label{derivate-V}
			\dot{V}(t)=&-X(t)^{\top} H X(t)+2 X(t)^{\top} P B w(0, t)+2 X(t)^{\top} P {D}(t)\notag\\
			\leq&-\frac{\lambda_{\min}(H)}{2}|X(t)|^2+\frac{4|P B|^2}{\lambda_{\min}(H)}|w(0, t)|^2+\frac{4\left|P {D}(t)\right|^2}{\lambda_{\min}(H)}, \forall t\in (0,T).
		\end{align}
		 {Note that  system  \eqref{original target system} has a special form of  system \eqref{v-system}.} Thus, Proposition \Rref{method-v} ensures that
			\begin{align}\label{estimate-w}
			\sup_{z\in(0,1)}\left|{w}(z,T)\right| \leq& c_1\left(e^{-\sigma_0 T}\sup_{z\in(0,1)}\left|{w}_0(z)\right|+\sup_{(z,t)\in Q_T}\left|f(z,t)\right|+\sup_{t\in(0,T)}\left|{d}_0(t)\right|+\sup_{t\in(0,T)}\left|{d}_1(t)\right|\right),
		\end{align}
		where $\sigma_0\in(0,\inf_{z\in(0,1)}\lambda_0(z))$  and $c_1$ are positive constants,  {which are} independent of   disturbances $f, d_0,d_1$ and initial {datum} $w_0$.

We deduce by  \eqref{estimate-w} that
		\begin{align}\label{w0t}
			|w(0,t)|^2 \leq 2 c_1^2\left(e^{-2 \sigma_0 t}\left(\sup_{z\in(0,1)}|w_0(z)|\right)^2+\mathcal{D}^2\right), \forall t\in (0,T),
		\end{align}
		%	\begin{align}\label{w0t}
			%		|w(0,T)|^2 \leq 2 c^2\left(e^{-2 \sigma t}\|w_0\|^2_{L^\infty(0,1)}+\mathcal{D}^2\right),
			%	\end{align}
		where  $\mathcal{D}:=\sup_{t\in(0,T)}\left|D(t)\right|+\sup_{(z,t)\in Q_T}\left|f(z,t)\right|+\sup_{t\in(0,T)}\left|{d}_0(t)\right|+\sup_{t\in(0,T)}\left|{d}_1(t)\right|$.
		
		Substituting \eqref{w0t}   into \eqref{derivate-V},  we get
		\begin{align*}
			\dot{V}(t)%\leq&-\frac{\lambda_{\min}(Q)}{2}|X|^2+\frac{4|P B|^2}{\lambda_{\min}(Q)}|w(0, t)|^2\\
			%	&+\frac{4\left|P B_1\right|^2}{\lambda_{\min}(Q)}\left|d_1(t)\right|^2\\
			\leq&-\frac{\lambda_{\min}(H)}{2\lambda_{\max}(P)}V(t)+\frac{8|P B|^2c^2}{\lambda_{\min}(H)}e^{-2\sigma_0 t}\left(\sup_{z\in(0,1)}|w_0(z)|\right)^2+\frac{8\left|P B\right|^2c^2}{\lambda_{\min}(H)}\mathcal{D}^2
			+\frac{4\left|P {D}(t)\right|^2}{\lambda_{\min}(H)}\notag\\
			\leq&-c_2V(t)+c_3\left(e^{-2\sigma_0 t}\left(\sup_{z\in(0,1)}|w_0(z)|\right)^2+\mathcal{D}^2+\left|{D}(t)\right|^2\right), \forall t\in (0,T),
		\end{align*}
		where  $c_2:=\frac{\lambda_{\min}(H)}{2\lambda_{\max}(P)}$ and $c_3:=\max\left\{\frac{8\left|P B\right|^2c_1^2}{\lambda_{\min}(H)},\frac{4\left|P \right|^2}{\lambda_{\min}(H)}\right\}$.
		
		By the Gronwall's inequality, for any $T\in\mathbb{R}_{>0}$, we   obtain
		\begin{align*}
			V(T) \leq& e^{-c_2 T} V(0)+c_3\left(\sup_{z\in(0,1)}|w_0(z)|\right)^2\int_0^T e^{-c_2(T-s)-2\sigma_0 s}\text{d} s+ c_3\mathcal{D}^2\int_0^T e^{-c_2(T-s)} \text{d} s\notag\\
			&+c_3 \left(\sup_{t\in(0,T)}|{D}(t)|\right)^2\int_0^T e^{-c_2(T-s)} \text{d} s\notag\\
			\leq& e^{-c_2 T}V (0)+\frac{c_3\left(e^{-2\sigma_0 T}-e^{-c_2 T}\right)}{c_2-2\sigma_0}\left(\sup_{z\in(0,1)}|w_0(z)|\right)^2
			+\frac{c_3(1-e^{-c_2 T })}{c_2}  \left( \mathcal{D}^2+\left(\sup_{t\in(0,T)}|{D}(t)|\right)^2\right),
		\end{align*}
		%	\begin{align*}
			%		V(T) \leqslant& e^{-c_2 T} V(0)+c_2\|w_0\|^2_{L^\infty(0,1)}\int_0^T e^{-c_1(T-s)-2\sigma s}\text{d} s+ c_2\mathcal{D}^2\int_0^T e^{-c_1(T-s)} \text{d} s
			%		+c_2 \|d_1\|^2_{L^\infty(0,T)}\int_0^T e^{-c_1(T-s)} \text{d} s\notag\\
			%		\leq& e^{-c_1 T}V (0)+\frac{c_2\left(e^{-2\sigma T}-e^{-c_1 T}\right)}{c_1-2\sigma}\|w_0\|^2_{L^\infty(0,1)}
			%		+\frac{c_2(1-e^{-c_1 T })}{c_1}  \left( \mathcal{D}^2+\|d_1\|^2_{L^\infty(0,T)}\right).
			%	\end{align*}
		which, along with \eqref{V-leq-geq}, yields
		\begin{align*}
			|X(T)|^2\leq& \frac{\lambda_{\max}(P)}{\lambda_{\min}(P)}e^{-c_2T}|X_0|^2+\frac{c_3\left(e^{-2\sigma_0 T}+e^{-c_2 T}\right)}{\lambda_{\min}(P)|c_2-2\sigma_0|}\left(\sup_{z\in(0,1)}|w_0(z)|\right)^2
			+\frac{c_3}{c_2\lambda_{\min}(P)}\left(\mathcal{D}^2+\left(\sup_{t\in(0,T)}|{D}(t)|\right)^2\right).
			% \leq& c_3\left(e^{-c_2T}|X_0|^2+e^{-2\sigma_0 T}\|w_0\|^2_{L^\infty(0,1)}\right)\notag\\
			%&+c_3\left(\mathcal{D}^2+\|d_1\|^2_{L^\infty(0,T)}\right),
		\end{align*}
		It follows that
		\begin{align*}
			|X(T)|\leq&  c_4\left(e^{-\frac{c_2}{2}T}|X_0|+\left(e^{-\sigma_0 T}+e^{-\frac{c_2}{2}T}\right)\sup_{z\in(0,1)}|w_0(z)|\right)+c_4\left(\mathcal{D}+\sup_{t\in(0,T)}|{D}(t)|\right)\notag\\
			\leq&  c_4\left(e^{-\zeta T}|X_0|+2e^{-\zeta T}\sup_{z\in (0,1)}|w_0(z)|\right)+c_4\left(\mathcal{D}+\sup_{t\in(0,T)}|{D}(t)|\right),
		\end{align*}
		where $c_4:=\max\left\{\sqrt{\frac{\lambda_{\max}(P)}{\lambda_{\min}(P)}},\sqrt{\frac{c_3}{\lambda_{\min}(P)|c_2-2\sigma_0|}},\sqrt{\frac{c_3}{c_2\lambda_{\min}(P)}}\right\}$ and  $\zeta:=\min\left\{\sigma_0,\frac{c_2}{2}\right\}$.

		%Putting~\eqref{w0t} into~\eqref{VT}, we can arrive at
		%	\begin{align*}
			%	 V(T) \leq& e^{-\lambda_0 T} V(0)\notag\\
			%	 &+\lambda_1 \int_0^T e^{-\lambda_0(T-S)} 2 c_1^2 e^{-2\sigma_0 s}\left\|w_0\right\|_{L^\infty(0,1)}^2 \text{d} s \\
			%	& +\lambda_1 \int_0^T e^{-\lambda_0(T-s)} 2 c_1^2 \mathcal{D}^2 \text{d} s\notag\\
			%	&+\lambda_2 \int_0^T e^{-\lambda_0(T- s)}\left\|d_1\right\|_{L^\infty(0,T)}^2 \text{d} s \\
			%	 \leq& e^{-\lambda_0 T} V(0)+\frac{2 \lambda_1 c_1^2 e^{-2\sigma_0 T}}{\lambda_0-2\sigma_0}\left\|w_0\right\|^2_{L^{\infty}(0,1)} \\
			%	& +\frac{2 c_1^2 \lambda_1}{\lambda_0} \mathcal{D}^2+\frac{\lambda_2}{\lambda_0}\left\|d_1\right\|^2_{L^\infty\left(0, T\right) }\\
			%	& +\frac{2 c_1^2 \lambda_1}{\lambda_0} \mathcal{D}^2+\frac{\lambda_2}{\lambda_0}\left\|d_1\right\|^2_{L^\infty(0, T)} ,
			%\end{align*}
			%where $\lambda_0:=\frac{\lambda_{\min (Q)}}{2}$, $\lambda_1:=\frac{4|PB|^2}{\lambda_{\min (Q)}}$, $\lambda_2:=\frac{4|PB_1|^2}{\lambda_{\min (Q)}}$.
			
Then, for any $ T\in\mathbb{R}_{>0}$, by the definition of $\mathcal{D}$ and \eqref{estimate-w}, we have
			%\begin{align*}
			%\frac{\lambda_{\min }(P)}{2}|x|^2 \leq V \leqq \quad e^{-\lambda_0 T} \cdot \frac{\lambda_{\max (P)}^2}{2}\left|x_0\right|^2+\frac{2 \lambda_1 c_1^2 e^{-26 T}}{\lambda_0-26}\left\|w_0\right\|^2
			%\end{align*}
			\begin{align*}
				| X(T)| +\sup_{z\in(0,1)}|w(z,T)|\leq&\left(c_1+2c_4\right)e^{-\zeta T}\sup_{z\in(0,1)}\left|w_0(z)\right|+c_4e^{-\zeta T}|X_0|
				+(c_1+2c_4)\sup_{t\in(0,T)}|{D}(t)|\notag\\
				&+(c_1+c_4)\left(\sup_{(z, t) \in {Q}_T}|f(z,t)|+\sup_{t\in(0,T)}|d_0(t)|+\sup_{t\in(0,T)}|d_1(t)|\right)\notag\\
				\leq&c_5e^{-\zeta T}\left(\sup_{z\in(0,1)}\left|w_0(z)\right|+|X_0|\right)\notag\\
				&+c_5\left(\sup_{t\in(0,T)}|{D}(t)|+\sup_{(z,t)\in Q_T}|f(z,t)|+\sup_{t\in(0,T)}|d_0(t)|+\sup_{t\in(0,T)}|d_1(t)|\right),
			\end{align*}
			where $c_5:=c_1+2c_4$.

			%By the definition of $\mathcal{D}$, we have
			%\begin{align*}
			%	\mathcal{D}^2+\|d_1\|^2_{L^\infty(0,T)}\leq& 5\|d_1\|^2_{L^\infty(0,T)}+4\|f\|^2_{L^{\infty}((0, T);L^\infty(0,1))}\notag\\
			%	&+4\|d_0\|^2_{L^\infty(0,T)}+4\|d_1\|^2_{L^\infty(0,T)}.
			%\end{align*}
			%
			%Putting it into \eqref{x+w}, we get
			%	\begin{align*}
				%			&| X |^2 +\|w[T]\|_{L^\infty(0,1)}^2\\
				%	\leqslant & c_3 \left(e^{-\mu T}\left(\left|X_0\right|^2+\left\|w_0\right\|^2_{L^\infty(0,1)}\right)+\left\|d_1\right\|^2_{L^\infty(0,T)}\right. \\
				%	& \left.+\left\|f\right\|^2_{L^\infty((0,1);L^\infty(0,T))}+\left\|d_0\right\|^2_{L^\infty(0,T)}+\left\|d_1\right\|^2_{L^\infty(0,T)}\right),
				%\end{align*}
				%where $\mu:=\min\left\{c_1,2\sigma\right\}$,$c_3:=\max\left\{\frac{\lambda_{\max}(Q)}{\lambda_{\min}(Q)},5c^2+\frac{5c_2}{\lambda_{\min}(Q)}\right\}$ .
				
				Thus, the ISS of system~\eqref{original target system} is established, and the proof of Proposition~\Rref{target main result} is complete.
				%$\mid x\left\|^2+\right\| w_0 \|_{2^{\infty}}^2 \leqslant C_2\left(e^{-\lambda_0 T}\left|x_0\right|^2+\left.e^{-6 T}\left\|w_0\right\|\right|^2+\left\|d_1\right\|\left\|^2+\right\| f\left\|^2+\right\| d_0 \|^2\right.$ $\left.+\left\|d_1\right\|^2\right)$
		\end{proof}
%	or
%	\begin{align*}
%		| X(T)| +\|w[T]\|_{L^\infty(0,T)}\leq & C_0 \left(e^{-\zeta T}\left(\left|X_0\right|+\left\|w_0\right\|_{L^\infty(0,1)}\right)+\left\|d_1\right\|_{L^\infty(0,T)}\right. \\
%		& \left.+\left\|f\right\|_{L^\infty((0,1);L^\infty(0,T))}+\left\|d_0\right\|_{L^\infty(0,T)}+\left\|d_1\right\|_{L^\infty(0,T)}\right),
%	\end{align*}

	%where $\sigma$ is a positive constant depended  on $\lambda$, $C_1$ is a positive constant depended  on $q,\lambda,\sigma,max_{z\in[0,1]}|\theta(z)B_1|,max_{(z,y)\in D}|k(z,y)|, \lambda_{\min }(Q), \lambda_{\max }(Q), c_2$, and $c_3$.
%\end{prpstn}

		\begin{pf2}
		  By virtue of Proposition~\Rref{kernel exitence}, Proposition~\Rref{equivelent}, and Proposition~\Rref{target main result},   Theorem~\Rref{original main result} can be proved in a standard way and  the details are omitted.
		\end{pf2}

		\section{Numerical Results}\label{sec:numerical results}
		In this section, we present some numerical results  for  system \eqref{original system} under the state feedback control law~\eqref{state controller} in the presence of different disturbances.
		In simulations, we set
		\begin{align*}
					A=&\left(\begin{matrix}0.5 & 2\\{0.1} & 0.6 \end{matrix}\right),\quad B=\left(\begin{matrix}5 \\ 4\end{matrix}\right),  %B_1=\left(\begin{matrix} 1 \\ 2\end{matrix}\right),
		 \quad b(z)= 1.1\pi(1+2z),\quad c(z)=0.1\pi(0.5+z)\sin(1-z), \quad
  q= 0.4, \\
			 	{D}(t)=&j(0.05\sin(t),  0.1 \sin(t))^{\top}, \quad
		 f(t)= j_0\sin(t+z),\quad
			d_0(t)=0.2j_1\sin(2t),\quad d_1(t)=0.3j_2\cos(t),\\
X_0=&m_0(0.25,-0.5)^\top, \quad u_0(z)=0.1m_0z^2(z-2)(z+0.5)(z-1),
		\end{align*}
		where $j,j_0\in\{0,2,4\}$, $j_1,j_2\in\{0,3,5\}$, and  $m_0\in\{1,2\}$ are used to describe the amplitudes of  disturbances and initial data.

In addition, for  the control law \eqref{state controller}, we set
\begin{align*}
K=\left(\begin{matrix} -4 & -2\end{matrix}\right), \quad \lambda_0(z)=0.01\sin(1+2z).
\end{align*}
The  kernel functions $k $  and $\alpha$  in the control law \eqref{state controller} are computed   via \eqref{sequence} and \eqref{equ19}, respectively.

Note that the conditions \eqref{q_c}, \eqref{equ4}, and \eqref{lambda_0} are fulfilled. Moreover, the eigenvalues of $A$ are {$1$ and $0.1$} while $A+BK$ is  Hurwitz.

	\begin{figure}[htbp]
	\centering
	\subfigure[Evolution of  $X$  when $m_0=1$]{
		\includegraphics[width=0.35\textwidth,height=6pc]{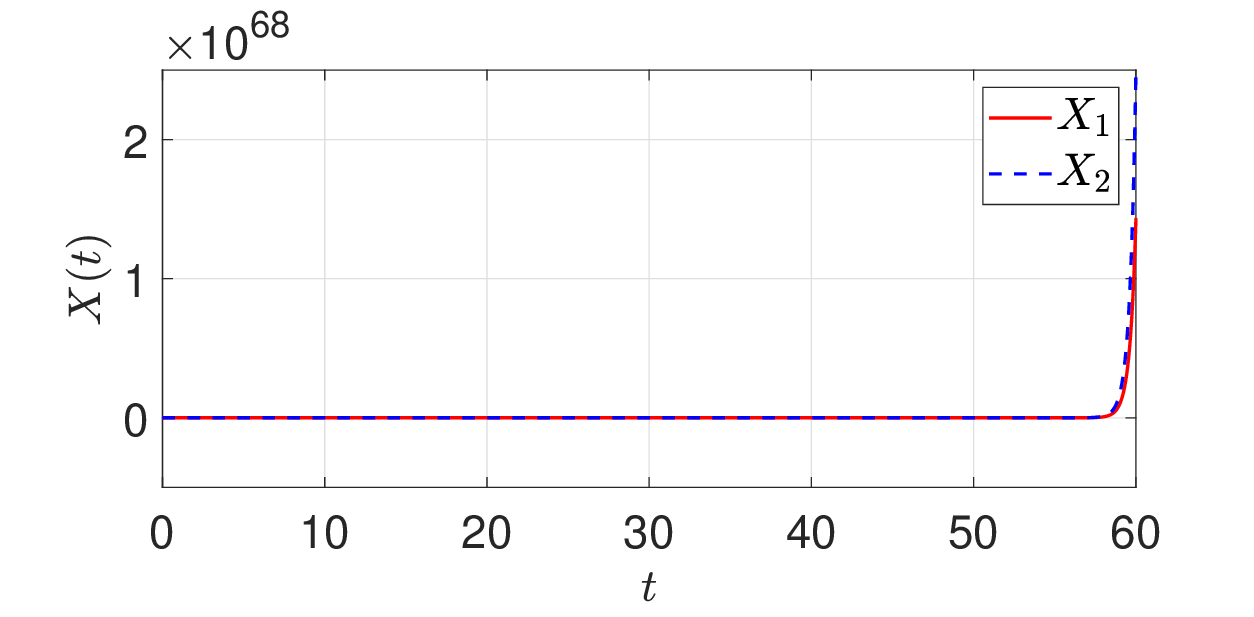}	
	}\noindent\hspace{10pt}
	\subfigure[Evolution of  $X$  when $m_0=2$]{
		\includegraphics[width=0.35\textwidth,height=6pc]{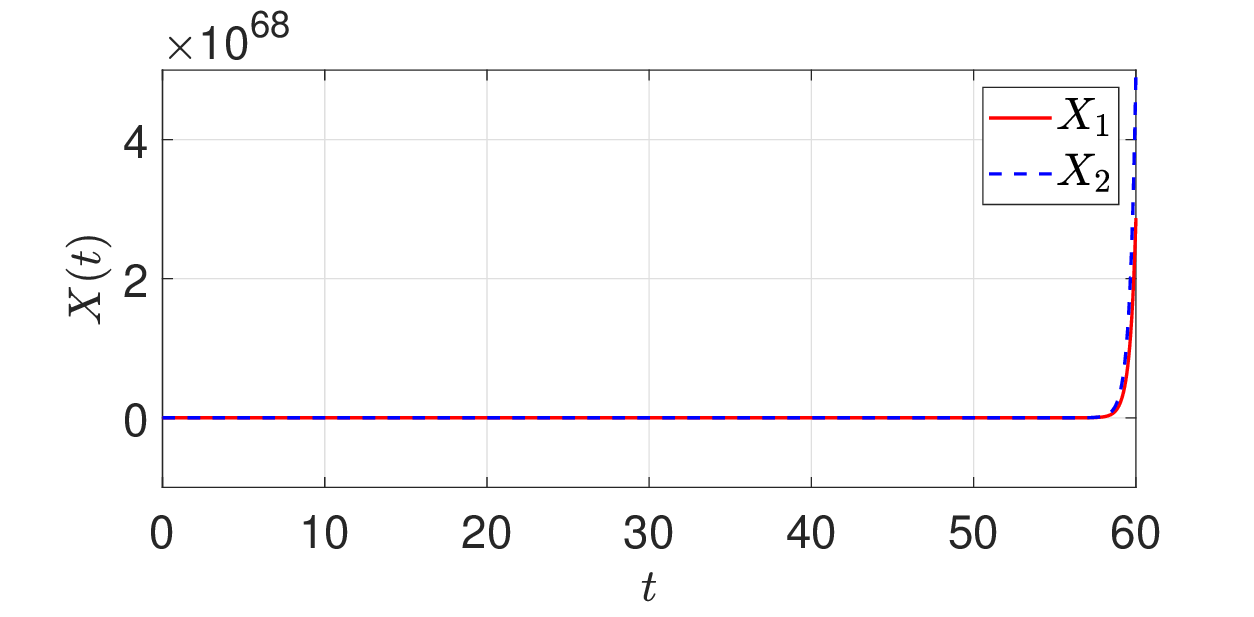}	
	}
	\subfigure[Evolution of  $u$ when $m_0=1$]{
		\includegraphics[width=0.35\textwidth,height=10pc]{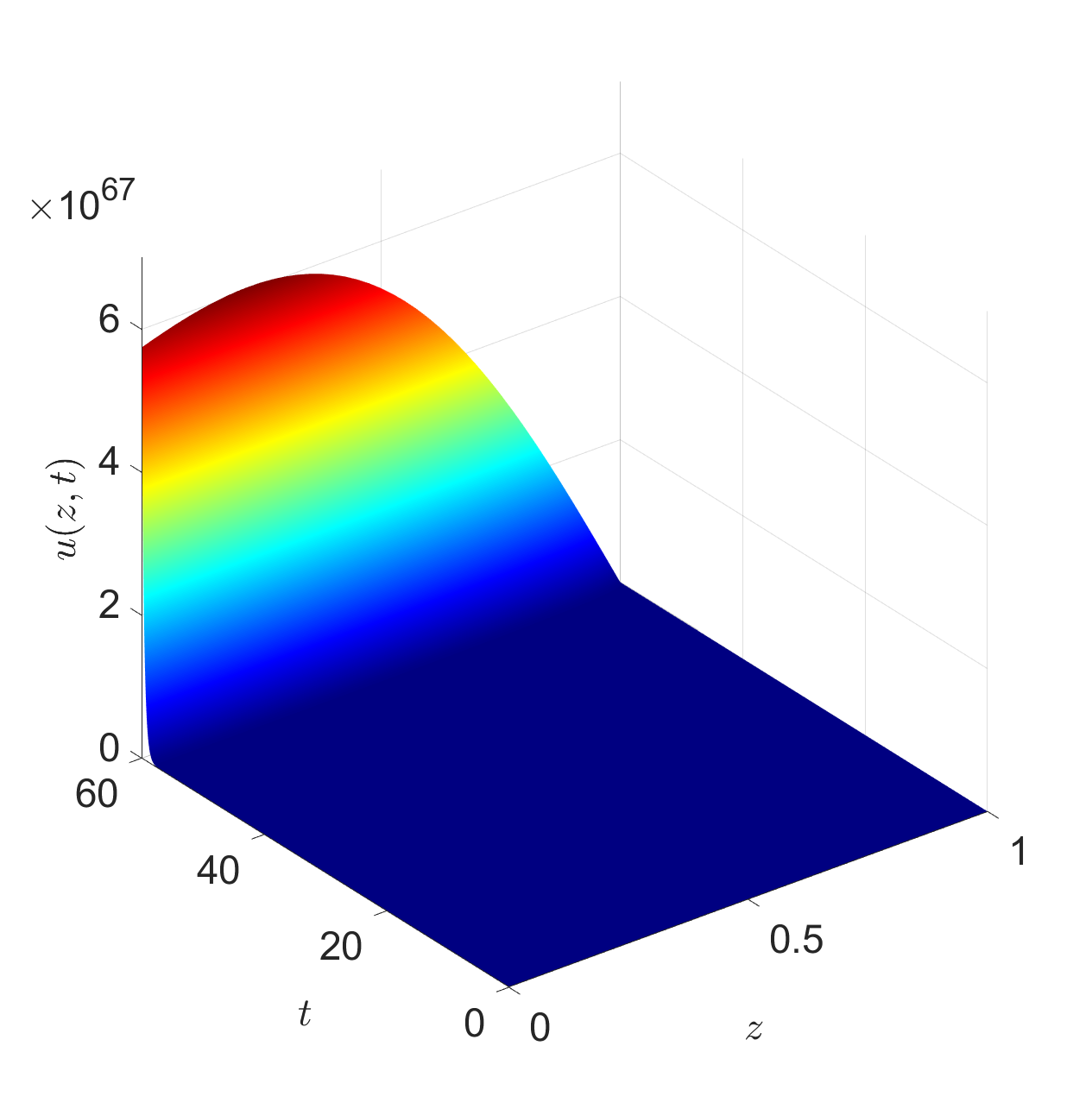}	
	}\hspace{10pt}
	\subfigure[Evolution of  $u$ when $m_0=2$]{
		\includegraphics[width=0.35\textwidth,height=10pc]{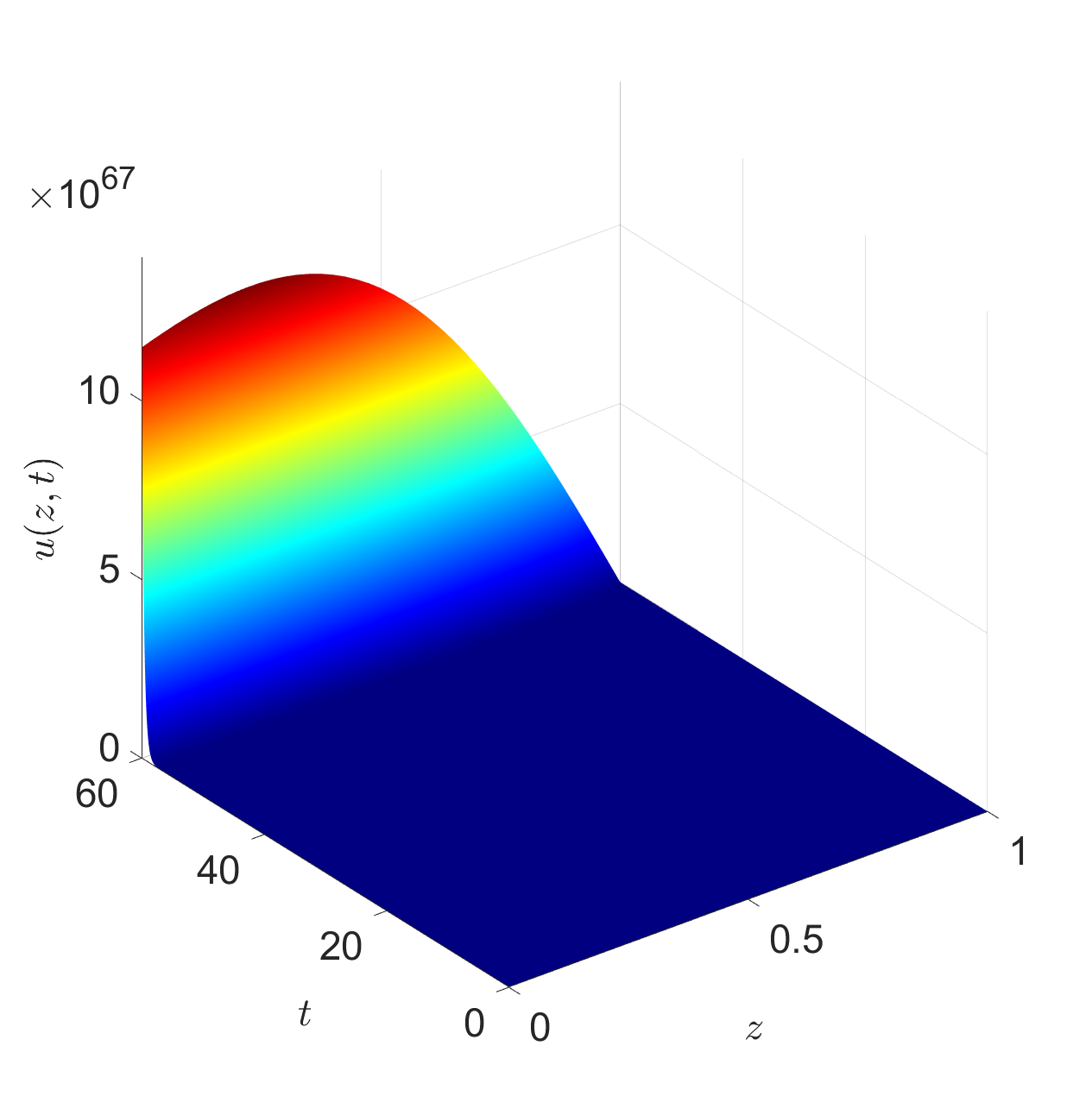}	
	}
	\subfigure[{Evolution of  $|X(t)|+\sup_{z\in (0,1)}\left|u(z,t)\right|$  when $m_0=1$}]{
		\includegraphics[width=0.35\textwidth,height=6pc]{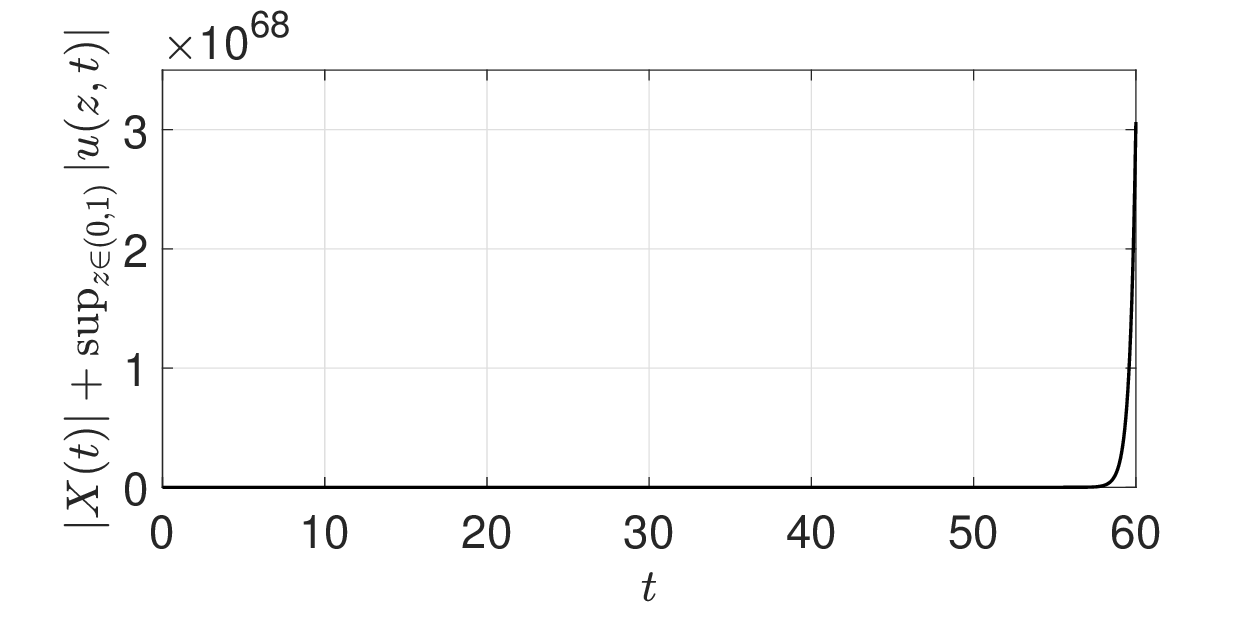}
	}\hspace{10pt}
	\subfigure[ {Evolution of  $|X(t)|+\sup_{z\in (0,1)}\left|u(z,t)\right|$  when $m_0=2$}]{
		\includegraphics[width=0.35\textwidth,height=6pc]{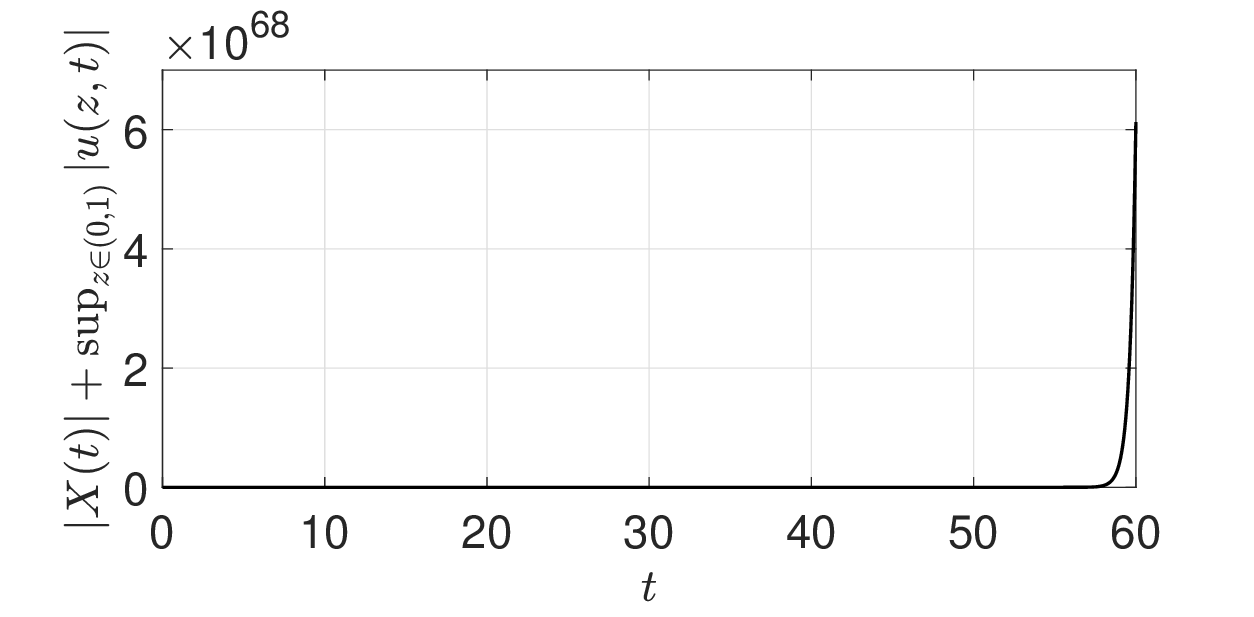}
	}
	%	\end{minipage}
\caption{Evolution of  $X$, $u$, and $|X(t)|+\sup_{z\in (0,1)}\left|u(z,t)\right|$  for system~\eqref{original system} in open loop with different {initial data}}		\label{fig1}
\end{figure}

Figure~\Rref{fig1} shows that system \eqref{original system} with different initial data in open loop, i.e., $U\equiv 0$, is unstable at the origin;  whereas Fig.~\Rref{fig2} (or black solid and dashed curves {in Fig.~\Rref{fig7}}) shows that  the disturbance-free system \eqref{original system}  is asymptotically stable at the origin under the control law \eqref{state controller}.

			\begin{figure}[t!]
			\centering
			\subfigure[Evolution of  $X$ when $m_0=1$]{
				%		\label{fig:2}
				\includegraphics[width=0.35\textwidth,height=6pc]{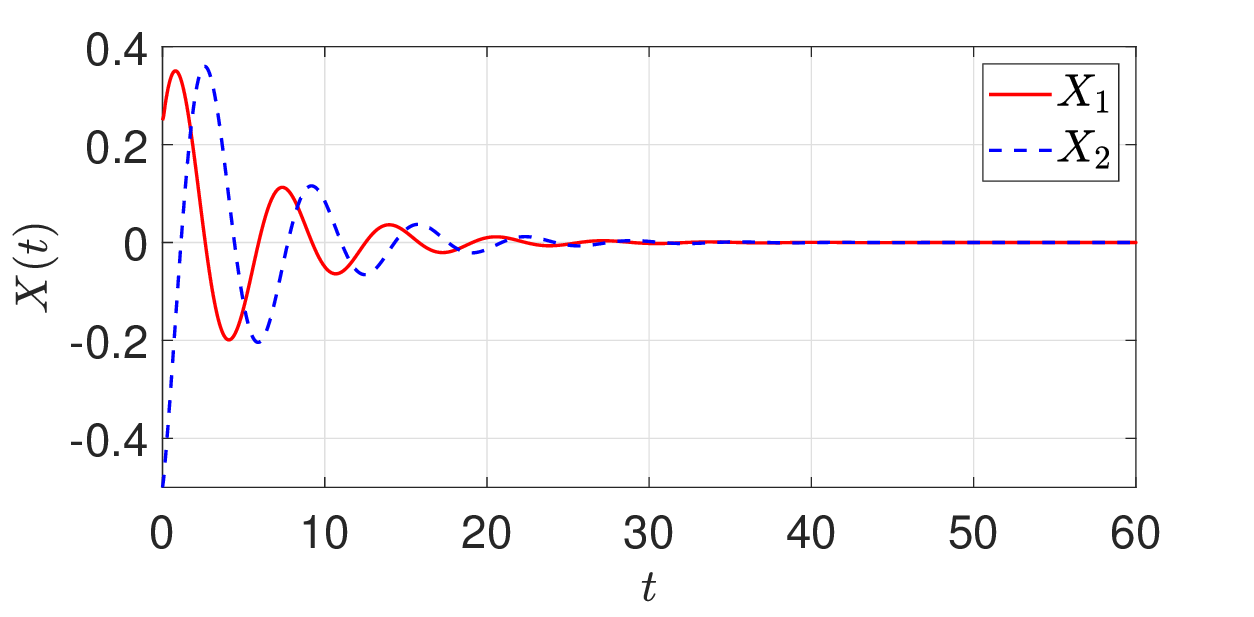}
			}\hspace{10pt}
			\subfigure[  Evolution of  $X$ when $m_0=2$]{
				%		\label{fig:2}
				\includegraphics[width=0.35\textwidth,height=6pc]{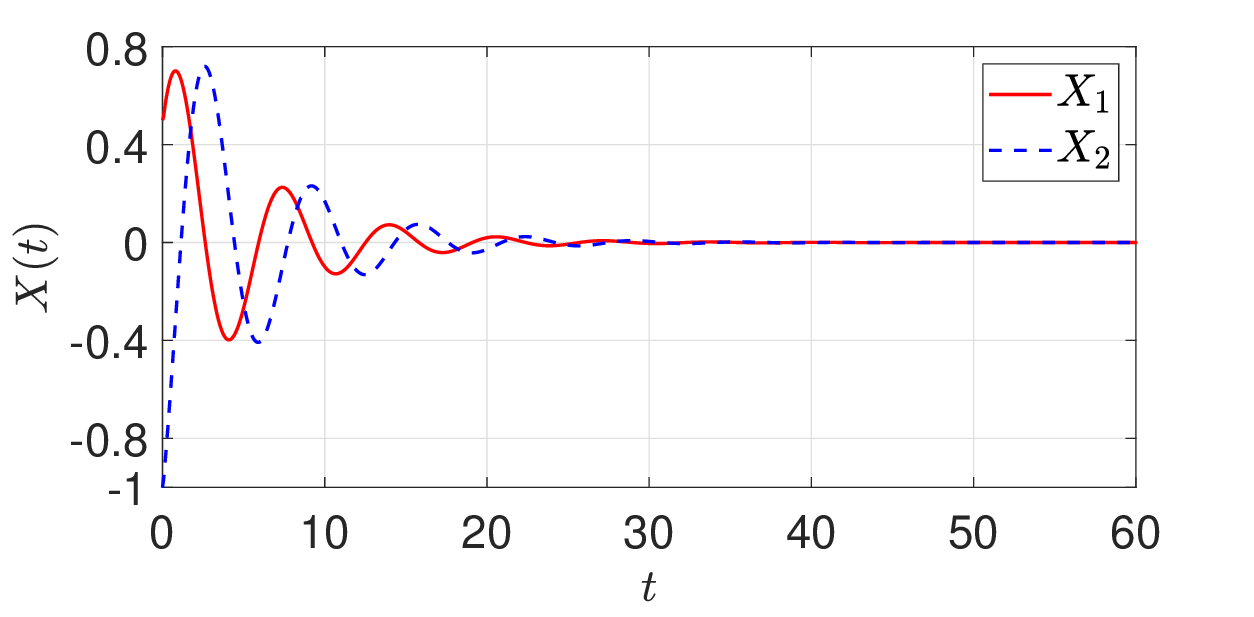}
			}
			\subfigure[Evolution of  $u$ when $m_0=1$]{
				\includegraphics[width=0.35\textwidth,height=10pc]{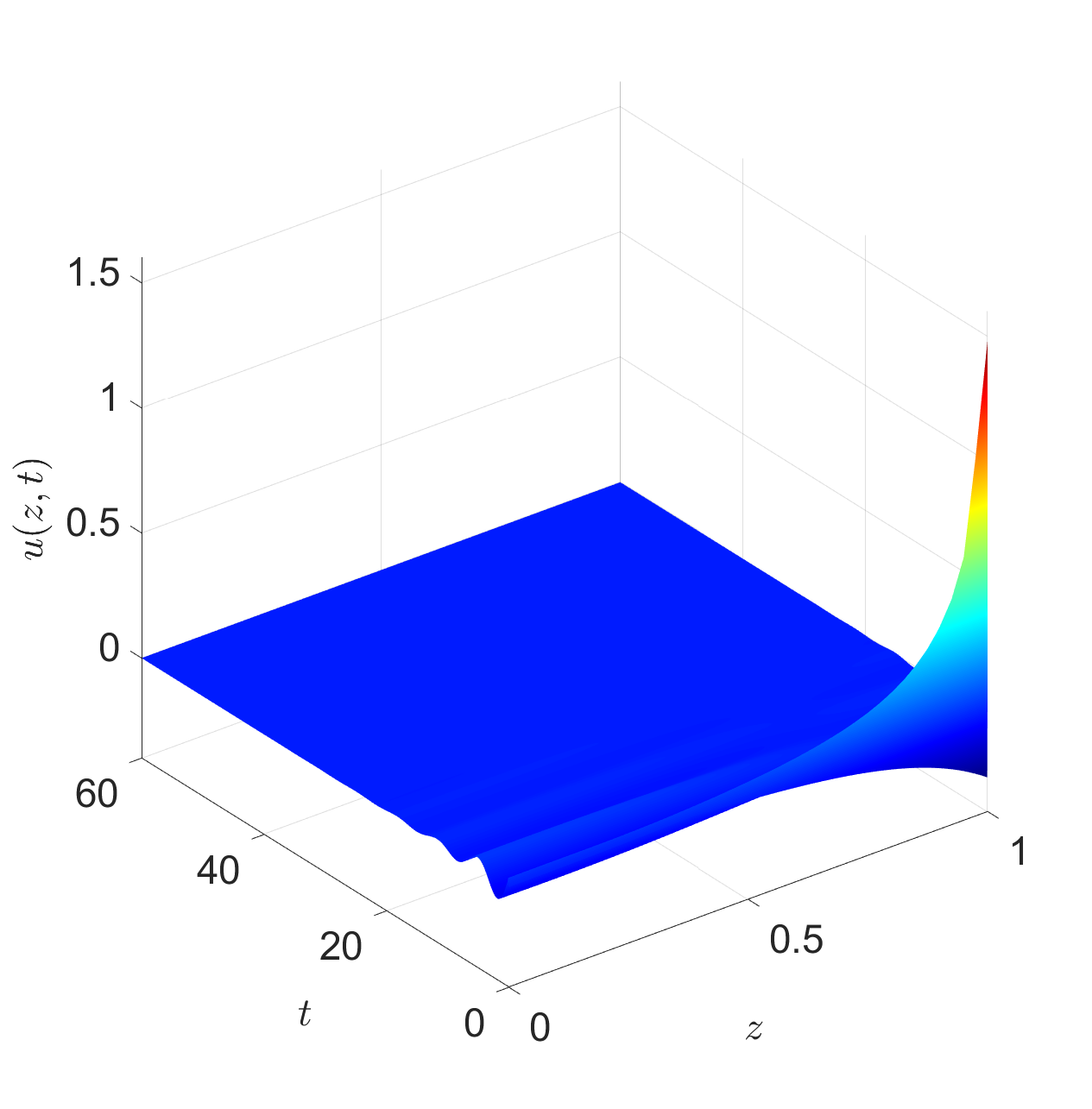}	
			}\noindent\hspace{10pt}
			\subfigure[Evolution of  $u$ when $m_0=2$]{
				\includegraphics[width=0.35\textwidth,height=10pc]{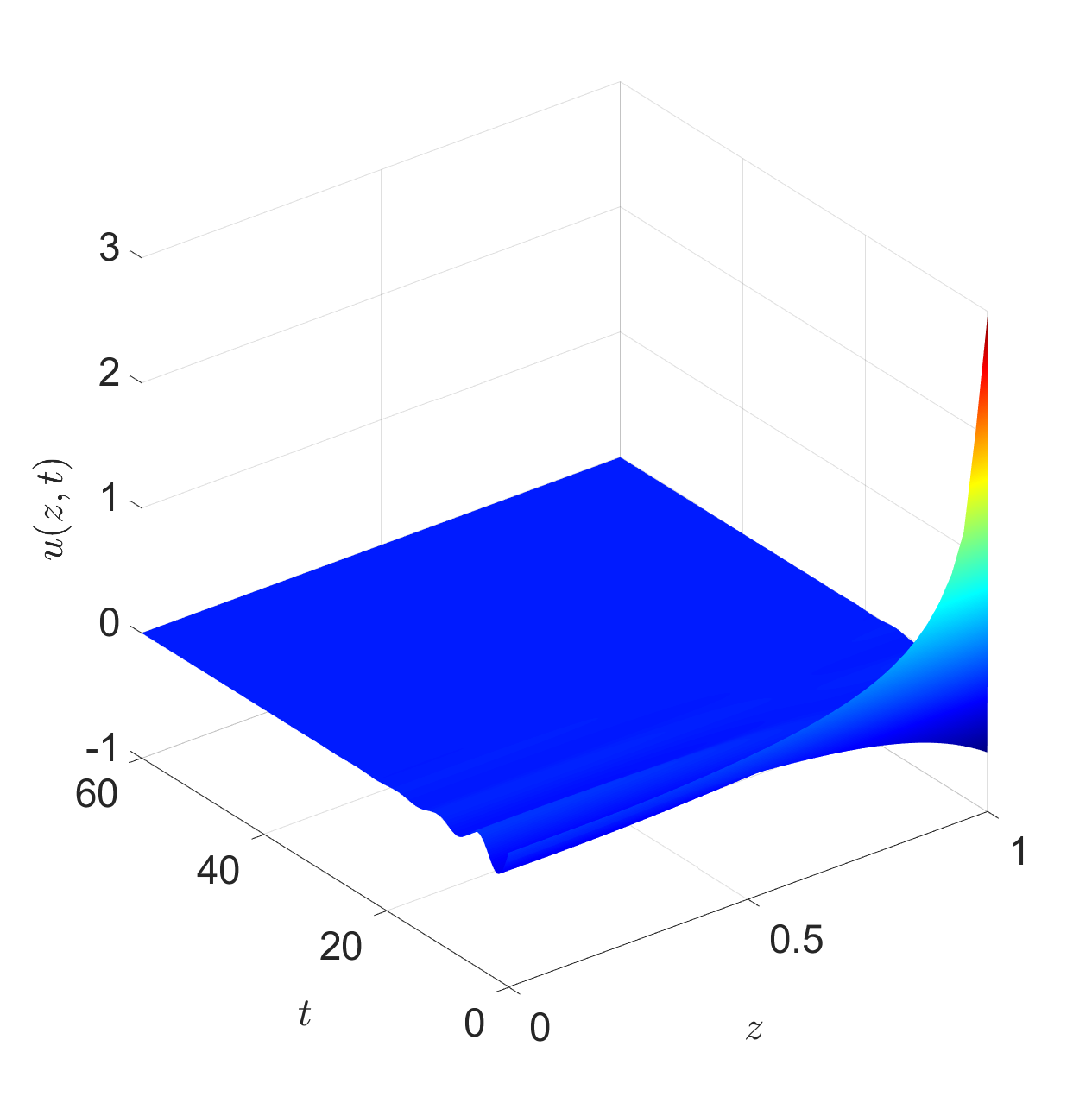}	
			}
			\caption{Evolution of $X$ and $u$ for the {closed-loop system} \eqref{original system}  with different {initial data in the absence of disturbances}}
			\label{fig2}
		\end{figure}

		Figure~\Rref{fig3}, Fig.~\Rref{fig4}, Fig.~\Rref{fig5}, and Fig.~\Rref{fig6} show that,   for different disturbances and initial data, the solutions to  the closed-loop system \eqref{original system}  remains bounded. For the same initial data, Figure~\Rref{fig3} (or Fig.~\Rref{fig4}) and Fig.~\Rref{fig5} (or Fig.~\Rref{fig6}) {present}  the evolution of the states for the closed-loop system \eqref{original system} with  different disturbances,  showing that the amplitude of the states  decreases as the   amplitude of disturbances   decreases. For the same  disturbances, Figure~\Rref{fig3}  and Fig.~\Rref{fig4} (or Fig.~\Rref{fig5} and Fig.~\Rref{fig6}) present the evolution of the states of closed-loop system \eqref{original system} with different initial data, showing that the amplitude of the states decreases as the amplitude of the initial data  decreases. 		
		In addition,  in accordance  to   Fig.~\Rref{fig2}, Fig.~\Rref{fig3}, Fig.~\Rref{fig4}, Fig.~\Rref{fig5}, and Fig.~\Rref{fig6}, the evolution of the states' norms for the closed-loop system \eqref{original system} with different disturbances (solid  or dashed lines in different colors) and initial data (lines in the same color) are presented   {in  Fig.~\Rref{fig7}(a) (or Fig.~\Rref{fig7}(b))}.  Especially, the black solid and dashed curves show  that the {states'} norms of the closed-loop system \eqref{original system} converge rapidly to the origin in the absence of disturbances for different initial data, and the bound of the states' norms decreases as the bound of the initial data decreases. The red and blue solid curves (or the red and blue dashed curves) show that,  for the same initial data, the bound of the states' norms  decreases as the amplitude of the disturbances diminishes.
		Notably, the ISS property of a system implies that trajectories under different initial data are driven by the same decay rate. Therefore, the long-time behavior  of the states is mainly determined by the disturbances rather than the initial data when  time tends to infinity, as shown by the solid and dashed lines in  Fig.~\Rref{fig7}.  	Overall,  {Fig.~\Rref{fig2}- Fig.\Rref{fig7}} well  depict the ISS of system~\eqref{original system} in closed loop.
		
				\begin{figure}[htbp!]
			\centering
			\subfigure[ Evolution of  $X$ when $j=j_0=2$]{
				\includegraphics[width=0.35\textwidth,height=6pc]{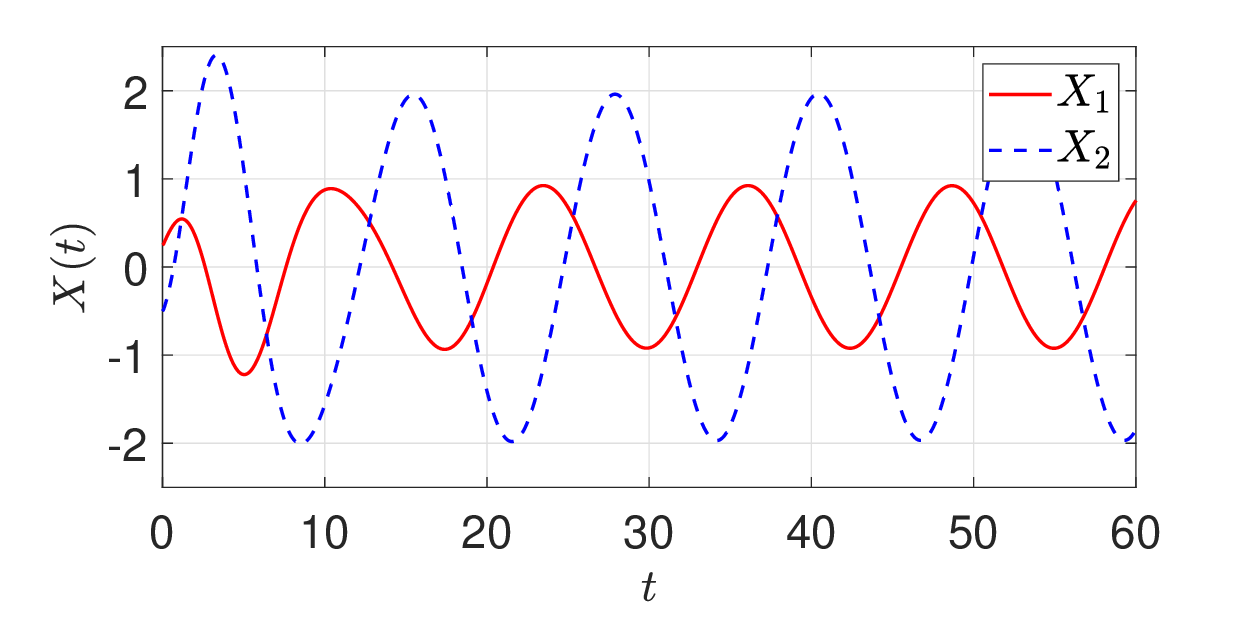}	
			}\noindent\hspace{10pt}
			\subfigure[ Evolution of  $X$ when $j=j_0=4$]{
				\includegraphics[width=0.35\textwidth,height=6pc]{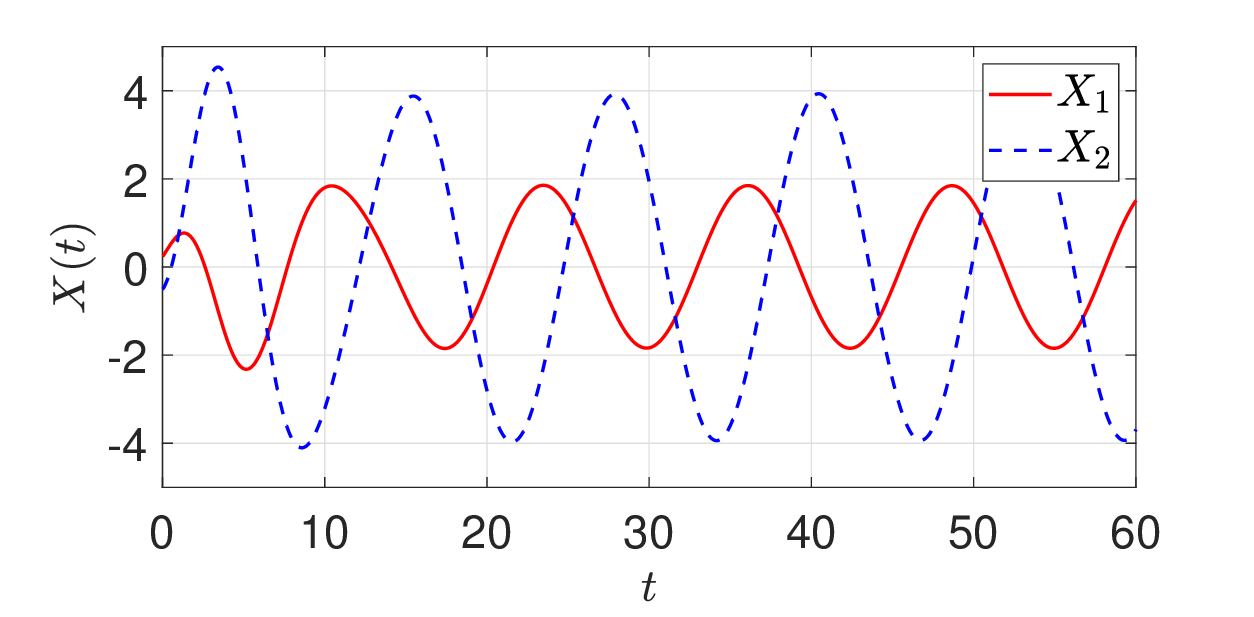}
			}
			\subfigure[ Evolution of  $u$ when $j=j_0=2$]{
				\includegraphics[width=0.35\textwidth,height=10pc]{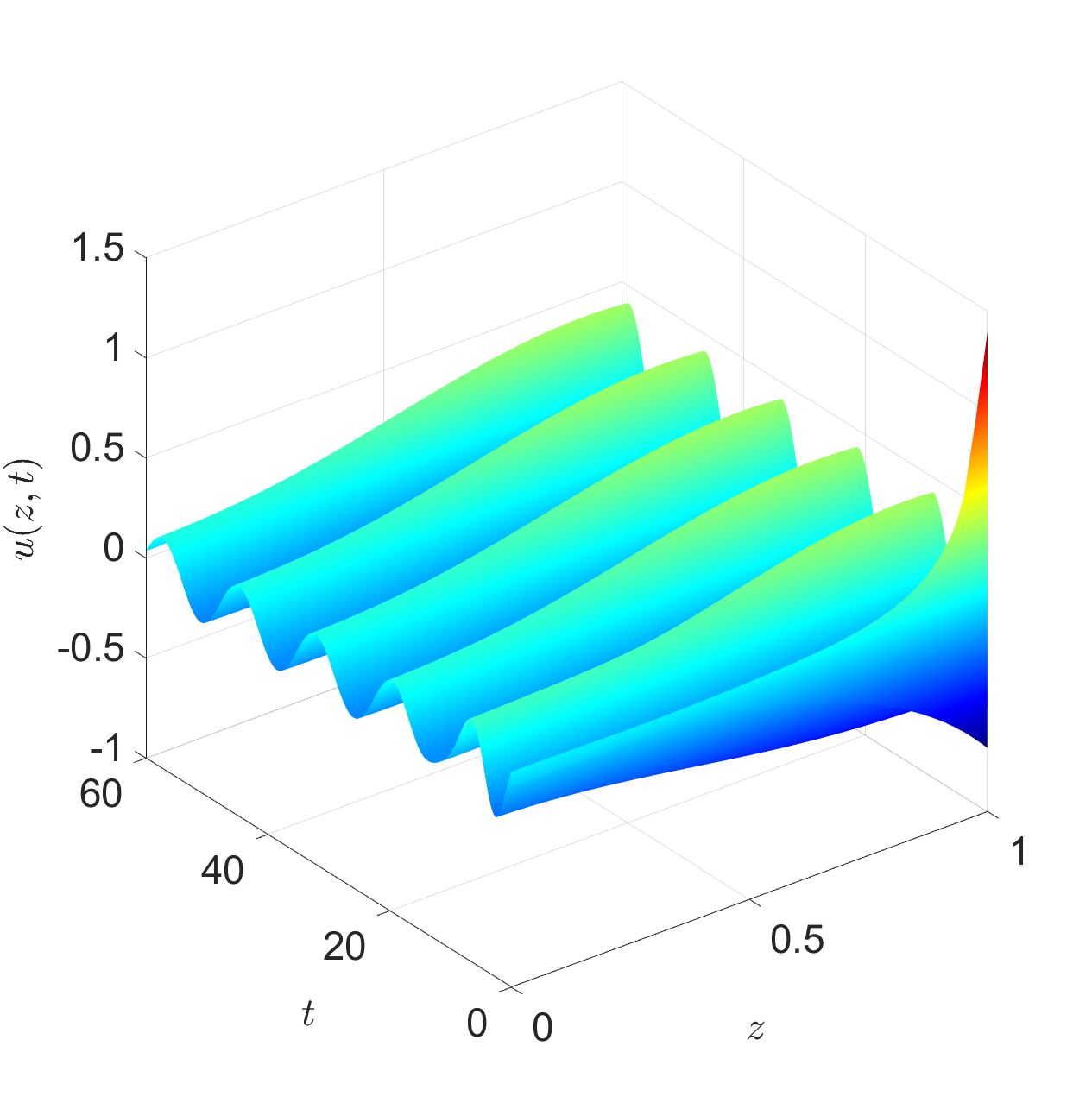}	
			}\noindent\hspace{10pt}
			\subfigure[  Evolution of  $u$ when $j=j_0=4$]{
				\includegraphics[width=0.35\textwidth,height=10pc]{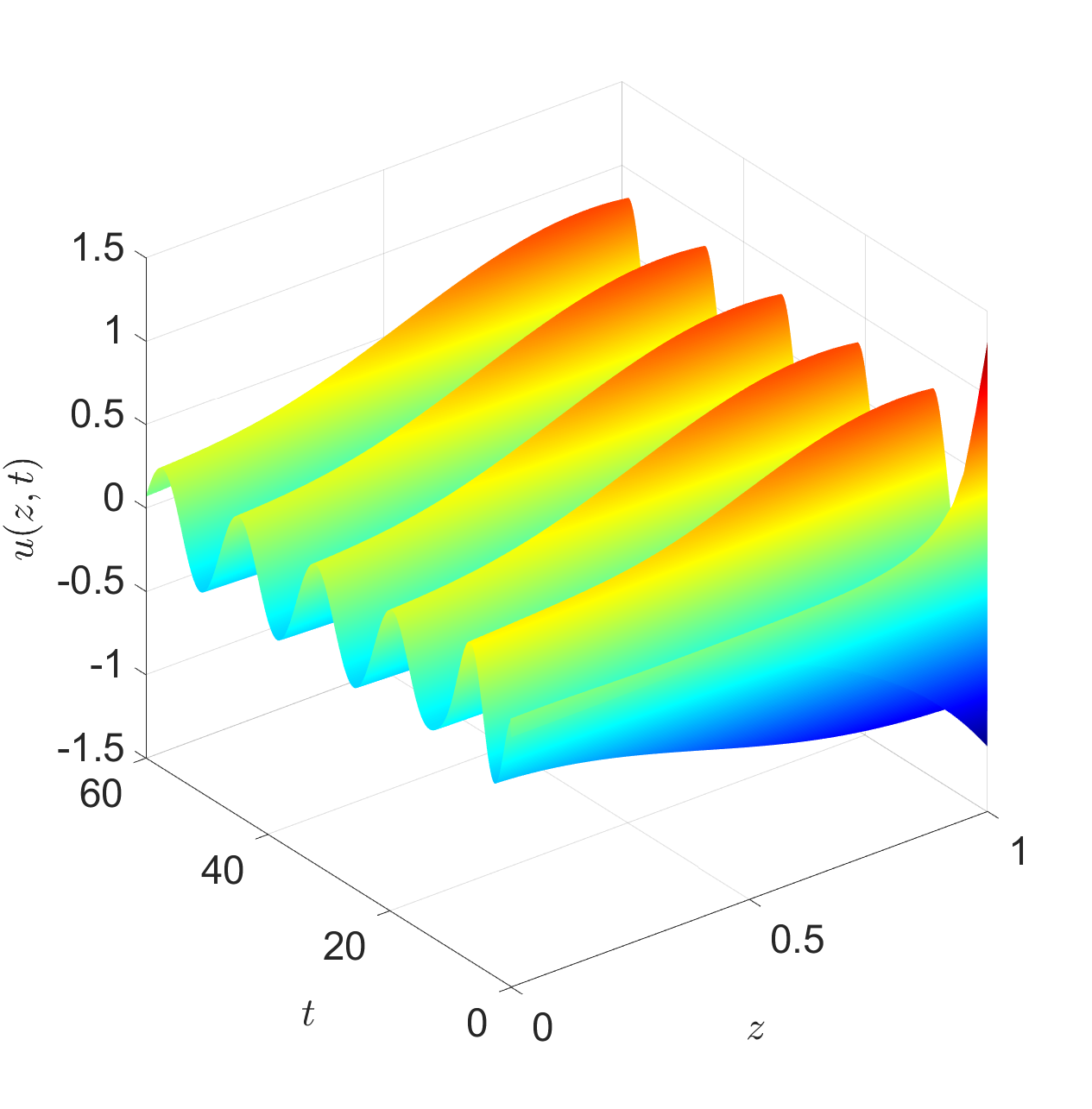}
			}
			\caption{Evolution of $X$ and $u$  {for the closed-loop system \eqref{original system} in the presence of different internal and in-domain disturbances}     {when $m_0=1$ and $j_1=j_2=0$}}
			\label{fig3}
		\end{figure}
		
		\begin{figure}[htbp!]
			\centering
			\subfigure[ Evolution of  $X$ when $j=j_0=2$]{
				\includegraphics[width=0.35\textwidth,height=6pc]{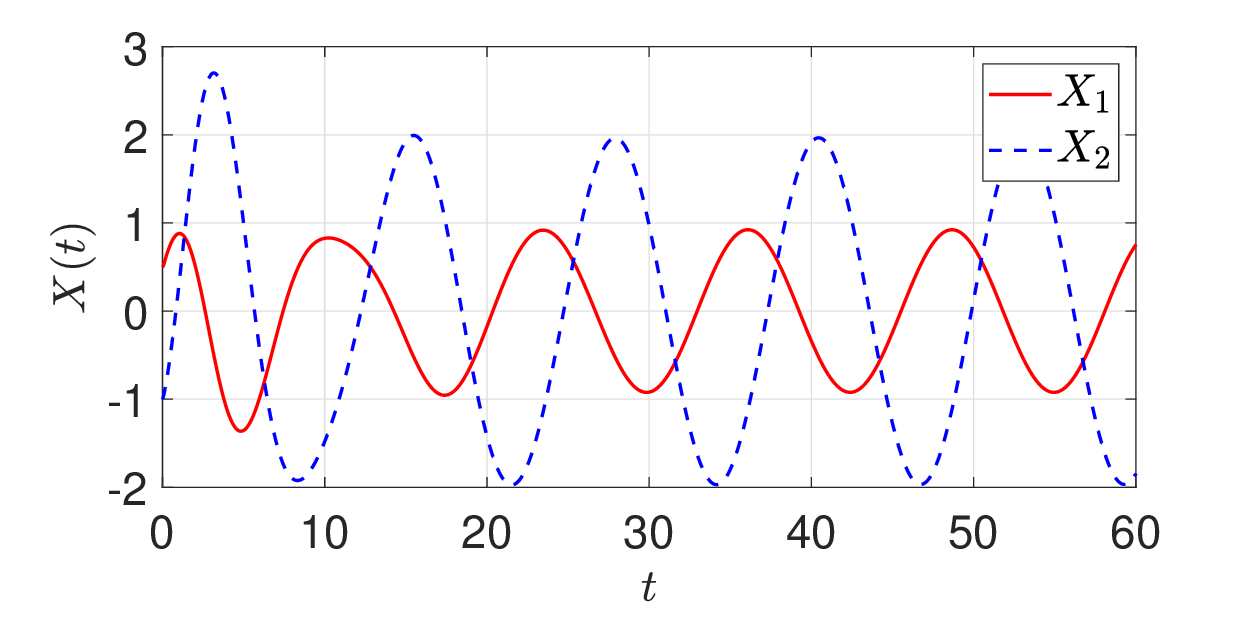}	
			}\noindent\hspace{10pt}
			\subfigure[ Evolution of  $X$ when $j=j_0=4$]{
				\includegraphics[width=0.35\textwidth,height=6pc]{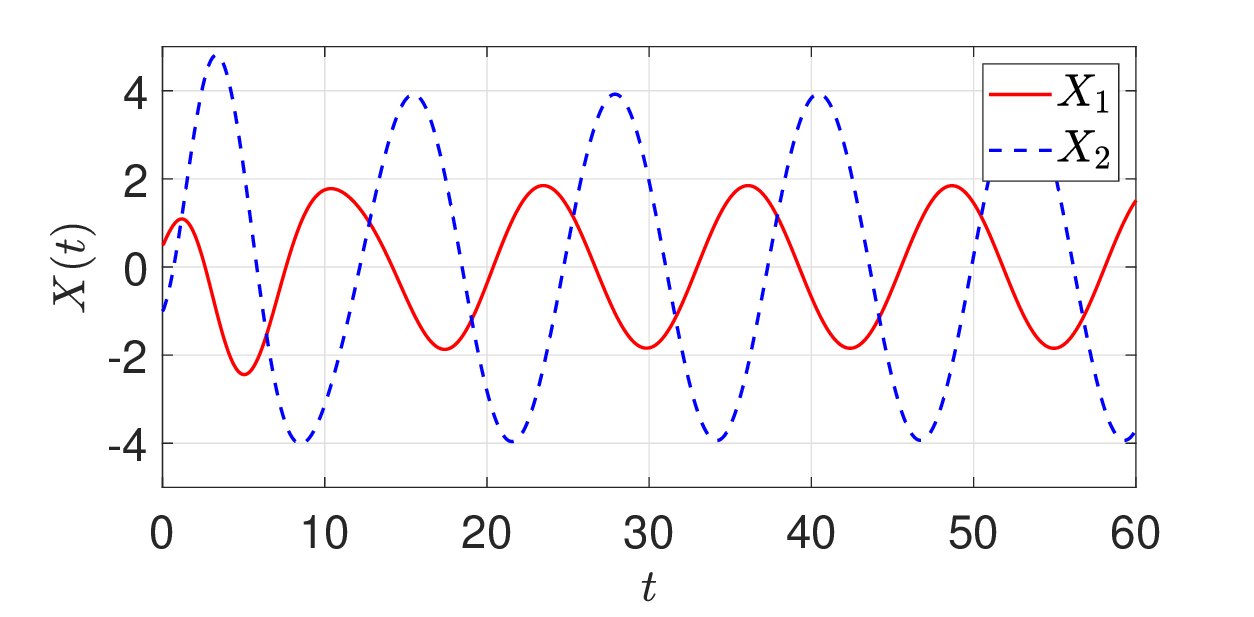}
			}
			\subfigure[ Evolution of  $u$ when $j=j_0=2$]{
				\includegraphics[width=0.35\textwidth,height=10pc]{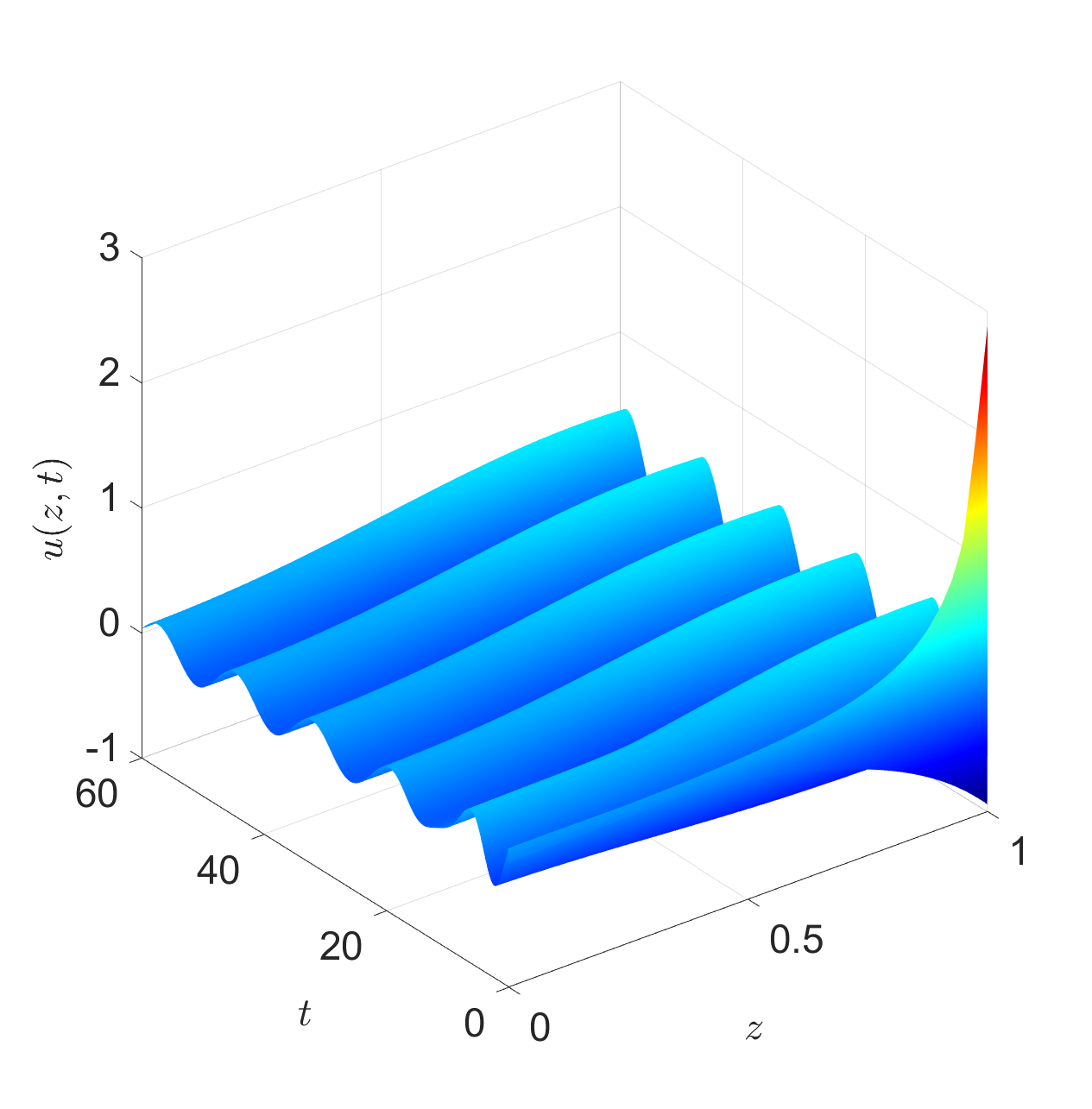}	
			}\noindent\hspace{10pt}
			\subfigure[  Evolution of  $u$ when $j=j_0=4$]{
				\includegraphics[width=0.35\textwidth,height=10pc]{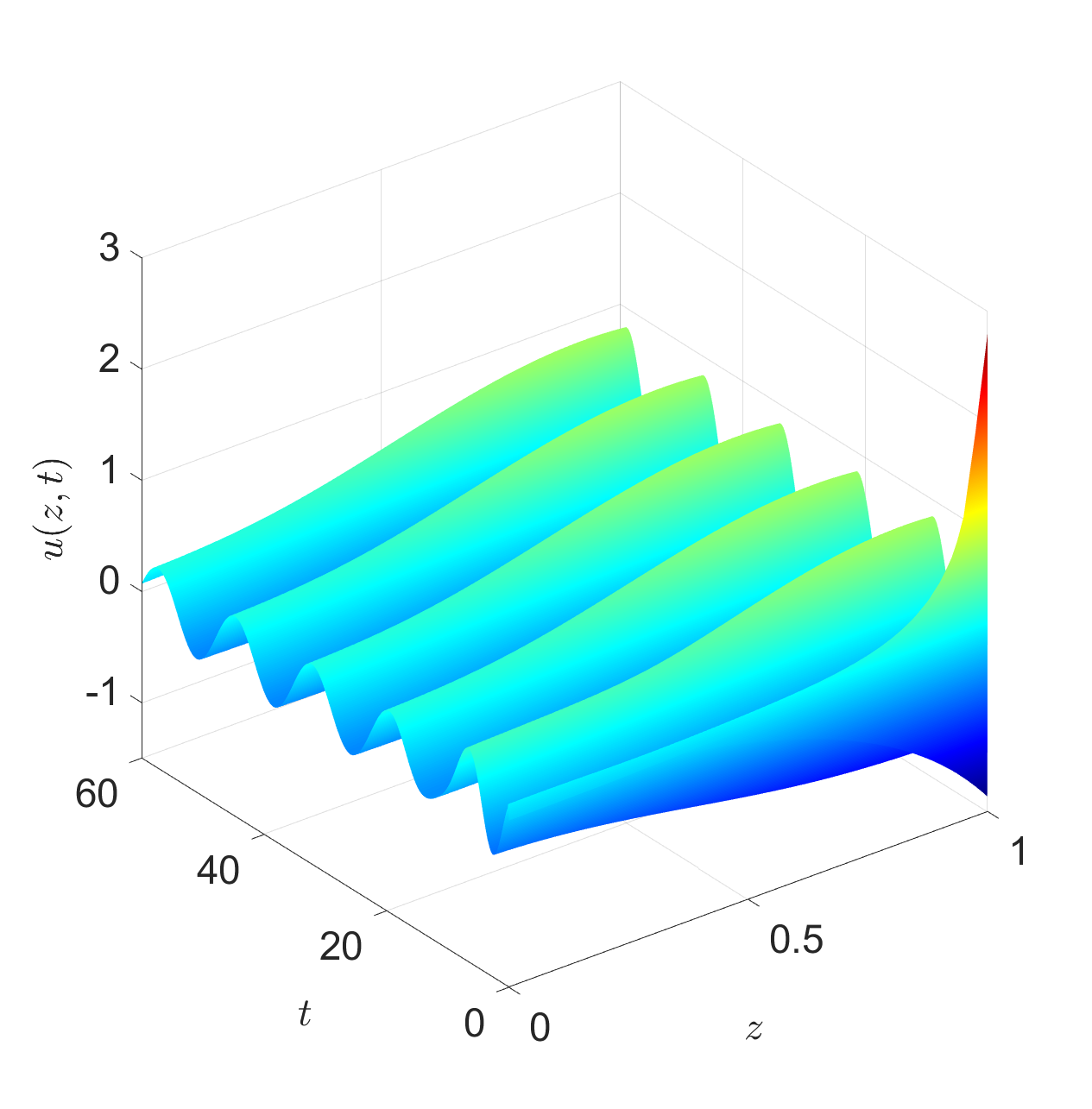}
			}
			\caption{Evolution of $X$ and $u$   {for the closed-loop system \eqref{original system} in the presence of different internal and in-domain disturbances}  {when $m_0=2$ and $j_1=j_2=0$}}
			\label{fig4}
		\end{figure}
		\begin{figure}[htbp!]
			\centering
			\subfigure[Evolution of  $X$ when $j_1=j_2=3$]{
				%		\label{fig:3}
				\includegraphics[width=0.35\textwidth,height=6pc]{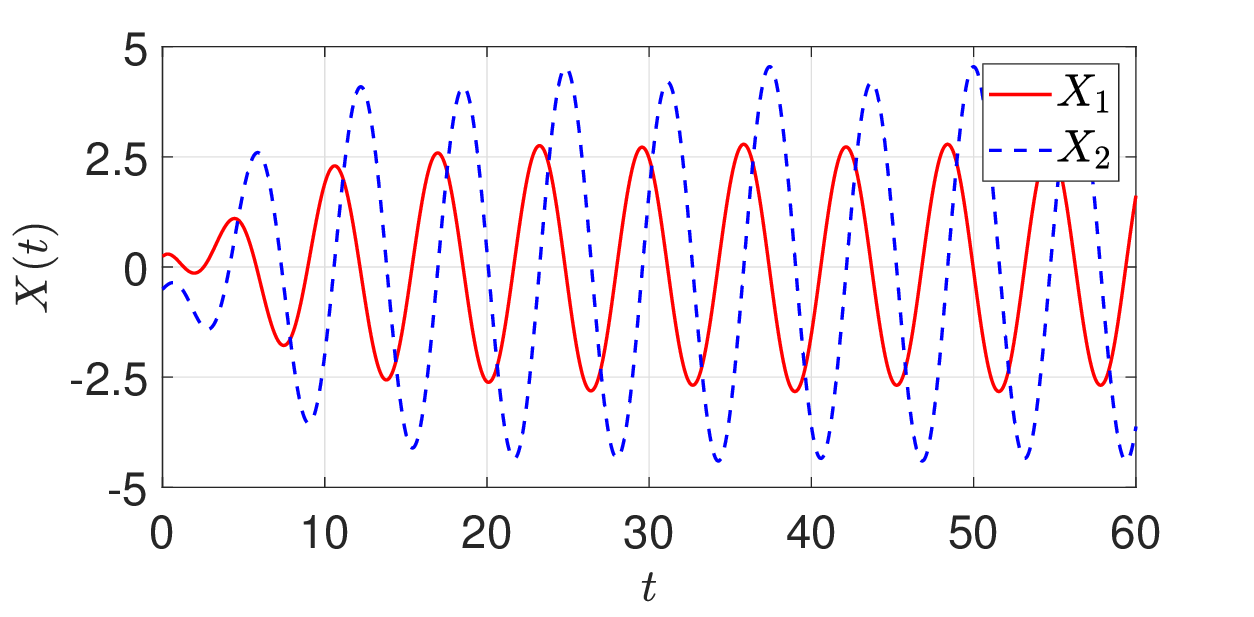}
			}\noindent\hspace{10pt}
			\subfigure[Evolution of  $X$ when $j_1=j_2=5$]{
				%		\label{fig:4}
				\includegraphics[width=0.35\textwidth,height=6pc]{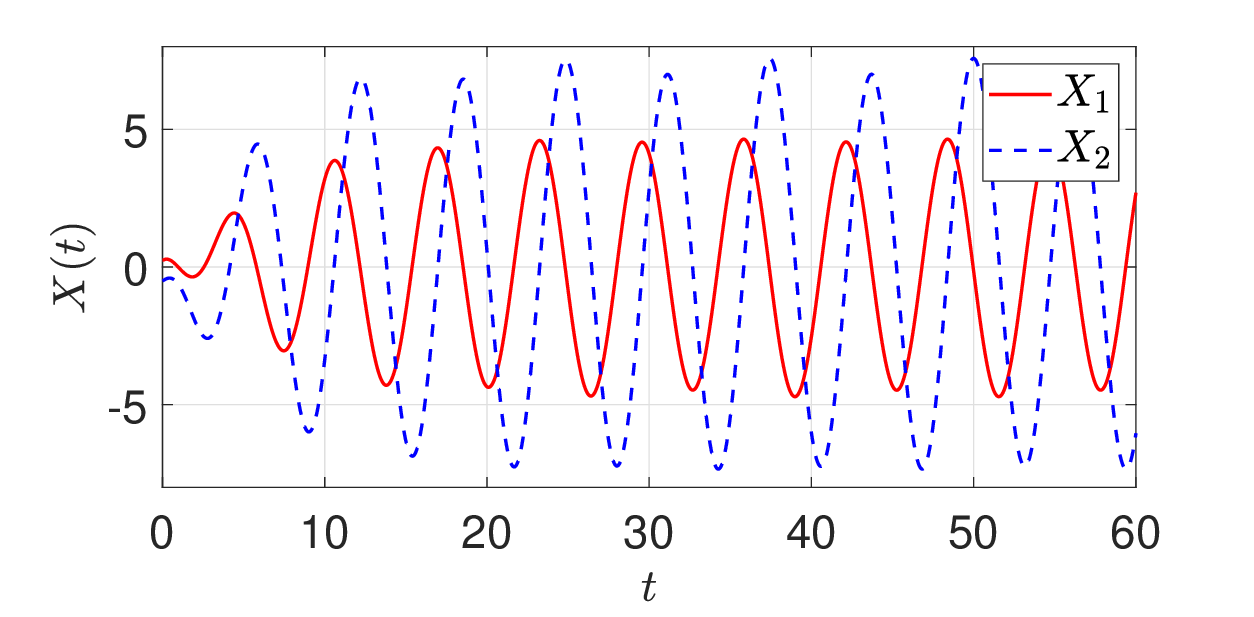}
			}
			\subfigure[ Evolution of  $u$ when $j_1=j_2=3$]{
				%		\label{fig:3}
				\includegraphics[width=0.35\textwidth,height=10pc]{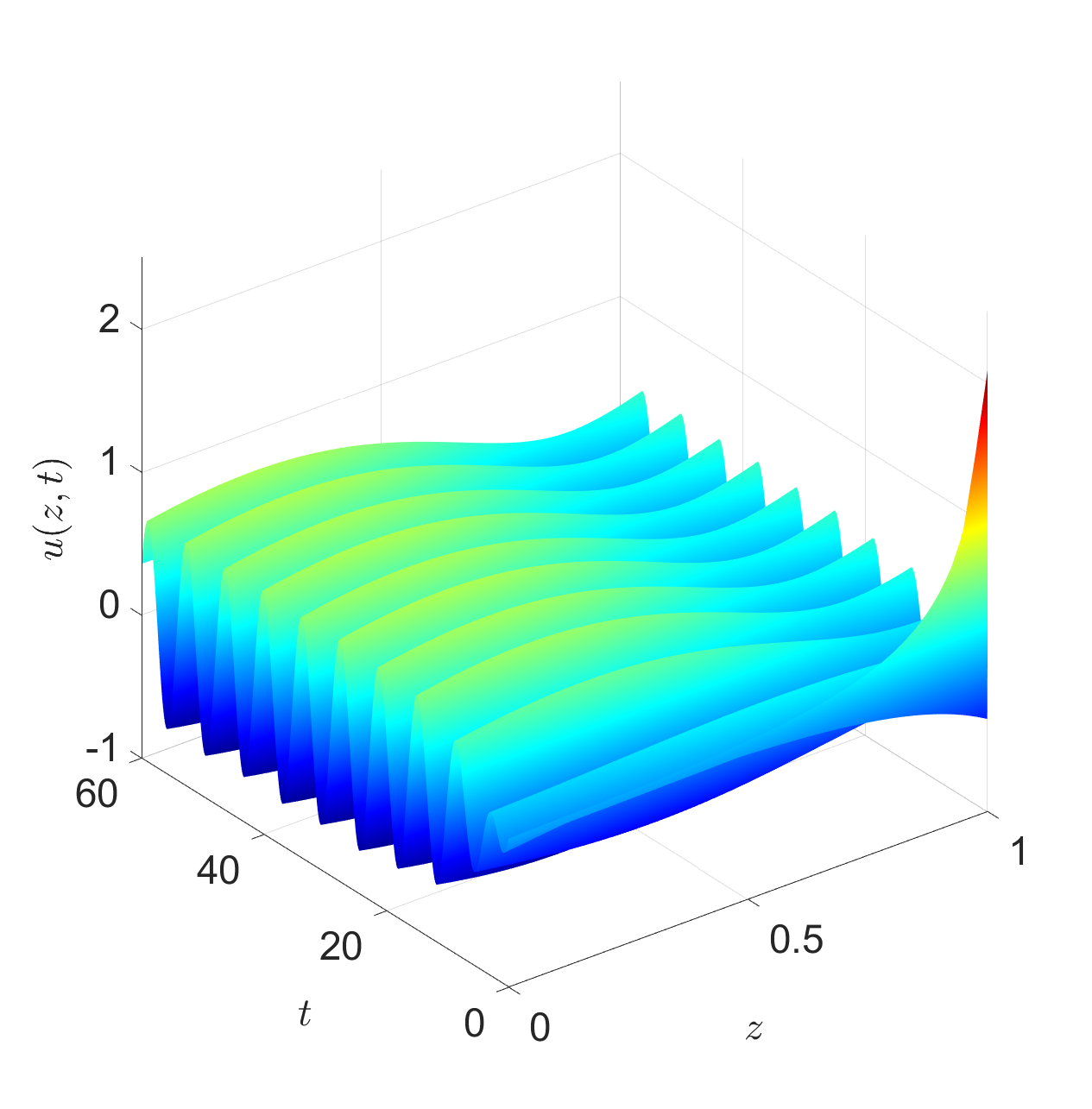}
			}\noindent\hspace{10pt}
			\subfigure[Evolution of  $u$ when $j_1=j_2=5$]{
				%		\label{fig:4}
				\includegraphics[width=0.35\textwidth,height=10pc]{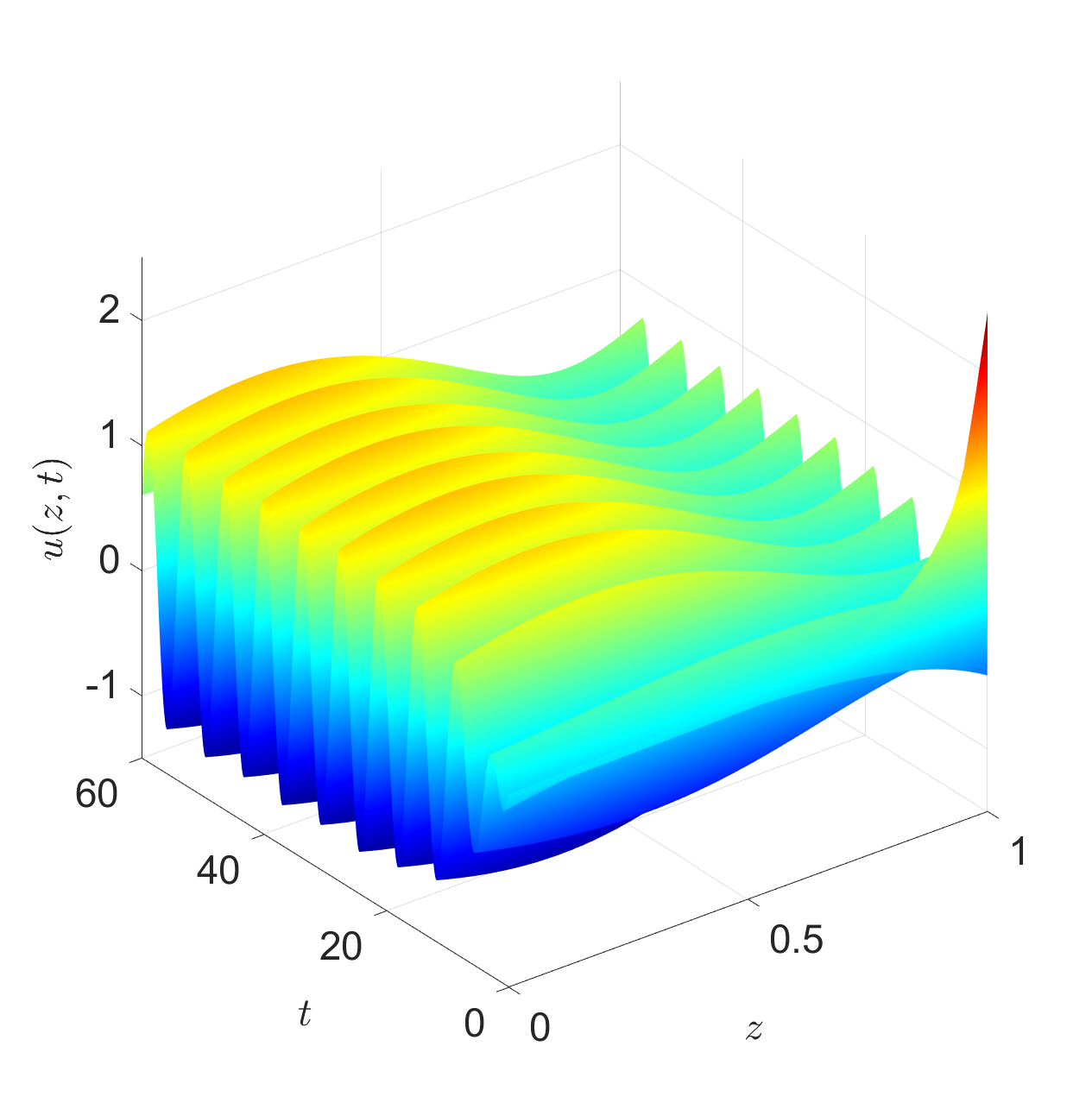}
			}
			\caption{Evolution of $X$ and $u$ {for the closed-loop system \eqref{original system} in the presence of} different boundary disturbances {when  $m_0=1$ and $j=j_0=0$}}
			\label{fig5}
		\end{figure}

\begin{figure}[t!]
	\centering
	\subfigure[Evolution of  $X$ when $j_1=j_2=3$]{
		%		\label{fig:3}
		\includegraphics[width=0.35\textwidth,height=6pc]{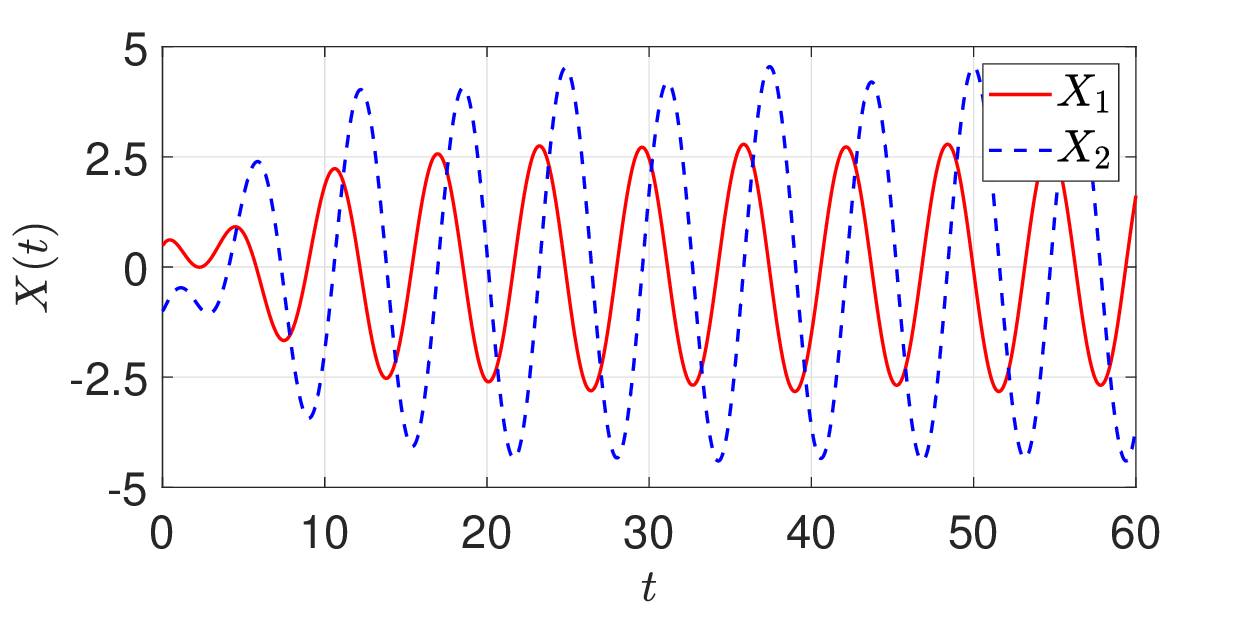}
	}\noindent\hspace{10pt}
	\subfigure[Evolution of  $X$ when $j_1=j_2=5$]{
		%		\label{fig:4}
		\includegraphics[width=0.35\textwidth,height=6pc]{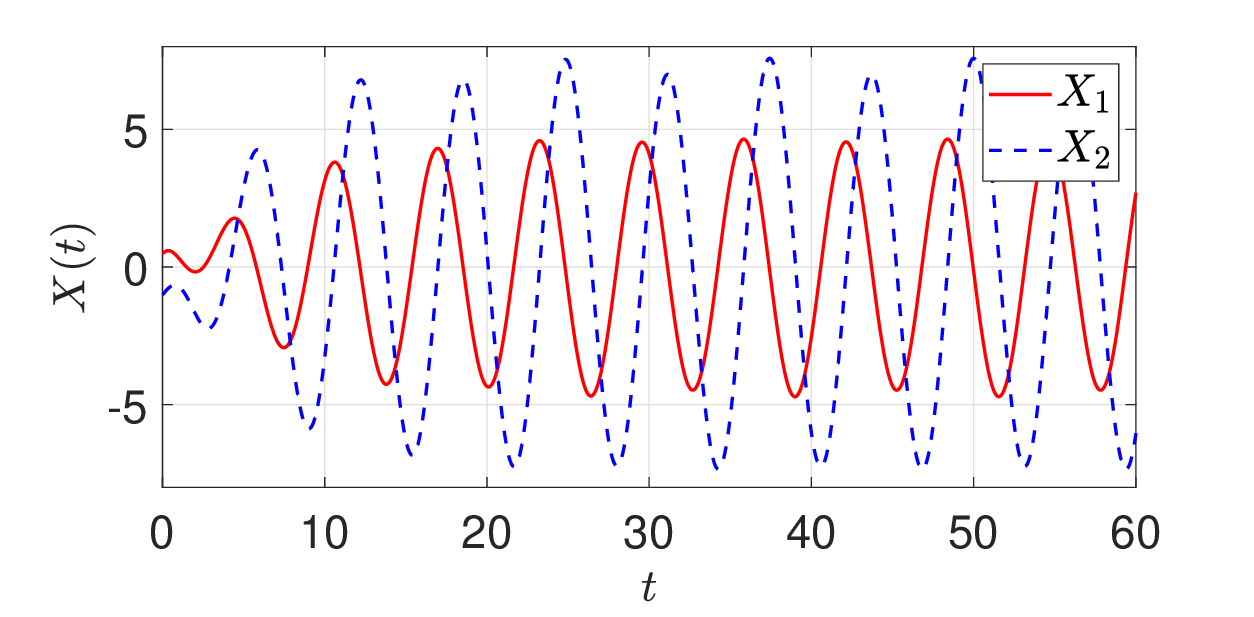}
	}
		\subfigure[ Evolution of  $u$ when $j_1=j_2=3$]{
		%		\label{fig:3}
		\includegraphics[width=0.35\textwidth,height=10pc]{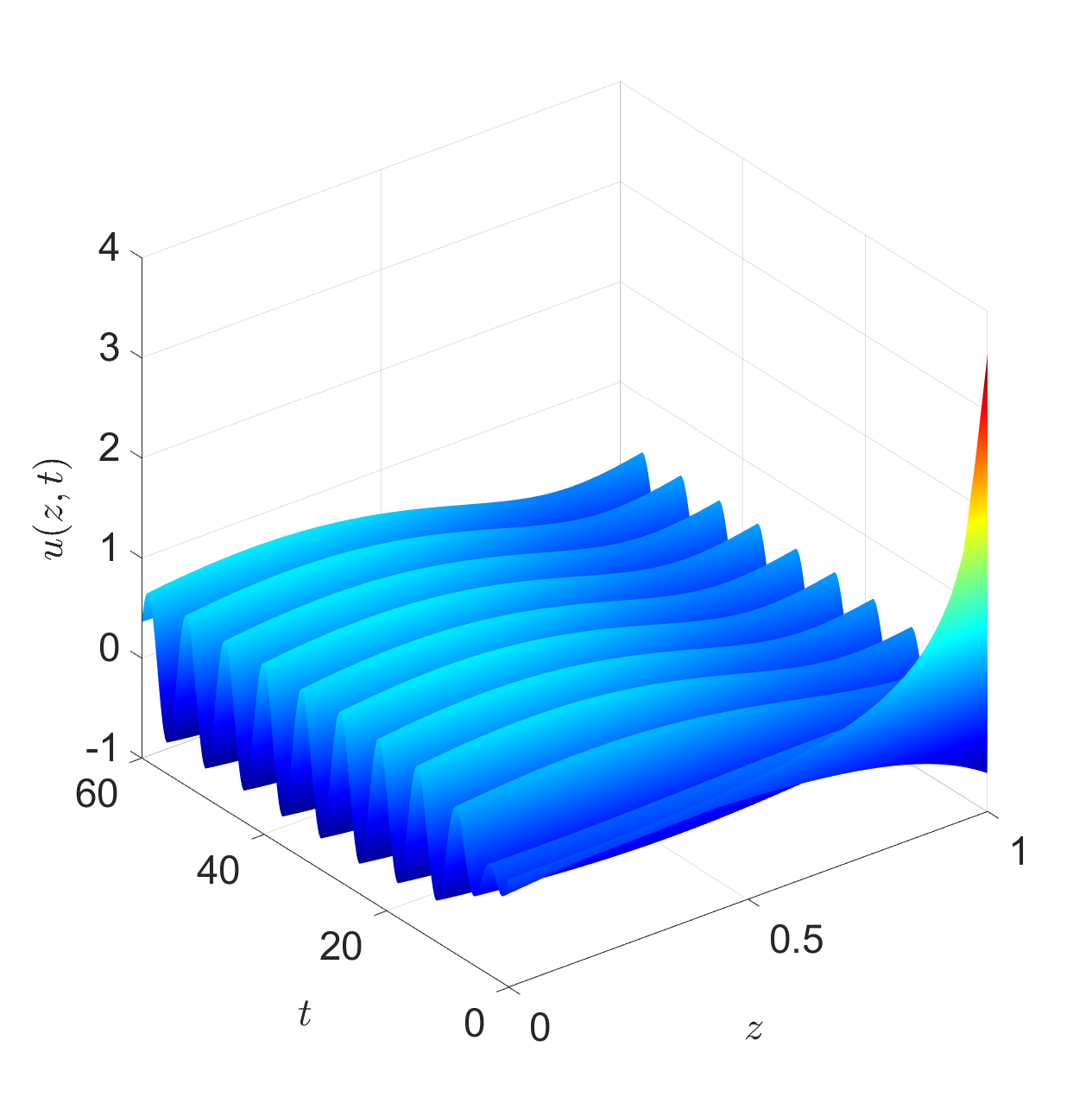}
	}\noindent\hspace{10pt}
	\subfigure[Evolution of  $u$ when $j_1=j_2=5$]{
		%		\label{fig:4}
		\includegraphics[width=0.35\textwidth,height=10pc]{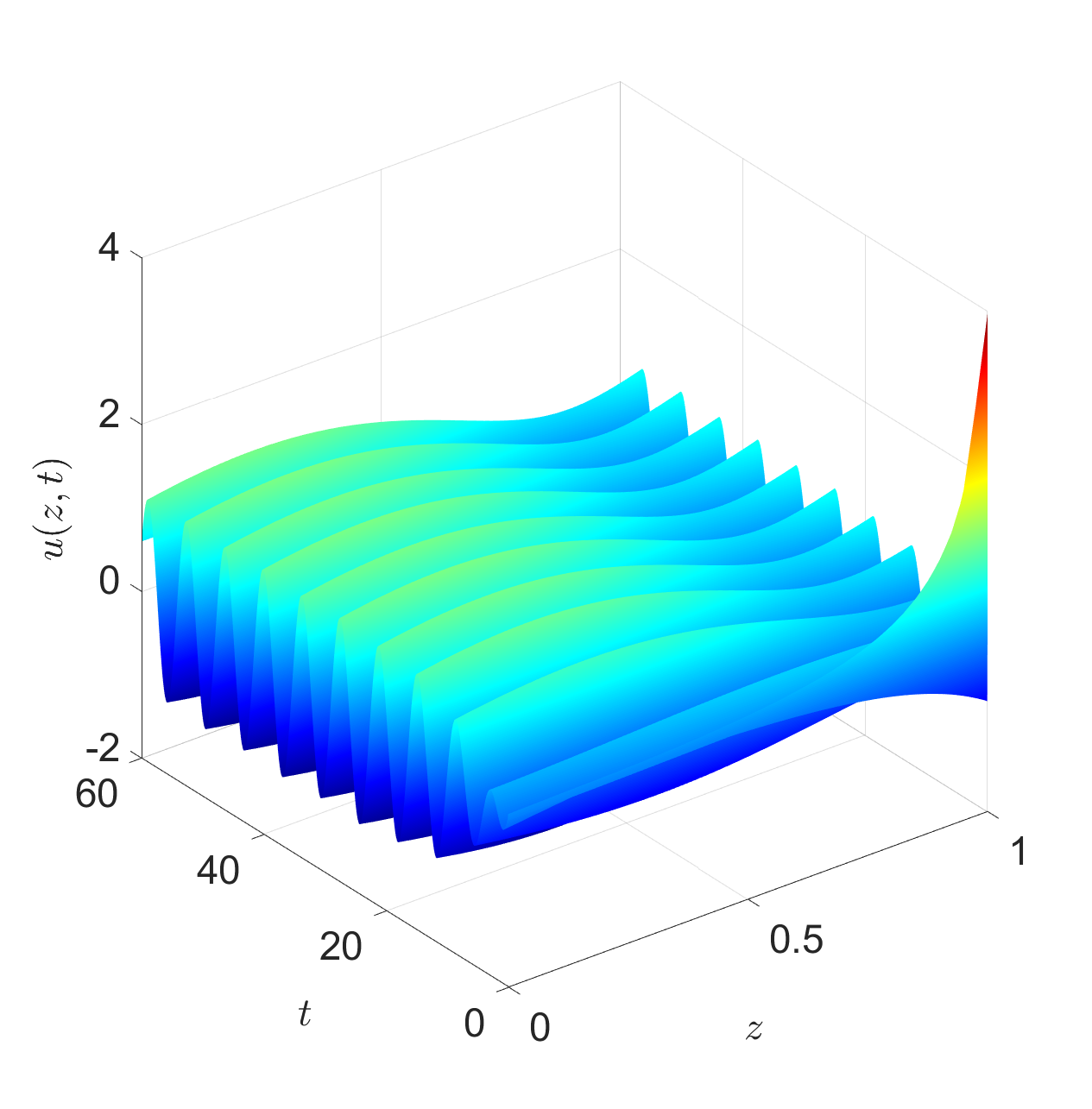}
	}
	\caption{Evolution of $X$ and $u$ {for the closed-loop system \eqref{original system} in the presence of} different boundary disturbances  {when $m_0=2$ and $j=j_0=0$}}
	\label{fig6}
\end{figure}

\begin{figure}[htbp]
	\centering
	\subfigure[{Evolution of $|X(t)|+\sup_{z\in (0,1)}\left|u(z,t)\right|$ {for the closed-loop system \eqref{original system} in the presence of}   {different internal      and in-domains disturbances and initial data} {when $j_1=j_2=0$}} ]{
		%		\label{fig:3}
		\includegraphics[width=0.35\textwidth,height=6pc]{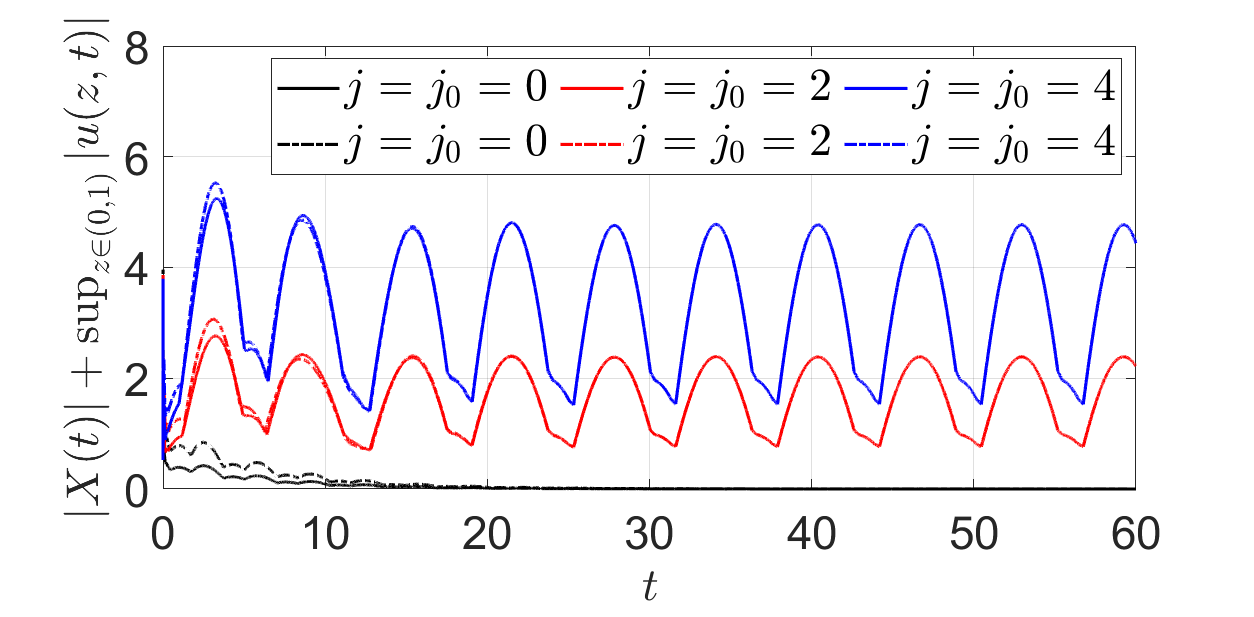}
	}\noindent\hspace{10pt}
	\subfigure[{Evolution of $|X(t)|+\sup_{z\in (0,1)}\left|u(z,t)\right|$ {for the closed-loop system \eqref{original system} in the presence of}  different boundary disturbances and initial data {when $j=j_0=0$}}]{
		%		\label{fig:4}
		\includegraphics[width=0.35\textwidth,height=6pc]{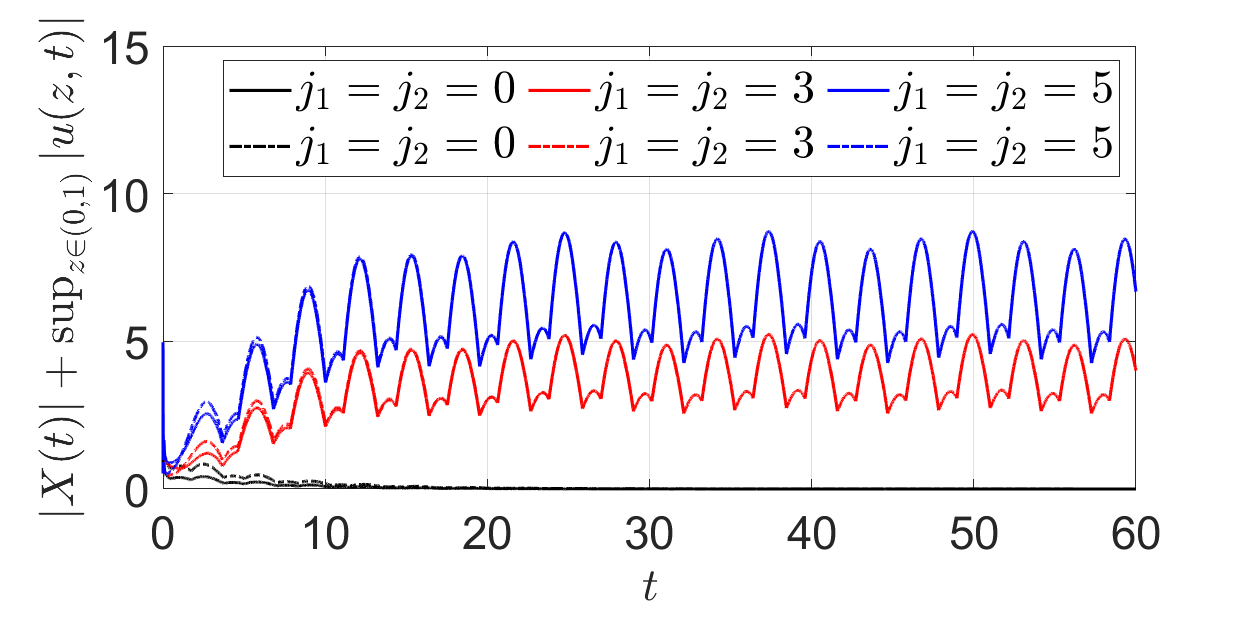}
	}
	\caption{Evolution of  $|X(t)|+\sup_{z\in (0,1)}\left|u(z,t)\right|$  {for the closed-loop system \eqref{original system} in the presence of}  different disturbances {and initial data}}
	\label{fig7}
\end{figure}

\section{Conclusion}\label{conclusion}
This paper addressed the problem of input-to-state stabilization for an ODE cascaded by a parabolic equation in the presence of Dirichlet-Robin boundary disturbances and in-domain disturbances. We employed the backstepping method  to design  a continuous state feedback controller, which allows us to avoid the use of sliding mode control and the boundedness parameter of disturbances.
Then, we used the generalized Lyapunov method to establish the ISS in the $\sup$-norm for the cascaded system with different  disturbances. Notably, under a generic setting, we  showed how to assess the ISS for parabolic PDEs with  space-time-varying coefficients in the presence of different  disturbances.  % For well-posedness analysis,   semigroup method  was employed to prove the well-posedness of system's  solution  in the space of continuous functions.
Future work includes  designing  an observer and an output feedback controller for a broader class of coupled ODE-PDE systems under the framework of ISS  and applying the ISS theory of PDEs to engineering problems.

\appendix
\section{Proof of Proposition \Rref{kernel exitence} (i)}\label{solution equation}
We prove Proposition \Rref{kernel exitence}  {(i)} by using the method of successive approximations as in, e.g.,\cite{Krstic2008book}. Let $\xi:=z+y$  and  $\eta:=z-y$, we have $\eta \in[0,1]$ and $\xi \in[\eta, 2-\eta]$. Let $G(\xi, \eta):=k(z, y)=k\left(\frac{\xi+\eta}{2}, \frac{\xi-\eta}{2}\right)$. Thus, the gain kernel PDE \eqref{kernel function} becomes
\begin{subequations}
	\begin{align}
		G _{\xi \eta}(\xi, \eta)=&\frac{1}{4} \left(b\left(\frac{\xi-\eta}{2}\right)+\lambda_0\left(\frac{\xi+\eta}{2}\right)\right) G(\xi, \eta), \label{varphi1}\\
		G_ \xi(\xi, 0)=&-\frac{1}{4}\left(b\left(\frac{\xi}{2}\right)+\lambda_0\left(\frac{\xi}{2}\right)\right) , \label{varphi2}\\
		q G(\xi, \xi)=& G_{\xi}(\xi, \xi)-G_\eta(\xi, \xi)-c(\xi)+\alpha(\xi)B+\int_{0}^{\xi}G(\xi+\tau,\xi-\tau)c(\tau)\text{d}\tau ,\label{varphi3} \\
		G(0,0)=&0.\notag
	\end{align}
\end{subequations}
Integrating \eqref{varphi1} w.r.t. $\eta$ from $0$ to $\eta$, and by \eqref{varphi2}, we get
\begin{align}\label{xi}
	G_{\xi}(\xi, \eta)=&-\frac{1}{4}\left(b\left(\frac{\xi}{2}\right)+\lambda_0\left(\frac{\xi}{2}\right)\right)+\frac{1}{4} \int_0^\eta \left(b\left(\frac{\xi-s}{2}\right)+\lambda_0\left(\frac{\xi+s}{2}\right)\right) G(\xi, s) \text{d} s.
\end{align}

 Next, we integrate \eqref{xi} w.r.t. $\xi$ from $\eta$ to $\xi$ to obtain
\begin{align}\label{xieta1}
	G(\xi, \eta)  =G(\eta, \eta)-\frac{1}{4} \int_\eta^{\xi}\left(b\left(\frac{\tau}{2}\right)+\lambda_0\left(\frac{\tau}{2}\right)\right) \text{d} \tau +\frac{1}{4} \int_\eta^\xi \int_0^\eta\left(b\left(\frac{\tau-s}{2}\right)+\lambda_0\left(\frac{\tau+s}{2}\right)\right) G(\tau ,s) \text{d} s \text{d} \tau.
\end{align}
By \eqref{varphi3}, we get
\begin{align*}
	\frac{\text{d}}{\text{d} \xi}(G(\xi, \xi))  =&G_\xi(\xi, \xi)+G_\eta(\xi, \xi)\notag\\
	=&2G_\xi(\xi, \xi)+\int_0^\xi G(\xi+\tau, \xi-\tau)c (\tau)\text{d} \tau-c(\xi) +\alpha(\xi) B-q G(\xi, \xi) \notag\\
	= & -\frac{1}{2}\left(b\left(\frac{\xi}{2}\right)+\lambda_0\left(\frac{\xi}{2}\right)\right)+\frac{1}{2} \int_0^{\xi}\left(b\left(\frac{\xi-s}{2}\right)+\lambda_0\left(\frac{\xi+s}{2}\right)\right) G(\xi, s)\text{d} s \notag\\
	& +\int_0^{\xi} G(\xi+\tau, \xi-\tau) c(\tau) \text{d} \tau-c(\xi)  +\alpha(\xi) B-q G(\xi, \xi).
\end{align*}
According to the method of variation of constants, we have
\begin{align*}
	G(\xi, \xi)=&\frac{1}{2} \int_0^{\xi}  \int_0^\tau e^{q(\tau-\xi)}\left(b\left(\frac{\tau-s}{2}\right)+\lambda_0\left(\frac{\tau+s}{2}\right)\right)G(\tau, s) \text{d} s \text{d} \tau
	 +\int_0^{\xi}  \int_0^\tau e^{q(\tau-\xi)}G(\tau+s, \tau-s)c (s) \text{d} s \text{d} \tau \\
	& +\int_0^\xi e^{q(\tau-\xi)}\left(\alpha(\tau) B-c(\tau)-\frac{1}{2}\left(b\left(\frac{\tau}{2}\right)+\lambda_0\left(\frac{\tau}{2}\right)\right)\right) \text{d} \tau.
\end{align*}
Substituting $\eta$ for $\xi$ yields
\begin{align}\label{G(eta,eta)}
	G(\eta, \eta)= & \frac{1}{2} \int_0^\eta  \int_0^\tau e^{q(\tau-\eta)}\left(b\left(\frac{\tau-s}{2}\right)+\lambda_0\left(\frac{\tau+s}{2}\right)\right) G(\tau, s) \text{d} s \text{d} \tau
	 +\int_0^\eta  \int_0^\tau e^{q(\tau-\eta)}G(\tau+s, \tau-s) c(s) \text{d} s \text{d} \tau \notag\\
	& +\int_0^\eta e^{q(\tau-\eta)}\left(\alpha(\tau) B-c(\tau)-\frac{1}{2}\left(b\left(\frac{\tau}{2}\right)+\lambda_0\left(\frac{\tau}{2}\right)\right)\right) {\text{d} \tau}.
\end{align}
By plugging \eqref{G(eta,eta)} into \eqref{xieta1}, we  get
\begin{align*}
	G(\xi, \eta)=&\int_0^\eta e^{q(\tau-\eta)}\left(\alpha(\tau) B-c(\tau)-\frac{1}{2}\left(b\left(\frac{\tau}{2}\right)+\lambda_0\left(\frac{\tau}{2}\right)\right)\right) \text{d} \tau -\frac{1}{4} \int_\eta^\xi\left(b\left(\frac{\tau}{2}\right)+\lambda_0\left(\frac{\tau}{2}\right)\right) \text{d} \tau\\
	&+\frac{1}{4} \int_\eta^{\xi} \int_0^\eta\left(b\left(\frac{\tau-s}{2}\right)+\lambda_0\left(\frac{\tau+s}{2}\right)\right) G(\tau, s) \text{d} s \text{d} \tau \notag\\
	 &+\frac{1}{2} \int_0^\eta  \int_0^\tau e^{q(\tau-\eta)}\left(b\left(\frac{\tau-s}{2}\right)+\lambda_0\left(\frac{\tau+s}{2}\right)\right) G(\tau, s) \text{d} s \text{d} \tau \\
	& +\int_0^\eta  \int_0^\tau e^{q(\tau-\eta)}G(\tau+s, \tau-s)c( s) \text{d} s \text{d} \tau.
\end{align*}
We rewrite  the integral equation as
\begin{align}\label{integral equation}
	G(\xi,\eta)=G_0(\xi, \eta)+\Phi_G(\xi, \eta),
\end{align}
where
\begin{align*}
	G_0(\xi, \eta):=&\int_0^\eta e^{q(\tau-\eta)}\left(\alpha(\tau) B-c(\tau)-\frac{1}{2}\left(b\left(\frac{\tau}{2}\right)+\lambda_0\left(\frac{\tau}{2}\right)\right)\right) \text{d}\tau-\frac{1}{4} \int_\eta^{\xi}\left(b\left(\frac{\tau}{2}\right)+\lambda_0\left(\frac{\tau}{2}\right)\right) \text{d} \tau
\end{align*}
and
\begin{align*}
	\Phi_G(\xi, \eta):=&\frac{1}{4} \int_\eta^{\xi} \int_0^\eta\left(b\left(\frac{\tau-s}{2}\right)+\lambda_0\left(\frac{\tau+s}{2}\right)\right) G(\tau, s) \text{d}s\text{d} \tau
	\notag\\
	& +\frac{1}{2} \int_0^\eta  \int_0^\tau e^{q(\tau-\eta)}\left(b\left(\frac{\tau-s}{2}\right)+\lambda_0\left(\frac{\tau+s}{2}\right)\right) G(\tau, s) \text{d} s \text{d}\tau \\
	& +\int_0^\eta  \int_0^\tau e^{q(\tau-\eta)}G(\tau+s, \tau-s) c(s) \text{d} s \text{d} \tau.
\end{align*}
Define the sequence ${G_n(\xi,\eta)}$ for $n\in \mathbb{N}_0$ via
\begin{align}\label{sequence}
	G_{n+1}(\xi,\eta):=G_0(\xi,\eta)+\Phi_{G_n}(\xi, \eta) ,
\end{align}
and let $\Delta G_n(\xi, \eta)=G_{n+1}(\xi, \eta)-G_n(\xi, \eta) $. It holds that
\begin{align*}
	\Delta G_{n+1}(\xi, \eta)=\Phi_{\Delta G_n}(\xi, \eta),
\end{align*}
and
\begin{align}\label{series}
	G_{n+1}(\xi, \eta)=G_0(\xi, \eta)+\sum_{j=0}^n \Delta G_j(\xi, \eta).
\end{align}

 By virtue of \eqref{sequence},   if ${G_{n}(\xi, \eta)}$ converges uniformly w.r.t. $(\xi, \eta)$ as $n\rightarrow\infty,$ then $G(\xi, \eta){:=}\lim_{n\rightarrow\infty}G_n(\xi,\eta)$ is a solution to the integral equation \eqref{integral equation}, thereby ensuring the existence of a kernel function $k$ satisfying the equation \eqref{kernel function}. In addition, in view of \eqref{series}, the convergence of ${G_n(\xi, \eta)}$ is equivalent to the convergence of the series $\sum_{j=0}^n \Delta G_j(\xi, \eta)$. Thus, it suffices to prove that the series $\sum_{j=0}^n \Delta G_j(\xi, \eta)$ converges uniformly w.r.t. $(\xi,\eta)$. Furthermore, recalling \eqref{equ4} and the choice of $\alpha, \lambda$, if we let \begin{align*}
 M:=&2\max\left\{  \sup_{z\in(0,1)} |\alpha(z) B|, 2\sup_{z\in(0,1)}\left|c(z)\right|,   \sup_{z\in(0,2)} \left|b\left(\frac{z}{2}\right)+\lambda_0\left(\frac{z}{2}\right)\right|\right\}<+\infty,
\end{align*}
then, it only need to prove that
\begin{align}\label{series1}
	\left|\Delta G_n(\xi, \eta)\right| \leq \frac{M^{n+2}}{(n+1) !}(\xi+\eta)^{n+1}
\end{align}
holds true for all $n\in \mathbb{N}_0$.

Now, we employ the method of induction to prove   \eqref{series1}. At first, since
\begin{align*}
	{\left|G_0(\xi, \eta)\right|}=&\left|\int_0^\eta e^{q(\tau-\eta)} \alpha(\tau) B \text{d} \tau-\int_0^\eta e^{q(\tau-\eta)} c(\tau) \text{d} \tau -\frac{1}{2} \int_0^\eta e^{q(\tau-\eta)}\left(b\left(\frac{\tau}{2}\right)+\lambda_0\left(\frac{\tau}{2}\right)\right) \text{d} \tau\right.\notag\\
	&\left.-\frac{1}{4} \int_\eta^{\xi}\left(b\left(\frac{\tau}{2}\right)+\lambda_0\left(\frac{\tau}{2}\right)\right) \text{d} \tau\right| \\
	\leq& \left(\frac{1}{2}M+\frac{1}{4}M+\frac{1}{4}M\right) \eta+\frac{1}{8}M\left(\xi-\eta\right) \\
	\leq&M \eta+\frac{1}{4}M(1-\eta) \\
	\leq& M ,
\end{align*}
it follows that
\begin{align*}
	\left|\Delta G_0(\xi, \eta)\right|= \left|\Phi_{G_0}\left(\xi,\eta\right)\right|
	\leq& \frac{1}{8} \int_\eta^{\xi} \int_0^\eta   M^2 \text{d} s \text{d} \tau+\frac{1}{4} \int_0^\eta \int_0^\tau M^2 \text{d} s \text{d} \tau +\frac{1}{4}\int_0^\eta \int_0^\tau  M^2 \text{d} s \text{d} \tau \\
	=&\frac{1}{8} M^2 \eta(\xi-\eta)+\frac{1}{8} M^2 \eta^2+\frac{1}{8} M^2 \eta^2 \\
	\leq& M^2(\xi+ \eta),
\end{align*}
which shows that \eqref{series1} holds true for $n{=}0$.

Assuming that \eqref{series1} holds for   $n\in \mathbb{N}$, we need to show that \eqref{series1} also holds for $n+1$. In fact, we have
\begin{align*}
	\left|\Delta G_{n+1}(\xi, \eta)\right| \leq& \frac{1}{8} \int_\eta^{\xi} \int_0^\eta  \frac{M^{n+3}}{(n+1)!}(\tau+ s)^{n+1} \text{d} s \text{d}  \tau +\frac{1}{4} \int_0^\eta \int_0^\tau \frac{M^{n+3}}{(n+1)!}(\tau+s)^{n+1} \text{d}  s \text{d}  \tau \\
	&+\frac{1}{4}\int_0^\eta\int_0^\tau  \frac{M^{n+2}}{(n+1)!}(2\tau)^{n+1} \text{d}  s \text{d}  \tau\\
	\leq& \frac{M^{n+3}}{8(n+2)!}\left(\xi+\eta\right)^{n+3}-\frac{M^{n+3}}{8(n+2)!}(2\eta)^{n+3}-\frac{M^{n+3}}{8(n+2)!}\xi^{n+3}+\frac{M^{n+3}}{8(n+2)!}\eta^{n+3}\notag\\
	&+\frac{M^{n+3}}{8(n+2)!}(2\eta)^{n+3}-\frac{M^{n+3}}{4(n+2)!}\eta^{n+3}
	+\frac{M^{n+2}}{16(n+2)!}(2\eta)^{n+3}\notag\\
	\leq&\frac{M^{n+3}}{8(n+2)!}(\xi+\eta)^{n+3}+\frac{M^{n+2}}{16(n+2)!}(\xi+\eta)^{n+3}\notag\\
	\leq	&\frac{M^{n+3}}{(n+2)!}(\xi+\eta)^{n+2}.
\end{align*}
 Therefore,  by induction, the estimate  \eqref{series1} holds true for all $n\in \mathbb{N}_0$.

Regarding the equivalence of the uniqueness of the solution to the kernel equation \eqref{kernel function} and the uniqueness of the solution to the integral equation \eqref{integral equation}, we  prove that any $C^2$-continuous solution $\overline{G}(\xi,\eta)$ to the equation \eqref{integral equation} can be approximated by the sequence $\{G_n(\xi,\eta)\}$ governed by \eqref{sequence}. More precisely, it suffices to show that
\begin{align}\label{original only}
	|G_n(\xi, \eta)-\overline{G}(\xi ,\eta)| \leq \frac{\delta M^{n}}{n !}(\xi+\eta)^n
\end{align}
holds true for all $n\in \mathbb{N}_0$, where $\delta:=\max_{\eta\in[0,1],\xi\in[\eta,2-\eta]} | \Phi_{\overline{G}}(\xi, \eta) |$. This is because \eqref{original only} ensures that ${G_n(\xi,\eta)}$ converges uniformly to $\overline{G}(\xi ,\eta)$ while the uniqueness of $ \lim_{n\rightarrow\infty}G_n(\xi,\eta)=:G(\xi, \eta)$ leads to $\overline G(\xi, \eta){\equiv }G(\xi ,\eta)$. Therefore, if the solution to \eqref{integral equation} is unique, then the solution to \eqref{kernel function} is also unique.

Now we prove \eqref{original only} by induction. First of all, since $\overline{G}(\xi, \eta)$ is a solution {to} the {equation} \eqref{integral equation},   applying the definition of $\Phi_{\overline{G}}$, we obtain that
\begin{align*}
	\left|G_0(\xi, \eta)-\overline{G}(\xi ,\eta)\right|=|G_0(\xi, \eta)-\left(G_0(\xi, \eta)+\Phi_{\overline{G}}\left(\xi,\eta\right)\right)|
	=\left|-\Phi_{\overline{G}}(\xi, \eta)\right| \leq \delta,
\end{align*}
which indicates that \eqref{original only} holds true for $n=0$.

Assuming that \eqref{original only} holds true for   $n{\in}\mathbb{N}$, we need to show that \eqref{original only} also holds true for $n+1$. In fact, by this assumption and the linearity of $\Phi_{H}$ w.r.t. $H$, we have
\begin{align*}
	 \left|G_{n+1}(\xi, \eta)-\overline{G}(\xi, \eta)\right|
	=&|\Phi _{G_ n}(\xi, \eta)-\Phi_{\overline{G}}(\xi, \eta)|\\
	=& \left|\Phi_{G_ n-\overline{G}}(\xi,\eta)\right| \\
	%\leq& \delta\left|\Phi_{\Delta G_{n-1}}\right| \\
	%\leq&\frac{\delta}{4} \int_\eta^{\xi} \int_0^\eta\left|\left(\Lambda\left(\frac{\tau-s}{2}\right)+\lambda\right) \Phi_{\Delta G_{n-1}}(\tau, s)\right|  \text{d}s\text{d} \tau\\
	%& +\frac{\delta}{2} \int_0^\eta  \int_0^\tau \left|e^{q(\tau-\eta)}\left(\Lambda\left(\frac{\tau-s}{2}\right)+\lambda\right) \Phi_{\Delta G_{n-1}}(\tau, s) \right|\text{d} s \text{d}\tau \\
	%& +\delta\int_0^\eta  \int_0^\tau \left|e^{q(\tau-\eta)}\Phi_{\Delta G_{n-1}}(\tau+s, \tau-s) c(s) \right|\text{d} s \text{d} \tau\notag\\
	\leq& \frac{\delta}{8}  \int_\eta^{\xi} \int_0^\eta \frac{M^{n+1}}{n!}(s+\tau)^n \text{d} s \text{d} \tau + \frac{\delta}{4}  \int_0^{\eta} \int_0^\tau\frac{M^{n+1}}{n!}(s+\tau)^n \text{d} s \text{d} \tau
	\notag\\
&+\frac{\delta}{4} \int_0^{\eta} \int_0^\tau \frac{M^{n+1}}{n!}(2\tau)^n\text{d} s \text{d} \tau\notag\\
	= & \frac{\delta}{8} \int_\eta^{\xi}\left(\frac{M^{n+1}}{(n+1)!}(\eta+\tau)^{n+1}-\frac{M^{n+1}}{(n+1)!} \tau^{n+1}\right) \text{d} \tau
	\notag\\
& +\frac{\delta}{4} \int_0^\eta\left(\frac{M^{n+1}}{(n+1)!}(2 \tau)^{n+1}-\frac{M^{n+1}}{(n+1)!} \tau^{n+1}\right)\text{d} \tau  +\frac{\delta}{8} \int_0^\eta \frac{M^{n+1}}{(n+1)!}(2 \tau)^{n+1} \text{d} \tau \\
	= &  \frac{\delta M^{n+1}}{8(n+2)!}(\eta+\xi)^{n+2}- \frac{\delta M^{n+1}}{8(n+2)!}(2 \eta)^{n+2}-\frac{\delta M^{n+1}}{8(n+2)!} \xi^{n+2} +\frac{\delta M^{n+1}}{8(n+2)!} \eta^{n+2} \notag\\
& + \frac{\delta M^{n+1}}{8(n+2)!}(2 \eta)^{n+2}
	 -\frac{\delta M^{n+1}}{4(n+2)!} \eta^{n+2}
	+ \frac{\delta M^{n+2}}{16(n+2)!}(2 \eta)^{n+2} \\
	\leq & \frac{\delta M^{n+1}}{(n+1)!}(\xi+\eta)^{n+1}.
\end{align*}
Therefore, \eqref{original only}   holds true for all  $n\in \mathbb{N}_0$.
\section{Proof of Proposition~\Rref{equivelent}}\label{system-equivelent}
We prove Proposition~\Rref{equivelent} in two steps.

\textbf{Step 1:}  prove that the  transformation    \eqref{transformation}, which maps $(X,u)$ to $(X,w)$, is invertible in  $C (\mathbb{R}_{>0};  \mathbb{R}^N)  \times    C(\mathbb{R}_{\geq0};C((0,1);\mathbb{R}))$. Note that the mapping from $(X,u)$ to $(X,w)$ is linear, it suffices to prove that it is bijective.  More precisely, for any given  $(X,w)\in C (\mathbb{R}_{>0};  \mathbb{R}^N)  \times    C(\mathbb{R}_{\geq0};C((0,1);\mathbb{R}))$,  it suffices to prove that there exists a unique function  $u\in C(\mathbb{R}_{\geq0};C((0,1);\mathbb{R}))$ such that \eqref{transformation4b} is fulfilled. Furthermore,  letting
\begin{align}\label{def-u0}
	u_0(z,t):=w(z,t)+\alpha(z)X(t)
\end{align}
 and
\begin{align} \label{vv}
 \Psi_{u}(z, t):=\int_0^z k(z, y) u(y, t) \text{d} y,
\end{align}
%the equation \eqref{transformation4b} can be rewritten as
we only need to prove that there exists a unique function  $u\in C(\mathbb{R}_{\geq0};C((0,1);\mathbb{R}))$ such that the following equation
\begin{align}\label{uphi}
	u(z, t)= u_0(z,t)+\Psi_{u}(z, t)
\end{align}
 is fulfilled.

 {First,} we  prove the existence of a solution $u\in C(\mathbb{R}_{\geq0};C((0,1);\mathbb{R}))$  to the equation \eqref{uphi}.  Let
\begin{align}\label{un+1}
	u_{n+1}(z, t):=u_0(z,t)+\Psi_{u_n}(z, t),  {n\in\mathbb{N}_0}  
\end{align}
and
\begin{align*}%\label{zengliang}
	\Delta u_n(z, t):=u_{n+1}(z, t)-u_n(z, t), {n\in\mathbb{N}_0}.
\end{align*}
Then,   we get
\begin{align}\label{detalun}
\Delta u_{n+1}(z, t) =\Psi_{u_{n+1}}(z, t)-\Psi_{u_n}(z, t)=\Psi_{\Delta u_{ n}}(z, t),  {n\in\mathbb{N}_0} 
\end{align}
with
\begin{align}\label{detalu0}
	\Delta u_{0}(z, t)  =\Psi_{u_{0}}(z, t).
\end{align}
 Therefore, \eqref{un+1} can be written as
%				\begin{align*}
	%				& \Delta u_{n+1}(z, t)=u_{n+2}-u_{n+1} \\
	%			& =u_0+\Phi_{u_{ n+1}}-u_0-\Phi_{u_{n}} \\
	%			& =\Phi_{\Delta u_{ n}}(z, t) \\
	%			\end{align*}
\begin{align}\label{seriesun}
	u_{n+1}(z, t)=u_0(z, t)+\sum_{j=0}^n \Delta u_j(z, t).
\end{align}
%According to \eqref{un+1}, if the sequence $u_n(z,t)$ converges uniformly w.r.t. $(z,t)$ as $n\to\infty$, then the limit $ u(z,t):=\lim_{n\to\infty}u_n(z,t)$
	%is a solution of \eqref{uphi}. Furthermore, by \eqref{seriesun}, the convergence of the sequence $\{u_n(z,t)\}$ is equivalent to the convergence of the series $\sum_{j=0}^{\infty}\Delta u_j(z,t).$
	As  in the proof of Proposition~\ref{kernel exitence}(i) (see Appendix~\ref{solution equation}), to prove the existence of a solution to \eqref{uphi}, it suffices to show  that the series $\sum_{j=0}^n \Delta u_j(z, t)$ converges uniformly w.r.t. $(z,t)$. In particular,   we show that
\begin{align}\label{existence-un}
	\left|\Delta u_n(z, t)\right| \leq  \frac{c_0  \overline k^{n+1}}{(n+1)!}z^{n+1}\left(\sup _{z \in (0,1)}|w(z, t)|+|X(t)|\right)
\end{align}
holds true for all $n\in\mathbb{N}_0$, where $c_0:=\max\left\{1,\max_{z\in[0,1]}|\alpha(z) |\right\}$  and $\overline k:=\max _{(z,y) \in \overline{\Omega}}|k(z, y)|$.
Indeed, we prove \eqref{existence-un} by induction. Note that by the definition of $u_0$ (see \eqref{def-u0}), we get
\begin{align}\label{u0leq}
	\left|u_0(z, t)\right|=|w(z, t)+\alpha(z) X(t)|
	%				\leq&  c_0|w(z, t)| \notag\\
	\leq c_0 \left(\sup _{z \in (0,1)}|w(z, t)|+|X(t)|\right).
\end{align}
It follows that
\begin{align*}	
	\left|\Delta u_0(z, t)\right|	=\left|\Psi_{u_0}(z, t)\right| =\left|\int_0^z k(z, y) u_0(y, t) \text{d} y\right|
	%		& \leq  c_0\max _{(z,y) \in \Omega}|k(z, y)| \max _{(z, t) \in \overline{Q}_T }|w(z, t)| z \\
	\leq	 c_0 \overline k z\left(\sup _{z\in(0,1)}|w(z, t)|+|X(t)|\right), \forall t\in\mathbb{R}_{\geq 0}.
	%			& \left|\Delta u_1(z, t)\right|=\left|\dot{q}_{\Delta u_0}(z, t)\right|=\left|\int_0^z k(z, y) \Delta u_0(y, t) d y\right| \\
	%			& \leq M \int_0^z\left|C_0 M \max \right| w(z, t)|\cdot y| d y \\
	%			& =c_0 M^2 \max |w(z, t)| \cdot \frac{1}{2} z^2 \\ .
\end{align*}
Therefore, \eqref{existence-un} holds true for $n=0$.

 Assume that \eqref{existence-un}	 holds true for    $n\in\mathbb{N}$; it then need to show that \eqref{existence-un} holds true for $n+1$. Indeed, for $n+1$, by direct computations, we have
\begin{align*}
	\left|\Delta u_{n+1}(z, t)\right| =	\left|\Psi_{\Delta u_{n+1}}(z, t)\right|=\left|\int_0^z k(z, y) \Delta u_{n+1}(y, t) \text{d} y\right|
	\leq   \frac{c_0  \overline k^{n+2}}{(n+2)!}z^{n+2}\left(\sup _{z \in (0,1)}|w(z, t)|+|X(t)|\right).
\end{align*}
Therefore, {we conclude that \eqref{existence-un} holds true for all $n\in\mathbb{N}_0$.}

{Now, we prove    the  uniqueness  of the solution to \eqref{uphi}.  As in the proof of Proposition~\ref{kernel exitence}(i) (see Appendix~\ref{solution equation}), we only need to prove  that any solution $\overline u \in C(\mathbb{R}_{\geq0};C((0,1);\mathbb{R}))$  to the equation \eqref{uphi} can be approximated by the sequence ${u_n(z,t)}$ governed by \eqref{un+1}. More precisely, we need to show that}
\begin{align}\label{unique-un}
	|u_n(z,t)-\overline u(z,t)|\leq \frac{\delta_0 {\overline{k}}^n}{n!}z^n
\end{align}
holds true for all $n\in\mathbb{N}_0$, where $\delta_0:=\sup_{z\in(0,1)}|\Psi_{\overline u}(z,t)|$.
 
 We once again  employ  the method of induction to prove it.  Indeed, by the definition of $\Psi_{u}$ {(see \eqref{vv})}, we obtain
%\begin{align*}
	$|u_0(z,t)-\overline u(z,t)|\leq \delta_0$,
%\end{align*}
which shows that   \eqref{unique-un} holds true for $n=0$. Supposing that \eqref{unique-un} holds true  for  $n\in\mathbb{N}$, we deduce for  $n+1$ that
\begin{align*}
	|u_{n+1}(z,t)-\overline u(z,t)|=\left|\Psi_{u_n-\overline u}(z,t)\right|
	\leq\left|\int_0^z k(z,y)\frac{\overline k^n\delta_0}{n!}y^n\text{d}y\right|
	\leq\frac{\overline k^{n+1}\delta_0}{(n+1)!}z^{n+1}, \forall t\in\mathbb{R}_{\geq 0}.
\end{align*}
Therefore, \eqref{unique-un} holds true for all $n\in\mathbb{N}_0$. 

{We conclude that  the transformation \eqref{transformation}, which maps $(X,u)$ to $(X,w)$, is invertible in  $C (\mathbb{R}_{>0};  \mathbb{R}^N)  \times    C(\mathbb{R}_{\geq0};C((0,1);\mathbb{R}))$.}

\textbf{Step 2:} {prove that   the inverse   of transformation \eqref{transformation}}, i.e., the mapping from $(X,w)$ to $(X,u)$, is bounded in $ C (\mathbb{R}_{>0};  \mathbb{R}^N)  \times    C(\mathbb{R}_{\geq0};C((0,1);\mathbb{R}))$.
{More specifically, we show that there exists a constant $M_1>0$ such that} \begin{align}\label{x+u}
	|X(t)|+ \sup_{z\in(0,1)}|u(z,t)|\le M_1 \left(\sup_{z\in(0,1)}|w(z,t)| + |X(t)|\right),  \forall t\in\mathbb{R}_{\geq 0}.
\end{align}
Indeed, substituting \eqref{uphi} into \eqref{vv} yields the following integral equation
\begin{align}\label{v-original}
	\Psi_{u}(z,t) = \int_0^z k(z,y)u_0(y,t)\mathrm{d}y + \int_0^z k(z,y)\Psi_{u}(y,t)\mathrm{d}y.
\end{align}
According to the definitions of $u$ (see \eqref{uphi}) and $u_0$ (see \eqref{def-u0}), we obtain
\begin{align}\label{Psiuzt}
	|u(z,t)|\leq|u_0(z,t)|+|\Psi_u(z,t)|
	\leq c_0\left({\sup_{z\in(0,1)}|w(z,t)|}+|X(t)|\right)+|\Psi_u(z,t)|.
\end{align}
 {Therefore, to prove that \eqref{x+u} holds, it is   necessary to estimate $|\Psi_u(z,t)|$ in \eqref{Psiuzt}.    Furthermore, letting}
\begin{align}\label{psiu0}
	{\Psi}_{u_0}(z,t) := \int_0^z k(z,y)u_0(y,t)\mathrm{d}y
\end{align}
and
\begin{align*}
	{\Psi}_{u_n}(z,t) := \int_0^z k(z,y){\Psi}_{u_{n-1}}(y,t)\mathrm{d}y,  n \ge 1,
\end{align*}
it suffices to prove that  
\begin{align}\label{vnhold}
	|{\Psi}_{u_n}(z,t)| \le \frac{c_0{\overline k}^{n+1}}{n!} \left(\sup_{z\in(0,1)}|w(z,t)|+|X(t)|\right)z^{n}, n\in\mathbb{N}_0.
\end{align}
holds true for all   $t\in\mathbb{R}_{\geq 0}$.
 {First, when $n=0$,  by \eqref{psiu0} and \eqref{u0leq}, we obtain}
\begin{align*}
	|{\Psi}_{u_0}(z,t)| \leq
	 c_0\overline k\left(\sup_{z\in(0,1)}|w(z,t)|+|X(t)|\right),  \forall t\in\mathbb{R}_{\geq 0}.
\end{align*}
 Hence, \eqref{vnhold} holds true for $n=0$.
Assuming that \eqref{vnhold} holds true for  $n\in\mathbb{N}$,  by induction,  we need to show that \eqref{vnhold} also holds true for $n+1$. Indeed, for $n+1$, we have
%\begin{align*}
%|v_{n-1}(z,t)| \le \frac{{m_{c_0}}^{n} }{(n-1)!} %\left(\sup_{z\in(0,1)}|w(z,t)|+|X(t)|\right)z^{n-1}.
%\end{align*}
\begin{align*}
	|{\Psi}_{u_{n+1}}(z,t)| \le \overline k \int_0^z |{\Psi}_{u_{n}}(y,t)|\mathrm{d}y
	\le&  \overline k\int_0^z \frac{c_0{\overline k}^{n+1} }{n!}\left(\sup_{z\in(0,1)}|w(z,t)|+|X(t)|\right) y^{n} \mathrm{d}y\notag\\
	=& \frac{c_0{\overline k}^{n+2} }{(n+1)!}\left(\sup_{z\in(0,1)}|w(z,t)|+|X(t)|\right) z^{n+1}.
\end{align*}
  Therefore, \eqref{vnhold} holds true for all $n\in\mathbb{N}_0$. 
  
  Then, we infer that
\begin{align*}
	\sum_{n=0}^\infty  |{\Psi}_{u_n}(z,t)| \le \left(\sup_{z\in(0,1)}|w(z,t)|+|X(t)|\right)  \sum_{n=0}^\infty \frac{c_0{\overline k}^{n+1}}{n!}
	= \left(\sup_{z\in(0,1)}|w(z,t)|+|X(t)|\right) {c_0{\overline k}} e^{\overline k} < +\infty,   \forall t\in\mathbb{R}_{\geq 0}.
\end{align*}
Thus, the series  $\sum_{n=0}^\infty  |{\Psi}_{u_n}(z,t)|$ converges absolutely and uniformly w.r.t.  $z\in(0,1)$, and its sum function   ${\Psi_u}(z,t)$  satisfies
\begin{align}\label{6}
	\sup_{z\in(0,1)}|{\Psi_u}(z,t)| \le {c_0{\overline k}} e^{{\overline k}} \left(\sup_{z\in(0,1)}|w(z,t)|+|X(t)|\right), \forall t\in\mathbb{R}_{\geq 0}.
\end{align}

From \eqref{Psiuzt}, we obtain
\begin{align*}
	\sup_{z\in(0,1)}|u(z,t)|
	\le&  c_0(1+{{\overline k}} e^{{\overline k}}) \left(\sup_{z\in(0,1)}|w(z,t)|+|X(t)|\right), \forall t\in\mathbb{R}_{\geq 0},
\end{align*}
and hence
\begin{align*}
	|X(t)|+ \sup_{z\in(0,1)}|u(z,t)|\le M_1 \left(\sup_{z\in(0,1)}|w(z,t)| + |X(t)|\right),  \forall t\in\mathbb{R}_{\geq 0},
\end{align*}
where $M_1:= c_0(1+{{\overline k}} e^{{\overline k}})+1$  is a constant independent of $t$. {Therefore,}  the inverse of transformation  \eqref{transformation} is bounded in  $C (\mathbb{R}_{>0};  \mathbb{R}^N)  \times    C(\mathbb{R}_{\geq0};C((0,1);\mathbb{R}))$.

%\section{References}
\bibliographystyle{abbrv}
\bibliography{reference}
\end{document}